\newtheorem{definition}{Definition}[section]
\newtheorem{theorem}[definition]{Theorem}
\newtheorem{proposition}[definition]{Proposition}
\newtheorem{lemma}[definition]{Lemma}
\newtheorem{remark}[definition]{Remark}
\title{Uniformization of complex projective klt varieties by bounded symmetric domains}
\author{Aryaman Patel }
\date{}
\begin{document}

\maketitle

\begin{abstract}
    Using classical results from Hodge theory and more contemporary ones valid for complex projective varieties with Kawamata log terminal (klt) singularities, we deduce necessary and sufficient conditions for such varieties to be uniformized by each of the four classical irreducible Hermitian symmetric spaces of non compact type. We also deduce necessary and sufficient conditions for uniformization by a polydisk in the klt setting, which generalizes a classical result of Simpson. 
\end{abstract}

\tableofcontents

\vspace{0.5cm}

{\noindent \small \textit{Keywords}: Uniformization, klt singularities, Higgs bundles, Hermitian symmetric sapces, stability, principal
bundles, systems of Hodge bundles.\\
\textit{Mathematics subject classification} (2020): 14D07, 14E20, 14E30, 32M05, 32Q30.}

\section{Introduction}


The problem of finding necessary and sufficient conditions for a complex projective variety to have a Hermitian symmetric space of non-compact type (also realized as a bounded symmetric domain) as its universal cover has now been studied extensively and for a long time. For example, in \cite{yau}, Yau showed that surfaces of general type which satisfy equality in the Miyaoka-Yau inequality are uniformized by the unit ball $\mathbb{B}^2\subset\mathbb{C}^2$. This result has since been generalized to quasi projective varieties (see \cite{yadeng}), and varieties with Kawamata log terminal (klt) singularities (see \cite{gkpt,gkpt2}). In \cite{gkpt}, the authors formulated an orbifold version of the Miyaoka-Yau inequality in terms of orbifold Chern classes (also called $\mathbb{Q}$-Chern classes), called the $\mathbb{Q}$-Miyaoka-Yau inequality, which is satisfied by varieties of general type with klt singularities. They showed that if a complex projective variety of general type with klt singularities and nef canonical divisor satisfies equality in the $\mathbb{Q}$-Miyaoka-Yau inequality, then the associated canonical model is the quotient of the unit ball by the action of a discrete group.\\
In \cite{simp}, Simpson formulated necessary and sufficient conditions for a smooth complex projective variety to have an arbitrary Hermitian symmetric space of non-compact type as its universal cover. These conditions require the existence of a uniformizing variation of Hodge structure (see Definition \ref{def8}) on the variety. A sufficient condition for uniformization by the polydisk, and the classical result of uniformization by the unit ball are some consequences of Simpson's results. In this note our goal is to generalize the main results of \cite{simp} to the klt setting, and as a consequence derive necessary and sufficient conditions for a complex projective klt variety with ample canonical divisor to be uniformized by one of the classical Hermitian symmetric spaces of non compact type. \\
In \cite[Corollary 9.7]{simp}, Simpson proved that if $X$ is a smooth complex projective variety of dimension $n$ whose tangent bundle $\mathcal{T}_X$ splits as a direct sum of line bundles of negative degree, and if the Chern classes of $X$ satisfy the equality $[c_1(X)^2-2c_2(X)]\cdot[K_X]^{n-2}=0$, then the universal cover of $X$ is the polydisk $\mathbb{H}^n$. This result was slightly improved by Beauville (see \cite[Theorem B]{bea}), who showed that the assumption on the Chern classes of $X$ is not necessary. Simpson's result of uniformization by the complex ball (see \cite[Corollary 9.8]{simp}) has been recently generalized to the klt setting (see \cite[Theorem 1.2]{gkpt} and \cite[Theorem 1.5]{gkpt2}). We work in a similar setting and prove an analogous result for uniformization by the polydisk. Our result is a slight generalization of Beauville's, in that we only assume that the tangent bundle $\mathcal{T}_{X_{reg}}$ of the smooth locus $X_{reg}$ of $X$ splits as a direct sum of line bundles (see Theorem \ref{thm1}). Due to the semistability of the tangent sheaf $\mathcal{T}_X$ with respect to the canonical divisor $K_X$, which was shown in \cite{gkpt}, we do not need to assume that the line bundles appearing in the decomposition of $\mathcal{T}_{X_{reg}}$ have negative degree. Our proof uses classical techniques developed by Simpson in \cite{simp}, and those developed by the authors in \cite{gkpt} and \cite{gkpt2}. Although the splitting of the tangent sheaf $\mathcal{T}_X$ is not a necessary condition for $X$ to be a polydisk quotient, it does hold on a finite, quasi-\'{e}tale cover of $X$.\\
We formulate the following analog of Simpson's result of uniformization by Hermitian symmetric spaces of non compact type, for a complex projective variety with klt singularities.
\begin{theorem}\label{genthm}
Let $X$ be a complex projective klt variety with ample canonical divisor $K_X$. Let $\mathcal{D}$ be a Hermitian symmetric space of non-compact type. Then $X\cong\mathcal{D}/\Gamma$, where $\Gamma$ is a discrete cocompact subgroup of $\textrm{Aut}(\mathcal{D})$, whose action on $\mathcal{D}$ is fixed point free in codimension one, if and only if:
\begin{enumerate}
    \item The smooth locus $X_{reg}$ admits a uniformizing system of Hodge bundles $(P,\theta)$ for any Hodge group $G_0$ of which $Aut(\mathcal{D})$ is a quotient by a discrete central subgroup, and
    \item $X$ satisfies the $\mathbb{Q}$-Chern class equality $\widehat{ch}_2(\mathcal{E}')\cdot[K_X]^{n-2}=0$, where $\mathcal{E}'$ is the reflexive extension of the system of Hodge bundles $P\times_K\mathfrak{g}$ to $X$.
\end{enumerate}
\end{theorem}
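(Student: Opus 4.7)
The plan is to adapt Simpson's proof from \cite{simp} to the klt setting using the orbifold Chern class machinery and semistability results of \cite{gkpt,gkpt2}. The necessity direction should be essentially tautological: given $X \cong \mathcal{D}/\Gamma$, one pulls back the canonical variation of Hodge structure on $\mathcal{D}$ (associated to the tautological representation of the Hodge group $G_0$) to $X_{reg}$, which descends because $\Gamma$ acts freely in codimension one, and yields the uniformizing system of Hodge bundles $(P,\theta)$. Since the underlying Higgs bundle is flat (coming from a representation into $G_0$), its second orbifold Chern character vanishes as a numerical class, giving the required intersection equality.

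For sufficiency, the given uniformizing system of Hodge bundles $(P,\theta)$ on $X_{reg}$ should induce, via pushforward, a reflexive system of Hodge bundles $(\mathcal{E}',\theta')$ on $X$. The uniformizing property ensures that the Higgs field $\theta$ identifies $\mathcal{T}_{X_{reg}}$ with a summand in the appropriate graded piece of the adjoint bundle, so the $K_X$-semistability of $\mathcal{T}_X$ established in \cite{gkpt} should propagate to $K_X$-semistability of $\mathcal{E}'$ as a Higgs sheaf. This is the input needed to invoke the relevant Bogomolov-Gieseker type inequality.

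With semistability in place, the plan is to apply the $\mathbb{Q}$-Bogomolov-Gieseker inequality for reflexive Higgs sheaves on klt varieties (as developed in \cite{gkpt,gkpt2}), which gives $\widehat{ch}_2(\mathcal{E}') \cdot [K_X]^{n-2} \leq 0$; the hypothesis then forces equality. Via the klt version of the Kobayashi-Hitchin / Simpson correspondence from \cite{gkpt,gkpt2}, this vanishing discriminant should trigger flatness: after passing to a suitable quasi-\'etale cover $Y \to X$ where the sheaves become locally free, the Higgs bundle should arise from a representation $\rho: \pi_1^{\mathrm{orb}}(X) \to G_0$. The uniformizing property of $(P,\theta)$ then guarantees that the associated period map is a local biholomorphism into $\mathcal{D}$, producing a $\rho$-equivariant developing map $\widetilde{X_{reg}} \to \mathcal{D}$.

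The main obstacle I expect is the final step: promoting this local biholomorphism to a genuine isomorphism $X \cong \mathcal{D}/\Gamma$ with $\Gamma = \rho(\pi_1^{\mathrm{orb}}(X))$ a cocompact lattice acting freely in codimension one. Here the ampleness of $K_X$ is essential, since it forces $X$ to be projective and of general type, so that hyperbolicity of $\mathcal{D}$ and properness of $X$ should combine (as in \cite[Section 9]{simp}) to show the developing map is a covering map onto $\mathcal{D}$. The delicate point is controlling the behavior across the singular locus: one needs to argue that the representation $\rho$ factors through the topological fundamental group of a resolution (or appropriate orbifold cover) and that the klt hypothesis, combined with the codimension-one freeness, ensures the quotient $\mathcal{D}/\Gamma$ is precisely $X$ rather than some finite cover of it. This is where the techniques of \cite{gkpt,gkpt2} for descending flat structures from quasi-\'etale covers to singular bases will do the heavy lifting.
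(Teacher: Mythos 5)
Your overall architecture tracks the paper's proof for a good stretch: necessity by descending the homogeneous structure from $\mathcal{D}$ and transferring the Chern class vanishing via quasi-\'etale covers (the paper does this through an intermediate smooth quotient $Y=\mathcal{D}/\widehat{\Gamma}$ obtained from Selberg's lemma, applying Theorem \ref{simpthm} on $Y$ and then \cite[Prop.~3.17]{gkpt}); and sufficiency by combining klt semistability of $\mathcal{T}_X$ with the nonabelian Hodge theory of \cite{gkpt2} on a quasi-\'etale cover. Two caveats on the middle of your sufficiency argument: the step ``semistability of $\mathcal{T}_X$ propagates to the adjoint Higgs sheaf'' is not automatic --- in the paper it is the content of Lemma \ref{lem00} and Proposition \ref{propstab}, which need the surjectivity of the bracket $\mathfrak{g}^{-1,1}\otimes\mathfrak{g}^{1,-1}\to\mathfrak{g}^{0,0}$ to control the weight-zero summand, Simpson's rank inequality \cite[Cor.~9.3]{simp} to upgrade semistability to (poly)stability of $P\times_K\mathfrak{g}$, the equivalence of stability for systems of Hodge sheaves and for the underlying Higgs sheaves (Lemma \ref{lemsta}), and a separate argument when the complexified group is disconnected. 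Also your Bogomolov--Gieseker step is idle here: the equality $\widehat{ch}_2(\mathcal{E}')\cdot[K_X]^{n-2}=0$ is the hypothesis, not something to be forced.

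The genuine gap is the endgame, which you yourself flag as the ``main obstacle'' but do not resolve. Running Simpson's Section 9 argument on a $\rho$-equivariant developing map $\widetilde{X_{reg}}\to\mathcal{D}$ does not work as stated: Simpson's passage from ``local biholomorphism'' to ``covering map onto $\mathcal{D}$'' uses compactness of the smooth base (completeness of the harmonic/Hodge metric), whereas $X_{reg}$ is non-compact and the metric need not be complete near $X_{sing}$; no orbifold substitute is supplied, and hoping that \cite{gkpt,gkpt2} ``do the heavy lifting'' is exactly the missing idea. The paper avoids constructing any developing map over the singular base: it takes a Galois \emph{maximally} quasi-\'etale cover $\gamma:Y\to X$, extends the pulled-back principal bundle over $Y_{reg}$ by \cite{biswas}, uses Proposition \ref{propstab} together with $\widehat{ch}_2\cdot[K_Y]^{n-2}=0$ and \cite[Thm.~5.1, Prop.~3.17]{gkpt2} to conclude that the reflexive extension of $P'\times_K\mathfrak{g}$ is locally free on $Y$; since $\mathcal{T}_Y$ is a direct summand it is locally free, and the Lipman--Zariski theorem for klt spaces \cite[Thm.~16.1]{gkkp} then shows $Y$ is \emph{smooth}. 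Only now is the compact-case Theorem \ref{simpthm} invoked, verbatim, on the smooth projective $Y$, and the quotient presentation $X\cong\mathcal{D}/\Gamma$ with $\Gamma$ acting fixed point freely in codimension one follows from the extension $1\to\Gamma'\to\Gamma\to H\to 1$ as in the proof of \cite[Thm.~1.3]{gkpt}. Without this smoothness mechanism your proposal has no legitimate way to apply the compact uniformization theorem, so as written it does not close.
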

The notation appearing in Theorem \ref{genthm} will be made clear in the subsequent sections. As a consequence of Theorem \ref{genthm}, we derive necessary and sufficient conditions for a projective klt variety $X$ with ample canonical divisor to be uniformized by each of the four classical Hermitian symmetric spaces of noncompact type. In each case the conditions consist of the tangent bundle of the smooth locus $\mathcal{T}_{X_{reg}}$ admitting a reduction in structure group, and $X$ satisfying a $\mathbb{Q}$-Chern class equality. For example, the statement for uniformization by the Hermitian symmetric space $\mathcal{H}_n$ of type $CI$ (also known as the Siegel upper half space), is as follows.
\begin{theorem}\label{equivthm}
Let $X$ be a projective klt variety of dimension $n(n+1)/2$ such that the canonical divisor $K_X$ is ample. Then $X\cong\mathcal{H}_n/\Gamma$, where $\Gamma$ is a discrete cocompact subgroup of $\textrm{Aut}(\mathcal{H}_n)=PSp(2n,\mathbb{R})$, whose action on $\mathcal{D}$ is fixed point free in codimension one, if and only if $X$ satisfies
\begin{itemize}
    \item $\mathcal{T}_{X_{reg}}\cong\textrm{Sym}^2(\mathcal{E})$
    \item $[2\widehat{c}_2(X)-\widehat{c}_1(X)^2+2n\widehat{c}_2(\mathcal{E}')-(n-1)\widehat{c}_1(\mathcal{E}')^2]\cdot[K_X]^{n-2}=0$,
\end{itemize}
where $\mathcal{E}$ is a vector bundle of rank $n$ on $X_{reg}$, and $\mathcal{E}'$ denotes the reflexive extension of $\mathcal{E}$ to $X$.
\end{theorem}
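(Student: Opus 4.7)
The plan is to deduce Theorem \ref{equivthm} as the specialization of Theorem \ref{genthm} to $\mathcal{D} = \mathcal{H}_n$. The relevant Hodge group is $G_0 = Sp(2n,\mathbb{R})$, with maximal compact $K = U(n)$ and complexification $K_{\mathbb{C}} = GL(n,\mathbb{C})$. Writing $V = \mathbb{C}^n$ for the standard representation, the holomorphic tangent space of $\mathcal{H}_n$ at the base point is $\textrm{Sym}^2(V)$, and the whole adjoint representation decomposes under $K_{\mathbb{C}}$ as
\[
\mathfrak{sp}(2n,\mathbb{C}) = \textrm{Sym}^2(V) \oplus \textrm{End}(V) \oplus \textrm{Sym}^2(V^*),
\]
the three summands being the $(1,-1)$, $(0,0)$, $(-1,1)$ Hodge pieces of the adjoint VHS. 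A system of Hodge bundles for $G_0$ on $X_{reg}$ thus amounts to a principal $K_{\mathbb{C}}$-bundle $P$, encoded via its standard associated rank-$n$ bundle $\mathcal{E}$, together with a Higgs field $\theta: \mathcal{T}_{X_{reg}} \to \textrm{Sym}^2(\mathcal{E})$; the system is uniformizing in the sense of Definition \ref{def8} exactly when $\theta$ is an isomorphism. Condition (1) of Theorem \ref{genthm} therefore becomes precisely the first bullet of Theorem \ref{equivthm}.

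The ``only if'' side is direct: from $X \cong \mathcal{H}_n/\Gamma$, the isotropy $K_{\mathbb{C}}$-bundle on $\mathcal{H}_n$ descends to $X_{reg}$, and because the holomorphic tangent bundle of any Hermitian symmetric space is the associated bundle of the isotropy representation, one gets $\mathcal{T}_{X_{reg}} \cong \textrm{Sym}^2(\mathcal{E})$ for $\mathcal{E} := P \times_{K_{\mathbb{C}}} V$. For the ``if'' side I would take $P$ to be the frame bundle of $\mathcal{E}$, view $\textrm{Sym}^2(\mathcal{E})$ as the $(1,-1)$ summand of $P \times_{K_{\mathbb{C}}} \mathfrak{sp}(2n,\mathbb{C})$, and define $\theta$ by composing this inclusion with the hypothesized isomorphism $\mathcal{T}_{X_{reg}} \to \textrm{Sym}^2(\mathcal{E})$; the integrability $\theta \wedge \theta = 0$ is automatic because $\textrm{Sym}^2(V) \subset \mathfrak{sp}(2n,\mathbb{C})$ is an abelian subalgebra, and $\theta$ is an isomorphism by hypothesis, so $(P,\theta)$ is uniformizing.

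It remains to match the two Chern class conditions. Using the decomposition of $\mathfrak{sp}(2n,\mathbb{C})$ together with the standard identities $ch_2(\textrm{End}\, \mathcal{F}) = (r-1)c_1(\mathcal{F})^2 - 2r\,c_2(\mathcal{F})$ and $ch_2(\textrm{Sym}^2 \mathcal{F}) = \tfrac{r+3}{2}c_1(\mathcal{F})^2 - (r+2)c_2(\mathcal{F})$ for a rank-$r$ bundle $\mathcal{F}$, one computes on $X_{reg}$ that
\[
ch_2\bigl(P \times_{K_{\mathbb{C}}} \mathfrak{sp}(2n,\mathbb{C})\bigr) = 2(n+1)\bigl[c_1(\mathcal{E})^2 - 2c_2(\mathcal{E})\bigr].
\]
Passing to reflexive extensions and using $\mathcal{T}_{X_{reg}} \cong \textrm{Sym}^2(\mathcal{E})$ — which gives $\widehat{c}_1(X) = (n+1)\widehat{c}_1(\mathcal{E}')$ and $\widehat{c}_1(X)^2 - 2\widehat{c}_2(X) = (n+3)\widehat{c}_1(\mathcal{E}')^2 - 2(n+2)\widehat{c}_2(\mathcal{E}')$ via the same symmetric-square identity — a short rearrangement shows that the Theorem \ref{genthm} equality $\widehat{ch}_2(P \times_K \mathfrak{g}) \cdot [K_X]^{n-2} = 0$ is a nonzero scalar multiple of the Chern class equality stated in Theorem \ref{equivthm}. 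The main obstacle is not this bookkeeping but the verification, in the ``if'' direction, that the Higgs pair built on $X_{reg}$ extends across the singular locus of $X$ to a reflexive $\mathbb{Q}$-system of Hodge bundles satisfying the uniformization criterion on the klt variety $X$; this is where the extension and polystability results of \cite{gkpt, gkpt2}, together with Simpson's classical VHS machinery from \cite{simp}, do the real work and make Theorem \ref{genthm} applicable.
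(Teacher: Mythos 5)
Your proposal is correct and follows essentially the same route as the paper: Theorem \ref{equivthm} is obtained by specializing Theorem \ref{genthm} to $G_0=Sp(2n,\mathbb{R})$, taking $P$ to be the frame bundle of $\mathcal{E}$, identifying $\mathrm{Sym}^2(V)$ with the relevant Hodge piece of $\mathfrak{sp}(2n,\mathbb{C})$ as a $GL(n,\mathbb{C})$-representation, and translating $\widehat{ch}_2$ of the adjoint bundle into the stated $\mathbb{Q}$-Chern class equality (your $ch_2$ bookkeeping agrees with the paper's direct $\widehat{c}_2$ computation of $\mathcal{T}_X\oplus\mathcal{E}nd(\mathcal{E}')\oplus\Omega^{[1]}_X$, and the necessity of the conditions holds because $\mathrm{Aut}(\mathcal{H}_n)=PSp(2n,\mathbb{R})$ is connected, exactly as in Propositions \ref{prop6} and \ref{propn}). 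The only blemish is the labeling of $\mathrm{Sym}^2(V)$ as the $(1,-1)$ rather than the $(-1,1)$ piece, which is a harmless convention slip relative to Definition \ref{def8}.
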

Theorem \ref{equivthm} is a consequence of combining Propositions \ref{prop6} and \ref{propn} in Section 6. Analogous results to Theorem \ref{equivthm} for Hermitian symmetric spaces of types $DIII$, $BDI$, and $AIII$ for $p\neq q$ are Theorems \ref{equivthmb}, \ref{equivthmd}, and \ref{equivthmc} in Sections 7, 8, and 9 respectively. Necessary and sufficient conditions for uniformization by the polydisc and Hermitian symmetric space of type $AIII$ for $p=q$ are formulated separately in Sections 5 and 9 respectively.

\section*{Acknowledgements}

I would like to thank my advisor Prof. Daniel Greb firstly for introducing me to the problem and suggesting it as the topic of my thesis, and secondly for the many insightful discussions leading up to this article. I would also like to thank my co-advisor Prof. Ulrich G\"{o}rtz, and Prof. Jochen Heinloth for many discussions that were crucial to proving our results. I am grateful to Matteo Costantini for his detailed comments and feedback on the article, which significantly improved it. I am also grateful to Adrian Langer for his helpful comments on the first version of this article, and for pointing me to his paper from which I learned a lot. Finally, I would like to thank the RTG 2553 of ESAGA at the University of Duisburg-Essen for financial support throughout my PhD.

\section{Preliminaries}

In this section we introduce objects and notation that will be used to formulate our main results. We follow the conventions of \cite[Section 2]{gkpt}, and of \cite{simp}.

\subsection{Higgs sheaves and stability}

A closed subset $Z$ of a normal, quasi projective variety is called \emph{small} if the codimension of $Z$ in $X$ is at least two. An open subset $U$ of $X$ is called \emph{big} if the complement $X\setminus U$ is small. 

\begin{definition}[Quasi \'{e}tale morphism]
A morphism $f:X\to Y$ between normal varieties is called \emph{quasi \'{e}tale} if $f$ is of relative dimension zero, and \'{e}tale in codimension one. In other words, $f$ is quasi \'{e}tale if $\textrm{dim}X=\textrm{dim}Y$ and if there exists a subset $Z\subset X$ of codimension at least two such that the restricted map $f|_{X\setminus Z}:X\setminus Z\to Y$ is \'{e}tale.
\end{definition}

\begin{definition}[Sheaf with an operator, {\cite[Definition 4.1]{gkpt}}]\label{def0}
Let $X$ be a normal, quasi-projective variety, and $\mathcal{W}$ a coherent sheaf of $\mathcal{O}_X$-modules. A \emph{sheaf with a $\mathcal{W}$-valued operator} is a pair $(\mathcal{E},\theta)$, where $\mathcal{E}$ is a coherent sheaf on $X$ and $\theta:\mathcal{E}\to\mathcal{E}\otimes\mathcal{W}$ is an $\mathcal{O}_X$-linear sheaf morphism.
\end{definition}

\begin{definition}[Higgs sheaves, {\cite[Definition 5.1]{gkpt}}]\label{def1}
Let $X$ be a normal variety. A \emph{Higgs sheaf} is a pair $(\mathcal{E},\theta)$, where $\mathcal{E}$ is a coherent sheaf of $\mathcal{O}_X$-modules, and $\theta:\mathcal{E}\to\mathcal{E}\otimes\Omega^{[1]}_X$ is an $\Omega^{[1]}_X$-valued operator called the \emph{Higgs field}, such that the composed morphism
\begin{align*}
\mathcal{E}\xrightarrow{\theta}\mathcal{E}\otimes\Omega^{[1]}_X\xrightarrow{\theta\otimes \textrm{Id}}\mathcal{E}\otimes\Omega^{[1]}_X\otimes\Omega^{[1]}_X\xrightarrow{\textrm{Id}\otimes[\wedge]}\mathcal{E}\otimes\Omega^{[2]}_X
\end{align*}
is zero. The composed morphism is usually denoted $\theta\wedge\theta$.
\end{definition}

A Higgs sheaf is hence an instance of a sheaf with an $\Omega^{[1]}_X$-valued operator. Note that we use $\Omega^{[1]}_X$ instead of $\Omega^1_X$ because this turns out to be the correct notion when $X$ has klt singularities, as explained in \cite{gkpt}.\\
\\
We remark that a more general definition of Higgs sheaves on normal varieties was introduced in \cite[Section 4]{lan}. The advantage of this definition is that one can define define duals of Higgs sheaves, and also extend reflexive Higgs sheaves on the smooth locus of a klt variety to reflexive Higgs sheaves on the whole variety. This does not seem immediately possible with Higgs sheaves in the sense of Definition \ref{def1}. However, in this article we use Definition \ref{def1} because the proof of the main theorem relies on some results of \cite{gkpt2}, where this definition is used, and we work in the same generality as \cite{gkpt2}. \\
The constructions in \cite[Section 4]{lan} are however of independent interest, and we point the interested reader to the article \cite{lan} for details.

\begin{definition}[Morphism of Higgs Sheaves, {\cite[Definition 5.2]{gkpt}}]\label{def2}
In the setting of Definition \ref{def1}, a \emph{morphism of Higgs sheaves} $f:(\mathcal{E}_1,\theta_1)\to(\mathcal{E}_2,\theta_2)$ is a morphism $f:\mathcal{E}_1\to\mathcal{E}_2$ of sheaves which is compatible with the Higgs fields, i.e., $(f\otimes \textrm{Id}_{\Omega^{[1]}_X})\circ\theta_1=\theta_2\circ f$.
\end{definition}

An important notion that will be used later in this note is that of a Higgs subsheaf. To make a definition we must first define an invariant subsheaf of a sheaf with an operator.

\begin{definition}[Invariant subsheaf, {\cite[Definition 4.8]{gkpt}}]\label{def3}
Let $X$ be a normal quasi-projective variety and $(\mathcal{E},\theta)$ a sheaf with a $\mathcal{W}$-valued operator, as in Definition \ref{def0}. A coherent subsheaf $\mathcal{F}\subset\mathcal{E}$ is called \emph{$\theta$-invariant} if the map $\theta:\mathcal{F}\to\mathcal{E}\otimes\mathcal{W}$ factors through $\mathcal{F}\otimes\mathcal{W}$. We call $\mathcal{F}$ \emph{generically $\theta$-invariant} if the restriction $\mathcal{F}|_U$ is invariant with respect to $\theta|_U$, where $U\subset X$ is the maximal dense open subset of $X$ where $\mathcal{W}$ is locally free. 
\end{definition}

Let $(\mathcal{E},\theta)$ be a Higgs sheaf on $X$, as in Definition \ref{def1}. A coherent subsheaf $\mathcal{F}\subset\mathcal{E}$ is a \emph{Higgs subsheaf} of $\mathcal{E}$ if $\mathcal{F}$ is generically $\theta$-invariant, and if $(\mathcal{F},\theta|_{\mathcal{F}})$ is a Higgs sheaf in the sense of Definition \ref{def1}.\\
Following \cite{gkpt}, on a normal, quasi projective variety $X$, we denote by $N^1(X)_{\mathbb{Q}}$ the $\mathbb{Q}$-vector space of numerical Cartier divisor classes. For any sheaf $\mathcal{F}$ on $X$ whose determinant is $\mathbb{Q}$-Cartier, we denote the corresponding element of $N^1(X)_{\mathbb{Q}}$ by $[\mathcal{F}]=[\textrm{det}\mathcal{F}]$. Using \cite[Construction 2.17]{gkpt}, we can define the degree of a Weil divisorial sheaf $\mathcal{F}$ on a normal projective variety $X$ with respect to a system of $n-1$ line bundles, where $n=\textrm{dim}(X)$. 

\begin{definition}[Slope with respect to a nef divisor]
Let $X$ be a normal projective variety of dimension $n$, and let $H$ be a nef, $\mathbb{Q}$-Cartier divisor on $X$. If $\mathcal{F}$ is a torsion free, coherent sheaf on $X$, the \emph{slope} of $\mathcal{F}$ with respect to $H$ is given by 
\begin{align*}
    \mu_H(\mathcal{F})=\frac{[\mathcal{F}]\cdot[H]^{n-1}}{\textrm{rank}(\mathcal{F})}.
\end{align*}
\end{definition}

We call $\mathcal{F}$ \emph{semistable} with respect to $H$ if for any subsheaf $\mathcal{E}\subset\mathcal{F}$ with $0<\textrm{rank}(\mathcal{E})<\textrm{rank}(\mathcal{F})$, we have $\mu_H(\mathcal{E})\le\mu_H(\mathcal{F})$. We call $\mathcal{F}$ \emph{stable} with respect to $H$ if the strict inequality holds.

\begin{definition}[Stability of sheaves with operator, \cite{gkpt}, Definition 4.13]
Let $X$ be a normal, projective variety and $H$ be any nef, $\mathbb{Q}$-Cartier divisor on $X$. Let $(\mathcal{E},\theta)$ be a sheaf with an operator, as in Definition \ref{def0}, where $\mathcal{E}$ is torsion free. We say $(\mathcal{E},\theta)$ is \emph{semistable} with respect to $H$ if the inequality $\mu_H(\mathcal{F})\le\mu_H(\mathcal{E})$ holds for all generically $\theta$-invariant subsheaves $\mathcal{F}$ of $\mathcal{E}$ with $0<rank(\mathcal{F})<rank(\mathcal{E})$. The pair $(\mathcal{E},\theta)$ is called \emph{stable} with respect to $H$ if the strict inequality $\mu_H(\mathcal{F})<\mu_H(\mathcal{E})$ holds. Direct sums of stable sheaves with operator are called \emph{polystable}.
\end{definition}

A Higgs sheaf is \emph{stable} (resp. \emph{semistable}, \emph{polystable}) if it is stable (resp. semistable, polystable) as a sheaf with an $\Omega^{[1]}_X$-valued operator.\\
\\
For situations that arise later in the article, we would like to use a more general notion of stability, which works for sheaves over the smooth locus of a normal projective variety. This is developed in \cite[Section 2]{naht}. For convenience, we state the following useful definitions which appear therein. 

\begin{definition}[Slope for sheaves on the smooth locus]\label{slreg} Let $X$ be a normal, projective variety of dimension $n$, and let $\mathcal{E}$ be a torsion free, coherent sheaf of rank $r$ on the smooth locus $X_{reg}$. If $H$ is a nef, $\mathbb{Q}$-Cartier divisor on $X$, define the slope of $\mathcal{E}$ with respect to $H$ as 
\begin{align*}
    \mu_H(\mathcal{E})=\frac{c_1(j_*\mathcal{E})\cdot[H]^{n-1}}{r}
\end{align*}
    where $j:X_{reg}\to X$ is the inclusion.
\end{definition}

Note that the sheaf $\mathcal{E}$ is assumed to algebraic. For coherent analytic sheaves $\mathcal{F}$ on the analytic space associated to $X_{reg}$, the pushforward $j_*\mathcal{F}$ is in general not analytic, as remarked in \cite[Remark 2.24]{naht}. We will view $X_{reg}$ as a quasi projective variety over $\mathbb{C}$, hence coherent sheaves on $X_{reg}$ will always be algebraic.\\
\\
As earlier, we call $\mathcal{E}$ \emph{semistable} with respect to $H$ if for any subsheaf $\mathcal{F}\subset\mathcal{E}$ on $X_{reg}$ with $0<\textrm{rank}(\mathcal{F})<\textrm{rank}(\mathcal{E})$, we have 
$\mu_H(\mathcal{F})\le\mu_H(\mathcal{E})$. Call $\mathcal{E}$ \emph{stable} with respect to $H$ if the strict equality holds.\\
These notions can be extended to sheaves with operator on $X_{reg}$ as follows.

\begin{definition}[Stability for sheaves on the smooth locus]\label{streg}
    Let the setting be as in Definition \ref{slreg}. Let $\mathcal{W}$ be a coherent sheaf on $X_{reg}$, and let $\theta:\mathcal{E}\to\mathcal{E}\otimes\mathcal{W}$ be a $\mathcal{W}$-valued operator. We say that $(\mathcal{E},\theta)$ is \emph{semistable} with respect to $H$ if the inequality $\mu_H(\mathcal{F})\le\mu_H(\mathcal{E})$ holds for all generically $\theta$-invariant subsheaves $\mathcal{F}$ of $\mathcal{E}$ with $0<rank(\mathcal{F})<rank(\mathcal{E})$. Call $(\mathcal{E},\theta)$ \emph{stable} with respect to $H$ if the strict inequality $\mu_H(\mathcal{F})<\mu_H(\mathcal{E})$ holds. Direct sums of stable sheaves with operator are called \emph{polystable}.
\end{definition}

We can analogously define notions of stability, semistabilty, and polystability for Higgs sheaves over the smooth locus of a normal variety.
Lemmas 2.26 and 2.27 in \cite{naht} give useful properties of the above more general notions of slope and stability. 

\subsection{Uniformization}

The notions of a system of Hodge bundles, principle bundles, and uniformising bundles play a central role in the proof of the main theorem. We state the definitions here as they are in Simpson's paper \cite{simp}. The variety $X$ is assumed to be smooth through Definitions \ref{def4}-\ref{def8}.

\begin{definition}[System of Hodge bundles]\label{def4}
A \emph{system of Hodge bundles} on $X$ is a direct sum of holomorphic vector bundles $E=\bigoplus_{p,q}E^{p,q}$ together with maps $\theta:E^{p,q}\to E^{p-1,q+1}\otimes\Omega^1_X$, such that the composed map $\theta\wedge\theta:E\to E\otimes\Omega^2_X$ is zero. In particular, a system of Hodge bundles on $X$ is a Higgs bundle equipped with an action of the group $A=U(1)\times U(1)$. 
\end{definition}

Let $E$ be a system of Hodge sheaves. A subsystem of Hodge sheaves of $E$ is a sub-Higgs sheaf of $E$ which is preserved by the action of the group $A$.

\begin{definition}[Complexification of a Lie group]\label{wcom}
A \emph{complexification} of a real Lie group $G$ is a complex Lie group $G_{\mathbb{C}}$ containing $G$ as a real Lie subgroup such that the Lie algebra $\mathfrak{g}_{\mathbb{C}}$ of $G_{\mathbb{C}}$ is a complexification of the Lie algebra $\mathfrak{g}$ of $G$ i.e., $\mathfrak{g}_{\mathbb{C}}=\mathfrak{g}\otimes_{\mathbb{R}}\mathbb{C}$. The group $G$ is then a real form of the group $G_{\mathbb{C}}$.
\end{definition}

Not every real Lie group $G$ admits a complexification in the above sense. For example, the universal cover of $SL(2,\mathbb{R})$ does not admit such a complexification. In fact, $G$ admits such a complexification if and only if $G$ is a linear group. If a complexification exists, it is not necessarily unique. Compact Lie groups always admit complexifications (see for example \cite{bour} for details). \\
A more general and satisfying notion of complexification is the following.

\begin{definition}[Universal complexification]\label{ucom}
The \emph{universal complexification} of a real Lie group $G$ is a complex Lie group $G_{\mathbb{C}}$ together with a continuous homomorphism $\varphi:G\to G_{\mathbb{C}}$ such that, for any continuous homomorphism $f:G\to H$ to a complex Lie group $H$, there is a unique complex analytic homomorphism $g:G_{\mathbb{C}}\to H$ such that $f=g\circ\varphi$. 
\end{definition}

Universal complexifications of Lie groups always exist and are unique up to unique isomorphism. If the group $G$ is linear then so is the universal complexification $G_{\mathbb{C}}$ and the homomorphism $\varphi:G\to G_{\mathbb{C}}$ is an inclusion. If the Lie group $G$ has identity component $G^o$, and component group $\Gamma=G/G^o$, then the extension 
\begin{align*}
    1\to G^o\to G\to\Gamma\to1
\end{align*}
induces an extension 
\begin{align*}
    1\to G^o_{\mathbb{C}}\to G_{\mathbb{C}}\to\Gamma\to1,
\end{align*}
where $G_\mathbb{C}$ is the universal complexification of $G$. In particular, $G_{\mathbb{C}}$ is not connected if $G$ is not connected.\\
Universal complexifications are often difficult to compute in practice, but for us it suffices to work with complexifications in the sense of Definition \ref{wcom}. Since the Lie groups we work with are linear, such complexifications exist.

\begin{definition}[Hodge group]\label{def5}
A \emph{Hodge group} is a semisimple real algebraic Lie group $G_0$, together with a Hodge decomposition of the complexified Lie algebra 
\begin{align*}
    \mathfrak{g}=\bigoplus_p\mathfrak{g}^{p,-p}
\end{align*}
such that $[\mathfrak{g}^{p,-p},\mathfrak{g}^{r,-r}]\subset\mathfrak{g}^{p+r,-p-r}$ and such that $(-1)^{p+1}\textrm{Tr}(ad(U)\circ ad(\Bar{V}))>0$ for $U,V\in\mathfrak{g}^{p,-p}\setminus\{0\}$. Here $ad:\mathfrak{g}\to\textrm{Der}(\mathfrak{g})$ denotes the adjoint representation of the Lie algebra $\mathfrak{g}$.
\end{definition}

Let $G_0$ be a Hodge group as in the above definition, and let $K_0\subset G_0$ be the subgroup corresponding to the Lie algebra $\mathfrak{k}=\mathfrak{g}_0^{0,0}$. It is the subgroup of elements $k$ such that $ad(k)$ preserves the Hodge decomposition of $\mathfrak{g}$. In particular $ad(k)$ preserves the positive definite form $(-1)^{p+1}\textrm{Tr}(ad(U)\circ ad(\Bar{V}))$ so $K_0$ is compact.\\
\\
Henceforth, we assume that $G_0$ and $K_0$ are linear groups, so that they always admit complexifications in the sense of Definition \ref{wcom} (see \cite[Chapter VII, Section 9]{knapp}). Let $G$ and $K$ denote such complexifications of $G_0$ and $K_0$ respectively.

\begin{definition}[Principal system of Hodge bundles]\label{def6}
A \emph{principal system of Hodge bundles} on $X$ for a Hodge group $G_0$ is a principal holomorphic $K$-bundle $P$ on $X$ together with a holomorphic map 
\begin{align*}
    \theta:\mathcal{T}_X\to P\times_K\mathfrak{g}^{-1,1}
\end{align*}
such that $[\theta(u),\theta(v)]=0$ for all local sections $u,v$ of $\mathcal{T}_X$.
\end{definition}

\begin{definition}[Hodge group of Hermitian type]\label{def7}
A Hodge group $G_0$ of \emph{Hermitian type} is a Hodge group such that the Hodge decomposition of $\mathfrak{g}$ has only types $(1,-1)$, $(0,0)$, and $(-1,1)$, and such that $G_0$ has no compact factors.
\end{definition}

In the case of a Hodge group of Hermitian type, $K_0\subset G_0$ is a maximal compact subgroup, and the quotient $\mathcal{D}=G_0/K_0$ is a \emph{Hermitian symmetric space of non-compact type}, and it can also be realized as a \emph{bounded symmetric domain}. Moreover, all bounded symmetric domains arise in this way.

\begin{definition}[Uniformizing system of Hodge bundles]\label{def8}
Let $G_0$ be a Hodge group of Hermitian type. A \emph{uniformizing system of Hodge bundles} on $X$ for $G_0$ is a principal system of Hodge bundles $(P,\theta)$ on $X$ such that map $\theta:\mathcal{T}_X\to P\times_K\mathfrak{g}^{-1,1}$ is an isomorphism of sheaves.
\end{definition}

A uniformizing system of Hodge bundles on $X$ corresponds to a holomorphic reduction of structure group for $\mathcal{T}_X$ to $K\to GL(n,\mathbb{C})$, where the map $K\to GL(n,\mathbb{C})$ is given by the adjoint representation of $K$ on $\mathfrak{g}^{-1,1}$. 

\begin{definition}[Uniformizing variation of Hodge structure]
A \emph{uniformizing variation of Hodge structure} is a uniformizing system of Hodge bundles $(P,\theta)$ equipped with a flat metric.    
\end{definition}

The table below shows groups $G_0$, $K_0$, and their respective complexifications $G$, $K$ associated to the bounded symmetric domains which appear later in this article.\\
\\
\begin{tabularx}{1.0\textwidth} { 
  | >{\centering\arraybackslash}X
  | >{\centering\arraybackslash}X
  | >{\centering\arraybackslash}X
  | >{\centering\arraybackslash}X 
  | >{\centering\arraybackslash}X | }
 \hline
 Bounded symmetric domain & $G_0$ & $K_0$ & $G$ & $K$ \\
 \hline
 Polydisk ($\mathbb{H}^n$)  & $SL(2,\mathbb{R})^n$  & $U(1)^n$ & $SL(2,\mathbb{C})^n$ & $(\mathbb{C}^*)^n$ \\
\hline
Type CI ($\mathcal{H}_n$)  & $Sp(2n,\mathbb{R})$  & $U(n)$ & $Sp(2n,\mathbb{C})$ & $GL(n,\mathbb{C})$ \\
\hline
Type DIII ($\mathcal{D}_n$)  & $SO^*(2n)$  & $U(n)$ & $SO^*(2n,\mathbb{C})$ & $GL(n,\mathbb{C})$ \\
\hline
Type BDI ($\mathcal{B}_n$)  & $SO(2,n)$  & $SO(2)\times SO(n)$ & $SO(2+n,\mathbb{C})$ & $SO(2,\mathbb{C})\times SO(n,\mathbb{C})$ \\
\hline
Type AIII ($\mathcal{A}_{pq}$)  & $SU(p,q)$  & $S(U(p)\times U(q))$ & $SL(p+q,\mathbb{C})$ & $S(GL(p,\mathbb{C})\times GL(q,\mathbb{C}))$ \\
\hline
\end{tabularx}

\subsection{A weak characterization}

The following observation is a weaker version of \cite[Proposition 4.1]{gkp2}, in a more general setting.

\begin{lemma}\label{lem1}
Let $X$ be normal, complex quasi-projective variety of dimension $n$ with klt singularities. Assume that the sheaf of reflexive differentials $\Omega^{[1]}_X$ is of the form 
\begin{align}\label{split}
    \Omega^{[1]}_X\cong\mathcal{L}_1\oplus\dots\oplus\mathcal{L}_n.
\end{align}
where $\mathcal{L}_i$ is a rank one $\mathbb{Q}$-Cartier sheaf for all $1\le i\le n$. Then $X$ has only quotient singularities.
\end{lemma}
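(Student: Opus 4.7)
The plan is to reduce to the case where each summand $\mathcal{L}_i$ is a genuine line bundle by passing to a finite quasi-\'etale cover, and then invoke a smoothness criterion for klt varieties whose sheaf of reflexive differentials is locally free.

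Since each $\mathcal{L}_i$ is a reflexive rank one $\mathbb{Q}$-Cartier sheaf, some reflexive tensor power $\mathcal{L}_i^{[m_i]}$ is Cartier. Iteratively forming the index-one cyclic covers associated to each $\mathcal{L}_i$ produces a finite quasi-\'etale morphism $\pi : Y \to X$ from a normal variety $Y$ such that every reflexive pullback $\pi^{[*]}\mathcal{L}_i$ is Cartier. Since $X$ is klt and $\pi$ is quasi-\'etale, $Y$ is itself klt by the standard fact that klt singularities are preserved under finite quasi-\'etale covers. By the pullback result of Greb-Kebekus-Kov\'acs-Peternell for reflexive differentials on klt varieties, the natural map $\pi^{[*]}\Omega^{[1]}_X \to \Omega^{[1]}_Y$ is an isomorphism, so the hypothesized splitting $(\ref{split})$ pulls back to
\begin{align*}
\Omega^{[1]}_Y \;\cong\; \pi^{[*]}\mathcal{L}_1 \oplus \dots \oplus \pi^{[*]}\mathcal{L}_n,
\end{align*}
a direct sum of honest line bundles. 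In particular, $\Omega^{[1]}_Y$ is locally free of rank $n = \dim Y$.

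The key next step is to conclude that $Y$ is smooth. For this I would invoke the smoothness criterion for klt varieties whose sheaf of reflexive differentials is locally free, a result close in spirit to the analysis underlying \cite[Proposition 4.1]{gkp2}. Once $Y$ is known to be smooth, the finite quasi-\'etale morphism $\pi : Y \to X$ exhibits $X$, \'etale-locally, as the quotient of the smooth variety $Y$ by the finite stabilizer group at each preimage point of a given singular point, which is exactly the definition of a quotient singularity. I expect the main obstacle to be precisely this smoothness conclusion: while \cite[Proposition 4.1]{gkp2} extracts stronger structural information about the cover (such as an \'etale quotient description by a torus or abelian variety in appropriate settings), here one only needs local freeness of $\Omega^{[1]}_Y$ to force the singular locus of $Y$ to be empty, which accounts for the weaker but more general conclusion.
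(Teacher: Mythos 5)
Your overall strategy coincides with the paper's: pass to iterated index-one cyclic covers to make the (co)tangent sheaf locally free, conclude smoothness of the cover, and then read off quotient singularities. Two points, however, need repair. A smaller one first: the smoothness criterion you want is precisely the Lipman--Zariski conjecture for klt spaces, \cite[Theorem 16.1]{gkkp} (local freeness of $\Omega^{[1]}_Y$ gives local freeness of its dual $\mathcal{T}_Y$, to which that theorem applies); the reference to \cite[Proposition 4.1]{gkp2} is not the statement doing the work, though the needed result does exist. More importantly for the construction itself, the index-one cyclic cover attached to $\mathcal{L}_i$ exists only once some reflexive power $\mathcal{L}_i^{[N_i]}$ is \emph{trivial}, not merely Cartier; globally this need not hold, which is exactly why the paper restricts to a neighbourhood $V$ of a given point on which all these powers are trivialized. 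Since the conclusion is local this is harmless, but as written your global quasi-\'etale cover $\pi:Y\to X$ with every $\pi^{[*]}\mathcal{L}_i$ Cartier is not justified.

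The genuine gap is your final step. The composite of the successive cyclic covers is in general \emph{not} Galois over $X$, so there is no group acting on $Y$ over $X$, and the phrase ``the quotient of $Y$ by the finite stabilizer group at each preimage point'' has no meaning for a non-Galois cover; a finite quasi-\'etale morphism with smooth source does not, by itself, present $X$ \'etale- or analytically-locally as a smooth germ modulo a finite group. To close this you must pass to a Galois closure: the Galois closure $\widetilde{Y}\to X$ of $Y\to X$ is again quasi-\'etale, and $\widetilde{Y}\to Y$ is quasi-\'etale onto the smooth variety $Y$, hence \'etale by purity of the branch locus, so $\widetilde{Y}$ is smooth and $X$ is locally the quotient of $\widetilde{Y}$ by stabilizer subgroups of the Galois group. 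This is precisely what the last paragraph of the paper's proof accomplishes with its iterated Galois-closure construction producing the smooth, Galois, quasi-\'etale cover $W_1\to V$; without some such argument your concluding sentence does not follow.
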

\begin{proof}
Since each $\mathcal{L}_i$ is assumed to be $\mathbb{Q}$-Cartier, there is a minimal number $N_i\in\mathbb{N}$ for each $i$, such that $\mathcal{L}_i^{[\otimes N_i]}$ is locally free. Note that for any point $x\in X$, for each $1\le i\le n$, we can find an open neighbourhood $U_i=U_i(x)$ of $x$ over which $\mathcal{L}_i^{[\otimes N_i]}$ is trivial, i.e. such that $(\mathcal{L}_i^{[\otimes N_i]})|_{U_i}\cong\mathcal{O}_{U_i}$. Let $V=\bigcap_iU_i$. Then $V$ is a non-empty open neighbourhood of $x$ as it is the intersection of finitely many non-empty open neighbourhoods of $x$.\\
Consider the restriction of $\mathcal{L}_1^{[\otimes N_1]}$ to $V$ and observe that it is trivial. Let $\gamma_1:V_1\to V$ be the associated index one quasi-\'{e}tale cover which is cyclic of order $N_1$. The complex space $V_1$ again has klt singularities, and we have $\gamma_1^{[*]}\mathcal{L}_1\cong\mathcal{O}_{V_1}$. In particular, it follows that $\Omega^{[1]}_{V_1}\cong\gamma_1^{[*]}\Omega^{[1]}_V$, i.e., $\Omega^{[1]}_{V_1}\cong\gamma_1^{[*]}(\mathcal{L}_1|_V)\oplus\dots\oplus\gamma_1^{[*]}(\mathcal{L}_n|_V)\cong\mathcal{O}_{V_1}\oplus\dots\oplus\gamma^{[*]}_1(\mathcal{L}_n|_V)$. Note that since $\mathcal{L}_2^{[\otimes N_2]}$ is trivial over $V$, $\gamma_1^{[*]}\mathcal{L}_2^{[\otimes N_2]}$ is trivial over $V_1$. Now let $\gamma_2:V_2\to V_1$ be the associated index one quasi-\'{e}tale cover which is cyclic of order $N_2$. We again have that $V_2$ is klt and $\gamma_2^{[*]}\gamma_1^{[*]}\mathcal{L}_2\cong\mathcal{O}_{V_2}$. Thus it follows that $\Omega^{[1]}_{V_2}\cong\gamma_2^{[*]}\Omega^{[1]}_{V_1}\cong\gamma_2^{[*]}\gamma_1^{[*]}(\mathcal{L}_1|_V)\oplus\dots\oplus\gamma_2^{[*]}\gamma_1^{[*]}(\mathcal{L}_n|_V)\cong\mathcal{O}_{V_2}\oplus\mathcal{O}_{V_2}\dots\oplus\gamma_2^{[*]}\gamma^{[*]}_1(\mathcal{L}_n|_V)$.\\
Continuing in this way $n-2$ more times we arrive at an index one quasi-\'{e}tale cover $\gamma_n:V_n\to V_{n-1}$ of order $N_n$, such that $V_n$ is klt and $\gamma_n^{[*]}\dots\gamma_1^{[*]}\mathcal{L}_n\cong\mathcal{O}_{V_n}$. This implies that $\Omega_{V_n}^{[1]}\cong\gamma_n^{[*]}\dots\gamma_1^{[*]}\Omega^{[1]}_V\cong\mathcal{O}_{V_n}^{\oplus n}$ i.e., $\Omega_{V_n}^{[1]}$ and $\mathcal{T}_{V_n}$ are both free. The solution for the Lipman-Zariski conjecture for spaces with klt singularities (\cite[Theorem 16.1]{gkkp}) then asserts that $V_n$ is smooth.\\
Let $G_n$ and $G_{n-1}$ be the Galois groups corresponding to the maps $V_n\to V_{n-1}$ and $V_{n-1}\to V_{n-2}$ respectively, and let $\widetilde{G}_n$ denote the Galois closure of $G_n$ and $G_{n-1}$. Then there is a variety $W_n$ such that $W_n\to V_n$ is quasi-\'{e}tale and Galois, and the composed map $W_n\to V_{n-2}$ is quasi-\'{e}tale and Galois with group $\widetilde{G}_n$. Moreover, it follows that $W_n$ is smooth because $V_n$ is smooth. Repeating this argument with the maps $W_n\to V_{n-2}$ and $V_{n-2}\to V_{n-3}$, we obtain a Galois, quasi-\'{e}tale cover $W_{n-2}\to V_{n-3}$ with $W_{n-2}$ smooth, the associated Galois group being $\widetilde{G}_{n-2}$, the Galois closure of $G_{n-2}$ and $\widetilde{G}_n$. Eventually, we arrive at a smooth, quasi-\'{e}tale Galois cover $W_1\to V$ with Galois group $\widetilde{G}_1$. In other words, $W_1\to V$ is a quotient map for the group $\widetilde{G}_1$. Hence we conclude that $X$ has quotient singularities only.
\end{proof}


\subsection{Another remark on stability}

We now make an observation about the stability of certain vector bundles associated to a semistable bundle. 

\begin{lemma}\label{lemsym}
Let $X$ be a $n$-dimensional algebraic variety over $\mathbb{C}$ and let $H$ be an ample divisor on $X$. Let $\mathcal{E}$ be a rank $r$ vector bundle on $X$ such that $\textrm{Sym}^2(\mathcal{E})$ is semistable with respect to $H$. Then $\mathcal{E}$ and $\mathcal{E}nd(\mathcal{E})$ are semistable with respect to $H$.
\end{lemma}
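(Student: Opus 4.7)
The plan is to handle the two statements separately. For $\mathcal{E}$ itself I will argue by contradiction: any destabilizing subsheaf of $\mathcal{E}$ must produce a destabilizing subsheaf of $\textrm{Sym}^2(\mathcal{E})$. Once semistability of $\mathcal{E}$ is established, the semistability of $\mathcal{E}nd(\mathcal{E})\cong\mathcal{E}\otimes\mathcal{E}^{\ast}$ with respect to the ample divisor $H$ follows from the classical result, valid in characteristic zero, that tensor products of $H$-semistable sheaves are $H$-semistable, combined with the fact that dualization preserves semistability.

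For the first step, suppose toward contradiction that $\mathcal{F}\subsetneq\mathcal{E}$ is a subsheaf of rank $s$ with $\mu_H(\mathcal{F})>\mu_H(\mathcal{E})$. After replacing $\mathcal{F}$ by its saturation inside $\mathcal{E}$, I may assume $\mathcal{F}$ is reflexive and hence locally free on a big open subset $U\subset X$. On $U$ the natural morphism $\textrm{Sym}^2(\mathcal{F}|_U)\hookrightarrow\textrm{Sym}^2(\mathcal{E}|_U)$ is an inclusion of vector bundles, since the functor $\textrm{Sym}^2$ sends inclusions of locally free sheaves to inclusions of locally free sheaves. Define $\mathcal{G}\subset\textrm{Sym}^2(\mathcal{E})$ to be the image of the natural morphism $\textrm{Sym}^2(\mathcal{F})\to\textrm{Sym}^2(\mathcal{E})$. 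It is a subsheaf of rank $s(s+1)/2$ which agrees with $\textrm{Sym}^2(\mathcal{F})$ on the big open $U$.

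Using the identity $\det(\textrm{Sym}^2 V)=(\det V)^{\otimes(s+1)}$ for a rank-$s$ vector bundle $V$, together with the fact that $\mu_H$ depends only on codimension-one data, one computes
\[
\mu_H(\mathcal{G})=\mu_H(\textrm{Sym}^2(\mathcal{F}))=\frac{(s+1)\,[\det\mathcal{F}]\cdot[H]^{n-1}}{s(s+1)/2}=2\mu_H(\mathcal{F}),
\]
and analogously $\mu_H(\textrm{Sym}^2(\mathcal{E}))=2\mu_H(\mathcal{E})$. Consequently $\mu_H(\mathcal{G})>\mu_H(\textrm{Sym}^2(\mathcal{E}))$, contradicting the semistability of $\textrm{Sym}^2(\mathcal{E})$ and forcing $\mathcal{E}$ to be semistable. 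The main obstacle lies in this slope computation: because $\mathcal{F}$ need not be locally free and $\textrm{Sym}^2$ can acquire torsion on sheaves that fail to be locally free, one must work on the big open $U$ and then verify that $c_1(\mathcal{G})$ agrees with $(s+1)\,c_1(\mathcal{F})$ up to contributions supported in codimension at least two, which do not affect $\mu_H$. Once this is done, the conclusion for $\mathcal{E}nd(\mathcal{E})$ reduces to the standard invocation of the tensor product theorem and duality for semistable sheaves in characteristic zero.
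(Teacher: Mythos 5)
Your argument is correct, and it reaches the contradiction by a slightly different route than the paper. You take a destabilizing subsheaf $\mathcal{F}\subset\mathcal{E}$ and push it forward covariantly: the image $\mathcal{G}$ of $\textrm{Sym}^2(\mathcal{F})\to\textrm{Sym}^2(\mathcal{E})$ is a subsheaf of rank $s(s+1)/2$ with $\mu_H(\mathcal{G})=2\mu_H(\mathcal{F})>2\mu_H(\mathcal{E})=\mu_H(\textrm{Sym}^2(\mathcal{E}))$, which destabilizes $\textrm{Sym}^2(\mathcal{E})$ directly. The paper instead passes to the quotient $\mathcal{G}=\mathcal{E}/\mathcal{F}$, uses that $\textrm{Sym}^2$ preserves surjections to get $\textrm{Sym}^2(\mathcal{E})\twoheadrightarrow\textrm{Sym}^2(\mathcal{G})$, dualizes to obtain $\textrm{Sym}^2(\mathcal{G}^\vee)\subset\textrm{Sym}^2(\mathcal{E}^\vee)$, and contradicts semistability of $\textrm{Sym}^2(\mathcal{E}^\vee)$; the quotient--dual detour is there precisely to avoid asking whether $\textrm{Sym}^2$ of a subsheaf inclusion stays injective, since right-exactness of $\textrm{Sym}^2$ is unconditional. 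Your version confronts that injectivity/torsion issue head on, and you resolve it correctly: in characteristic zero $\textrm{Sym}^2$ is a direct summand of the tensor square, so on the big open set $U$ where the saturated $\mathcal{F}$ is locally free the map is injective, and the kernel of $\textrm{Sym}^2(\mathcal{F})\to\mathcal{G}$ is supported in codimension at least two, so it does not affect $c_1\cdot[H]^{n-1}$; together with $c_1(\textrm{Sym}^2\mathcal{F})=(s+1)c_1(\mathcal{F})$ this gives the slope identity you use. Note that both your argument and the paper's implicitly use that slopes are determined in codimension one (saturation, reflexivity of duals, local freeness on a big open), which is fine in the normal/smooth settings where the lemma is applied; if anything, your proof makes this bookkeeping more explicit than the paper does, while the paper's dual formulation needs the analogous slope computation for $\textrm{Sym}^2(\mathcal{G}^\vee)$ without comment. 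The second assertion, semistability of $\mathcal{E}nd(\mathcal{E})\cong\mathcal{E}\otimes\mathcal{E}^\vee$, is handled identically in both proofs via the tensor-product and duality theorems for semistable sheaves in characteristic zero.
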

\begin{proof}
Note that $\textrm{Sym}^2(\mathcal{E})$ is a vector bundle of rank $r(r+1)/2$. Let $c_t(\mathcal{E})=\sum_{i=0}^rc_i(\mathcal{E})t^i$ be the Chern polynomial of $\mathcal{E}$, and let $\alpha_1,...,\alpha_r$ be the Chern roots of $\mathcal{E}$. Then the Chern polynomial of the $p$-th symmetric power $\textrm{Sym}^p(\mathcal{E})$ is given by
\begin{align*}
    c_t(\textrm{Sym}^p(\mathcal{E}))=\prod_{i_1\le\dots\le i_p}(1+(\alpha_i+\alpha_j)t).
\end{align*}
In particular, we have that $c_1(\textrm{Sym}^2(\mathcal{E}))$ is the coefficient of $t$ in the expression $\prod_{i\le j}(1+(\alpha_i+\alpha_j)t)$. A quick computation shows that $c_1(\textrm{Sym}^2(\mathcal{E}))=(r+1)\sum_{i=1}^r\alpha_i=(r+1)c_1(\mathcal{E})$. The slope of $\textrm{Sym}^2(\mathcal{E})$ with respect to $H$ is given by
\begin{align*}
    \mu_H(\textrm{Sym}^2(\mathcal{E}))=\frac{c_1(\textrm{Sym}^2(\mathcal{E}))\cdot[H]^{n-1}}{r(r+1)/2}=\frac{2c_1(\mathcal{E})\cdot[H]^{n-1}}{r}=2\mu_H(\mathcal{E}).
\end{align*}
Suppose $\mathcal{E}$ is not $H$-semistable. Let $\mathcal{F}\subset\mathcal{E}$ be a subsheaf of rank $r'$ such that $\mu_H(\mathcal{F})>\mu_H(\mathcal{E})$. Consider the short exact sequence
\begin{align*}
    0\to\mathcal{F}\to\mathcal{E}\to\mathcal{G}\to0
\end{align*}
where $\mathcal{G}=\mathcal{E}/\mathcal{F}$. Note that taking $\textrm{Sym}^2$ preserves surjective maps, so the map $\textrm{Sym}^2(\mathcal{E})\to\textrm{Sym}^2(\mathcal{G})$ is surjective. This implies that $\textrm{Sym}^2(\mathcal{G}^\vee)\subset \textrm{Sym}^2(\mathcal{E}^\vee)$. Since $\textrm{rank}(\mathcal{G})=r-r'$ and $c_1(\mathcal{G})=c_1(\mathcal{E})-c_1(\mathcal{F})$, we have 
\begin{align*}
    \mu_H(\mathcal{G})=\frac{(c_1(\mathcal{E})-c_1(\mathcal{F}))\cdot H^{n-1}}{r-r'}=\frac{r}{r-r'}\mu_H(\mathcal{E})-\frac{r'}{r-r'}\mu_H(\mathcal{F}).
\end{align*}
Thus $\mu_H(\mathcal{F})>\mu_H(\mathcal{E})$ implies that $\mu_H(\mathcal{G})<\mu_H(\mathcal{E})$, which further implies $\mu_H(\mathcal{G^\vee})=-\mu_H(\mathcal{G})>-\mu_H(\mathcal{E})=\mu_H(\mathcal{E}^\vee)$. But this means that $\mu_H(\textrm{Sym}^2(\mathcal{G}^\vee))>\mu_H(\textrm{Sym}^2(\mathcal{E}^\vee))$, which implies that $\textrm{Sym}^2(\mathcal{E}^\vee)$ is not $H$-semistable, and hence neither is $\textrm{Sym}^2(\mathcal{E})$, a contradiction.\\
Since $\mathcal{E}$ is semistable with respect to $H$, so is the dual bundle $\mathcal{E}^{\vee}$, and the endomorphism bundle $\mathcal{E}nd(\mathcal{E})\cong\mathcal{E}\otimes\mathcal{E}^{\vee}$, because the tensor product of semistable bundles is semistable.
\end{proof}

An analogous statement to Lemma \ref{lemsym} holds for the second wedge power.

\begin{lemma}\label{lemwedge}
Let $X$ be a $n$-dimensional algebraic variety and let $H$ be an ample divisor on $X$. Let $\mathcal{E}$ be a vector bundle of rank $r\ge3$ on $X$ such that $\bigwedge^2(\mathcal{E})$ is semistable with respect to $H$. Then $\mathcal{E}$ and $\mathcal{E}nd(\mathcal{E})$ are semistable with respect to $H$.
\end{lemma}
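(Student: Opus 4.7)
The plan is to mirror the proof of Lemma \ref{lemsym} closely, with one essential modification forced by the fact that $\bigwedge^2$ vanishes on rank-one sheaves. First I would redo the Chern-root slope computation: from $c_t(\bigwedge^2(\mathcal{E})) = \prod_{i<j}(1+(\alpha_i+\alpha_j)t)$ one reads off $c_1(\bigwedge^2(\mathcal{E})) = (r-1)c_1(\mathcal{E})$, and together with $\textrm{rank}(\bigwedge^2(\mathcal{E})) = r(r-1)/2$ this gives the same relation $\mu_H(\bigwedge^2(\mathcal{E})) = 2\mu_H(\mathcal{E})$ as in the symmetric-power case. Then assume for contradiction that $\mathcal{E}$ is not $H$-semistable, and pick a saturated destabilizing subsheaf $\mathcal{F} \subset \mathcal{E}$ of rank $r'$ with $\mu_H(\mathcal{F}) > \mu_H(\mathcal{E})$; set $\mathcal{G} = \mathcal{E}/\mathcal{F}$, so that $\mu_H(\mathcal{G}) < \mu_H(\mathcal{E})$.

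Since $r \ge 3$, at least one of $r' \ge 2$ and $r - r' \ge 2$ holds, and I would split into two cases accordingly. If $r - r' \ge 2$, I would run exactly the argument of Lemma \ref{lemsym} with $\textrm{Sym}^2$ replaced by $\bigwedge^2$: the surjection $\mathcal{E} \twoheadrightarrow \mathcal{G}$ yields a surjection $\bigwedge^2(\mathcal{E}) \twoheadrightarrow \bigwedge^2(\mathcal{G})$, which after dualising gives an inclusion $\bigwedge^2(\mathcal{G}^\vee) \hookrightarrow \bigwedge^2(\mathcal{E}^\vee)$ with
\[
\mu_H(\bigwedge^2(\mathcal{G}^\vee)) \;=\; 2\mu_H(\mathcal{G}^\vee) \;>\; 2\mu_H(\mathcal{E}^\vee) \;=\; \mu_H(\bigwedge^2(\mathcal{E}^\vee)),
\]
contradicting the semistability of $\bigwedge^2(\mathcal{E}^\vee) \cong (\bigwedge^2(\mathcal{E}))^\vee$. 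In the remaining case $r - r' = 1$ the quotient $\mathcal{G}$ is a line bundle and the dual trick collapses; instead I would use the ``$\mathcal{F}$-side'' directly. Because $\mathcal{F}$ is saturated in the vector bundle $\mathcal{E}$ it is reflexive and hence locally free outside a closed subset of codimension at least two, so the natural map $\bigwedge^2(\mathcal{F}) \to \bigwedge^2(\mathcal{E})$ has torsion kernel supported in codimension $\ge 2$, whose first Chern class therefore vanishes. The image is then a subsheaf of $\bigwedge^2(\mathcal{E})$ of rank $r'(r'-1)/2$ whose first Chern class agrees with that of $\bigwedge^2(\mathcal{F})$, so the same slope formula gives $\mu_H(\textrm{image}) = 2\mu_H(\mathcal{F}) > \mu_H(\bigwedge^2(\mathcal{E}))$, contradicting the semistability of $\bigwedge^2(\mathcal{E})$ directly.

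Once $\mathcal{E}$ is known to be semistable, the dual $\mathcal{E}^\vee$ is semistable, and $\mathcal{E}nd(\mathcal{E}) \cong \mathcal{E} \otimes \mathcal{E}^\vee$ is semistable by the fact that tensor products of semistable sheaves are semistable in characteristic zero, exactly as in the last line of Lemma \ref{lemsym}. The main obstacle is precisely the case split, which is why the hypothesis $r \ge 3$ must appear in the statement; verifying that the codimension-$\ge 2$ locus where a saturated $\mathcal{F}$ fails to be locally free contributes nothing to the first Chern class is the only genuinely new technical point relative to the symmetric case.
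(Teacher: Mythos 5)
Your proposal is correct, and it is in fact more complete than the argument the paper actually gives. The paper disposes of this lemma with the remark that the proof is ``essentially the same'' as that of Lemma \ref{lemsym}, i.e.\ the slope identity $\mu_H(\bigwedge^2\mathcal{E})=2\mu_H(\mathcal{E})$ plus the fact that $\bigwedge^2$ preserves surjections, applied to the quotient $\mathcal{G}=\mathcal{E}/\mathcal{F}$ of a destabilizing subsheaf. Read literally, that argument silently breaks precisely when the destabilizing subsheaf $\mathcal{F}$ has corank one: then $\mathcal{G}$ has rank one, $\bigwedge^2(\mathcal{G})=0$, and the surjection $\bigwedge^2(\mathcal{E})\to\bigwedge^2(\mathcal{G})$ carries no information (and such examples exist, e.g.\ $\mathcal{E}=\mathcal{F}\oplus\mathcal{L}$ with $\mathcal{F}$ stable of larger slope, where every destabilizing subsheaf contains $\mathcal{F}$). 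Your case split is exactly the repair: for $r-r'\ge2$ you reproduce the paper's quotient-side argument, and for $r-r'=1$ (so $r'\ge2$, which is where $r\ge3$ enters) you work on the subsheaf side, using that a saturated $\mathcal{F}$ is locally free outside codimension two so that the generically injective map $\bigwedge^2(\mathcal{F})\to\bigwedge^2(\mathcal{E})$ has image of rank $r'(r'-1)/2$ and first Chern class equal to $c_1(\bigwedge^2\mathcal{F})=(r'-1)c_1(\mathcal{F})$, giving a subsheaf of slope $2\mu_H(\mathcal{F})>\mu_H(\bigwedge^2\mathcal{E})$ and the desired contradiction; an equivalent alternative would be to dualize and feed the rank-$(r-1)$ quotient $\mathcal{F}^\vee$ of $\mathcal{E}^\vee$ into the quotient-side argument. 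Your Chern-root computation $c_1(\bigwedge^2\mathcal{E})=(r-1)c_1(\mathcal{E})$ and the final step for $\mathcal{E}nd(\mathcal{E})=\mathcal{E}\otimes\mathcal{E}^\vee$ match the paper. The only small caution, shared with the paper's own proof of Lemma \ref{lemsym}, is the identification of $\bigwedge^2(\mathcal{G}^\vee)$ with $(\bigwedge^2\mathcal{G})^\vee$ in the first case, which is harmless because $\mathcal{G}$, being torsion free, is locally free in codimension one, so the slopes agree.
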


The proof of this Lemma is essentially the same as that of Lemma \ref{lemsym}, because applying $\bigwedge^2$ also preserves surjective maps, and a simple computation shows that $\mu_H(\bigwedge^2\mathcal{E})=2\mu_H(\mathcal{E})$, just as in the $\textrm{Sym}^2$ case.\\
It was pointed out to us by A. Langer that this Lemma does not work when the rank of $\mathcal{E}$ is 2. This is because $\bigwedge^2(\mathcal{E})$ will be a line bundle in this case, which is always semistable, but this does not imply that $\mathcal{E}$ is semistable.

\section{Revisiting Simpson's results}

In this section, our goal is to study the structure of the tangent bundle of a Hermitian symmetric space $\mathcal{D}$ of non-compact type. This structure descends to the tangent bundle of a smooth projective quotient $X=\mathcal{D}/\Gamma$, where $\Gamma$ is a discrete, cocompact group of automorphisms of $\mathcal{D}$. We use the notation of Sections 8 and 9 of \cite{simp}, and we use the theory developed in Chapter 12 of \cite{cmsp}.

\subsection{The tangent bundle of \texorpdfstring{$\mathcal{D}$}{}}

Let $G_0$ be a Hodge group, and $K_0$ the subgroup corresponding to the Lie algebra $\mathfrak{k}_0=\mathfrak{g}^{0,0}_0$, as in Definition \ref{def5}. Recall that since $G_0$ and $K_0$ are assumed to be linear, they admit complexifications in the sense of Definition \ref{wcom}. Let $G$ and $K$ be complexifications of $G_0$ and $K_0$ respectively.\\
\\
Let $X$ be a smooth projective variety of dimension $n$ over $\mathbb{C}$, and let $(P,\theta)$ be a principal system of Hodge bundles on $X$ for $G$. This means that $P$ is a principal holomorphic $K$-bundle on $X$, and $\theta$ is a holomorphic map
\begin{align*}
    \theta:\mathcal{T}_X\to P\times_K\mathfrak{g}^{-1,1}
\end{align*}
such that $[\theta(u),\theta(v)]=0$ for all local sections $u,v$ of $\mathcal{T}_X$.
\begin{definition}
A \emph{metric} $H$ for a principal system of Hodge bundles is a $C^\infty$ reduction of structure group of $P$ from $K$ to $K_0$, i.e., a principal $K_0$-bundle $P_H\subset P$.  
\end{definition}

Let $(P,\theta)$ be a principal system of Hodge bundles with metric $P_H\subset P$. This reduction in structure group corresponds to a Hermitian metric $H$ on the associated system of Hodge bundles $E=P\times_K\mathfrak{g}\cong P_H\times_{K_0}\mathfrak{g}$. Let $d'_H\in\mathcal{A}^1(\textrm{End}(E))$ denote the Chern connection on $E$ with respect to $H$, and let $d_H$ be the $K_0$-connection on $P_H$ from which $d'_H$ is induced. Now we view $E$ as a $G_0$-bundle $E'=R_H\times_{G_0}\mathfrak{g}^{-1,1}$, where $R_H=P_H\times_{K_0}G_0$, and $G_0$ acts on $\mathfrak{g}$ via the adjoint action. The map $\theta:\mathcal{T}_X\to P\times_K\mathfrak{g}^{-1,1}$ gives an $\textrm{End}(E')$-valued one-form $\theta'\in\mathcal{A}^{1,0}(\textrm{End}(E'))$. Let $\Bar{\theta}'\in\mathcal{A}^{0,1}(\textrm{End}(E'))$ be the adjoint of $\theta'$ with respect to $H$, i.e., $\langle{\theta'u,v\rangle}_H=\langle{u,\Bar{\theta}'v\rangle}_H$ for local sections $u,v$ of $E$. Let $\sigma$ and $\Bar{\sigma}$ be the $G_0$-connections on $R_H$ which induce $\theta'$ and $\Bar{\theta}'$ respectively. Then $D'_H=d'_H+\theta'+\Bar{\theta}'\in\mathcal{A}^1(\textrm{End}(E'))$ is a connection on $E$. Let $D_H=d_H+\sigma+\Bar{\sigma}$ denote the $G_0$-connection on $R_H$ which induces $D'_H$.   

\begin{definition}
A \emph{principal variation of Hodge structure} for a Hodge group $G_0$ is a principal system of Hodge bundles $(P,\theta)$ together with a metric $P_H$ such that the curvature of the associated connection $D_H$ is zero. 
\end{definition}

Let $(P,\theta)$ together with metric $(P_H,D_H)$ be a principal variation of Hodge structure on $X$, and let $\widetilde{X}$ be a universal cover of $X$. We denote by $\pi$ the quotient map $\pi:\widetilde{X}\to X$. Let $\widetilde{R}_H$ be the pullback of the $G_0$-bundle $R_H=P_H\times_{K_0} G_0$ to $\widetilde{X}$, then $\widetilde{R}_H=\pi^*(P_H\times_{K_0} G_0)=\widetilde{P}_H\times_{K_0} G_0$, where $\widetilde{P}_H=\pi^*P_H$ is a principal $K_0$-bundle on $\widetilde{X}$. The flat connection $D_H$ on $R_H$ pulls back to a flat connection on $\widetilde{R}_H$. Since $\widetilde{R}_H$ is a principal bundle with a flat connection on a simply connected space $\widetilde{X}$, we have a trivialisation $\phi:\widetilde{R}_H=\widetilde{P}_H\times_{K_0}G_0\cong\widetilde{X}\times G_0$, hence $\widetilde{R}_H$ admits a global section. A global section of $\widetilde{P}_H\times_{K_0}G_0$ corresponds to a $K_0$-equivariant map $\varphi:\widetilde{P}_H\to G_0$, which induces a map $\widetilde{P}_H/K_0\to G_0/K_0$. Since $\widetilde{X}$ is diffeomorphic to $\widetilde{P}_H/K_0$, we get a map $\widetilde{X}\to G_0/K_0$, which sends a point $x\in\widetilde{X}$ to a right $K_0$-coset $\varphi\widetilde{P}_{H,x}\subset G_0$.\\
\\
Let $H$ be a Lie group. Then, corresponding to every flat principal $H$-bundle $P\to X$, there is a group homomorphism $\rho:\pi_1(X)\to H$, known as the \emph{holonomy morphism} or the \emph{monodromy morphism}. More precisely, the following is true.
\begin{theorem}[\cite{morita}, Theorem 2.9]\label{monthm}
    The correspondence, which sends each flat principal $H$-bundle over $X$ to its holonomy morphism, induces a bijection
    $$
    \{\textrm{isomorphism classes of flat $H$-bundles over } X\}\cong \{\textrm{conjugacy classes of homomorphism } \rho:\pi_1(X)\to H\}.
    $$
\end{theorem}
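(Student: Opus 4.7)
The plan is to construct the bijection explicitly in both directions and verify that the two constructions are mutually inverse up to the appropriate equivalence (isomorphism of flat bundles on one side, overall conjugation by an element of $H$ on the other). Throughout, I fix a basepoint $x_0\in X$ and write $\pi:\widetilde X\to X$ for the universal cover.

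Starting from a flat principal $H$-bundle $(P,D)\to X$, I would define the holonomy morphism as follows. Pick a point $p_0\in P_{x_0}$; for each piecewise smooth loop $\gamma$ based at $x_0$, flatness of $D$ together with the standard theory of connections on principal bundles provides a unique horizontal lift $\widetilde\gamma$ starting at $p_0$, and its endpoint satisfies $\widetilde\gamma(1)=p_0\cdot h$ for a unique $h\in H$. Setting $\rho(\gamma):=h$ yields a map $\pi_1(X,x_0)\to H$. The decisive point, which is where flatness is used, is that $\rho(\gamma)$ depends only on the homotopy class of $\gamma$: given a homotopy $\gamma_s$ between $\gamma_0$ and $\gamma_1$, the horizontal lifts assemble into a horizontal lift of the full homotopy, and vanishing curvature forces the endpoints to agree. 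The homomorphism property follows from concatenation of horizontal lifts and right $H$-equivariance, and a change of reference point $p_0\mapsto p_0\cdot h_0$ replaces $\rho$ by $h_0^{-1}\rho\,h_0$, so only the conjugacy class of $\rho$ is intrinsic.

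For the inverse, given $\rho:\pi_1(X)\to H$, I would form the associated bundle $P_\rho:=\widetilde X\times_\rho H$, defined as the quotient of $\widetilde X\times H$ by the diagonal action of $\pi_1(X)$ (deck transformations on $\widetilde X$, left multiplication via $\rho$ on $H$). The product connection on $\widetilde X\times H$ is $\pi_1(X)$-invariant and descends to a flat connection on $P_\rho$, whose monodromy is, by direct inspection, conjugate to $\rho$. Conjugate representations produce isomorphic flat bundles via the obvious translation automorphism of $H$, so the assignment $\rho\mapsto P_\rho$ factors through conjugacy classes.

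The principal technical step — and the one I expect to be the main obstacle — is verifying that the round-trip $P\mapsto\rho\mapsto P_\rho$ recovers $P$ up to isomorphism of flat bundles. Here one uses that $\pi^*P$ carries a flat connection on the simply-connected base $\widetilde X$, which by integration of the horizontality distribution (or equivalently, the Frobenius theorem applied to the horizontal foliation) admits a global horizontal section; this provides a trivialization $\pi^*P\cong\widetilde X\times H$. Under this trivialization the lift of the deck action of $\pi_1(X)$ becomes right multiplication by $\rho(\gamma)$ on the $H$-factor, exhibiting $P\cong\widetilde X\times_\rho H=P_\rho$ together with their flat connections. Keeping track of how the choice of horizontal section affects the identification and showing it only changes $\rho$ by conjugation is the bookkeeping core of the argument; the other direction $\rho\mapsto P_\rho\mapsto\rho$ is then essentially immediate from the construction.
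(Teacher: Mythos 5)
Your proposal is correct and follows essentially the same route as the argument the paper relies on (Morita's proof of his Theorem 2.9, which the paper recalls right after the statement): holonomy via horizontal lifts of loops, the associated-bundle construction $\widetilde X\times_\rho H$, and the key trivialization of $\pi^*P$ over the simply connected cover obtained by integrating the flat horizontal distribution, under which $P$ is recovered as $(\widetilde X\times H)/\pi_1(X)$. One small bookkeeping point: in your round-trip step the deck group must act on the $H$-factor by \emph{left} translation by $\rho(\gamma)$ (or $\rho(\gamma)^{-1}$, as in the paper's formula $\phi(r\cdot\gamma)=(\phi_1(r)\gamma,\sigma(\gamma)^{-1}\phi_2(r))$), not by right multiplication, since the action has to commute with the principal right $H$-action — a convention slip consistent with your own definition of $P_\rho$, not a gap.
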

In view of this, the flat $G_0$-bundle $R_H$ on $X$ corresponds to a homomorphism $\sigma:\pi_1(X)\to G_0$. Recall from the preceding paragraph that we have a trivialisation $\phi:\pi^*R_H=\widetilde{R}_H\cong\widetilde{X}\times G_0$. From the proof of \cite[Theorem 2.9]{morita}, it follows that the flat bundle $R_H$ can be expressed as 
\begin{align*}
    R_H=\widetilde{R}_H/\pi_1(X)=(\widetilde{X}\times G_0)/\pi_1(X),
\end{align*}
where $\pi_1(X)$ acts on $\widetilde{X}\times G_0$ by automorphisms. This action is described explicitly in \cite{morita}, p.58-59. Setting $\phi=(\phi_1,\phi_2)$, we have that $\phi(r\cdot\gamma)=(\phi_1(r)\gamma,\sigma(\gamma)^{-1}\phi_2(r))$ for all local sections $r\in \widetilde{R}_H$ and all $\gamma\in\pi_1(X)$. The map $\widetilde{X}\to G_0/K_0$, given by $x\mapsto\varphi \widetilde{P}_{H,x}$ is equivariant under the representation $\sigma$, i.e., 
\begin{align*}
    \varphi \widetilde{P}_{H,\gamma x}=\varphi(\gamma\cdot \widetilde{P}_{H,x})=\sigma(\gamma)(\varphi\widetilde{P}_{H,x})
\end{align*}
for all $\gamma\in\pi_1(X)$, and $x\in \widetilde{X}$.\\
Note that $\mathcal{D}=G_0/K_0$ is a homogeneous space and its tangent bundle $T_{\mathcal{D}}$ is a homogeneous vector bundle on $\mathcal{D}$. Since $G_0$ is a Hodge group, $G_0/K_0$ is in fact a \emph{reductive domain}. Thus there is an $\textrm{Ad}(K_0)$-invariant decomposition of the Lie algebra of $G_0$ as $\mathfrak{g}_0=\mathfrak{k}_0\oplus\mathfrak{m}$. From the discussion in \cite[Chapter 12.2]{cmsp}, there is an isomorphism $G_0\times_{K_0}\mathfrak{m}\cong T_{\mathcal{D}}$. To obtain the holomorphic tangent bundle $\mathcal{T}_{\mathcal{D}}$ of $\mathcal{D}$, consider the splitting $\mathfrak{m}\otimes\mathbb{C}=\mathfrak{m}^+\oplus\mathfrak{m}^-$, where $\mathfrak{m}^+=\bigoplus_{p>0}\mathfrak{g}^{p,-p}$, and $\mathfrak{m}^-=\bigoplus_{p<0}\mathfrak{g}^{p,-p}$. By \cite[Lemma 12.2.2]{cmsp}, this splitting defines a complex structure on $\mathfrak{m}$. By \cite[Lemma-Definition 12.2.3]{cmsp}, the holomorphic tangent bundle of $\mathcal{D}$ is given by $\mathcal{T}_{\mathcal{D}}\cong G_0\times_{K_0}\mathfrak{m}^-$, where $K_0$ acts on $\mathfrak{m}^-$ via the adjoint action. Let $H_0$ be the maximal compact subgroup of $G_0$ containing $K_0$. Then $G_0/H_0$ is a \emph{symmetric space}. The associated \emph{Cartan decomposition} of $\mathfrak{g}_0$ is $\mathfrak{g}_0=\mathfrak{h}_0\oplus\mathfrak{p}_0$. The \emph{Cartan involution} $\iota:\mathfrak{g}_0\to\mathfrak{g}_0$ is defined such that $\iota|_{\mathfrak{h}_0}=\textrm{id}$, and $\iota|_{\mathfrak{p}_0}=-\textrm{id}$. The Cartan decomposition is reflected in the tangent bundle of $\mathcal{D}$ as follows. There is a canonical projection $\omega:\mathcal{D}=G_0/K_0\to G_0/H_0$ with respect to which the tangent space at any point $x\in\mathcal{D}$ splits into vertical and horizontal tangent spaces given by
$$
T^v_{\mathcal{D},x}=\textrm{fiber of $\omega$ through }x,\;\;\;
T^h_{\mathcal{D},x}=\textrm{orthogonal complement of }T^v_{\mathcal{D},x}.
$$
From \cite[Lemma 12.5.2]{cmsp}, we know that the $\pm1$-eigenspaces of the Cartan involution on $\mathfrak{g}$ are given by $\mathfrak{h}=\mathfrak{h}_0\otimes\mathbb{C}=\bigoplus_{j\textrm{ even}}\mathfrak{g}^{-j,j}$, and $\mathfrak{p}=\mathfrak{p}_0\otimes\mathbb{C}=\bigoplus_{j\textrm{ odd}}\mathfrak{g}^{-j,j}$. Moreover, there are canonical identifications $T^v_{\mathcal{D},x}=\mathfrak{h}_0/\mathfrak{k}_0$, and $T^h_{\mathcal{D},x}=\mathfrak{p}_0$. By \cite[Proposition 12.5.3]{cmsp}, the tangent bundle $T_{\mathcal{D}}$ canonically decomposes as $T_{\mathcal{D}}=T^v_{\mathcal{D}}\oplus T^h_{\mathcal{D}}$ into vertical and horizontal components. Both components are homogeneous vector bundles on $\mathcal{D}$ for the adjoint action of $K_0$ on $\mathfrak{g}_0$. They can be written as
\begin{align*}
T^v_{\mathcal{D}}=G_0\times_{K_0}\mathfrak{h}_0/\mathfrak{k}_0,\;\;\; T^h_{\mathcal{D}}=G_0\times_{K_0}\mathfrak{p}_0.
\end{align*}
The vertical tangent bundle $T^v_{\mathcal{D}}$ is holomorphic, because its fibers are complex submanifolds of $\mathcal{D}$ (see \cite[Problem 4.4.2]{cmsp}). The horizontal tangent bundle is in general not holomorphic. The subbundle $G_0\times_{K_0}\mathfrak{g}^{-1,1}$ of the complexification $T^h_{\mathcal{D}}\otimes\mathbb{C}$ has the structure of a holomorphic vector bundle, and is called the \emph{holomorphic horizontal tangent bundle} of $\mathcal{D}$.\\
Now suppose that $G_0$ is a Hodge group of Hermitian type. Then $K_0$ is a maximal compact subgroup of $G_0$, and $\mathcal{D}=G_0/K_0$ is the associated Hermitian symmetric space of non-compact type. While there may be several $G_0$ corresponding to a fixed $\mathcal{D}$, their connected components are all isogenous, so the complexified Lie algebra $\mathfrak{g}$ and its Hodge decomposition are determined (see \cite[Section II, Corollary 3.30]{milne}). 
\begin{lemma}\label{lemtan}
    The holomorphic tangent bundle $\mathcal{T}_{\mathcal{D}}$ of the Hermitian symmetric space $\mathcal{D}$ can be written as
    \begin{align*}
        \mathcal{T}_{\mathcal{D}}\cong P'\times_K\mathfrak{g}^{-1,1},
    \end{align*}
    where $P'$ is a principal $K$-bundle on $\mathcal{D}$.
\end{lemma}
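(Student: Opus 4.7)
The plan is to combine the identification of $\mathcal{T}_{\mathcal{D}}$ already set up in the preceding discussion with a structure-group extension from the compact real form $K_0$ to its complexification $K$. First I would use the Hermitian type hypothesis on $G_0$: by Definition \ref{def7}, the Hodge decomposition of $\mathfrak{g}$ has only types $(1,-1)$, $(0,0)$, $(-1,1)$, so the complex tangent space decomposition $\mathfrak{m}\otimes\mathbb{C} = \mathfrak{m}^+\oplus\mathfrak{m}^-$ reduces to $\mathfrak{m}^- = \mathfrak{g}^{-1,1}$ and $\mathfrak{m}^+ = \mathfrak{g}^{1,-1}$. Combined with the cited identification from \cite[Lemma-Definition 12.2.3]{cmsp}, this gives the holomorphic isomorphism
\begin{equation*}
\mathcal{T}_{\mathcal{D}} \cong G_0 \times_{K_0} \mathfrak{m}^- \cong G_0 \times_{K_0} \mathfrak{g}^{-1,1},
\end{equation*}
where $K_0$ acts on $\mathfrak{g}^{-1,1}$ via the (complex-linear) restriction of its adjoint action on $\mathfrak{g}$.

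Next, I would define the desired principal $K$-bundle by extending structure group along the inclusion $K_0 \hookrightarrow K$:
\begin{equation*}
P' := G_0 \times_{K_0} K,
\end{equation*}
where $K_0$ acts on $G_0 \times K$ by $k_0 \cdot (g,k) = (g k_0^{-1}, k_0 k)$. This is naturally a principal $K$-bundle over $\mathcal{D} = G_0/K_0$ via the projection induced by the first factor. A holomorphic structure on $P'$ is obtained via the Borel embedding $\mathcal{D} \hookrightarrow \check{\mathcal{D}} = G/Q$ into the compact dual (with $Q$ the parabolic of Lie algebra $\mathfrak{k}\oplus\mathfrak{m}^+$), pulling back the holomorphic principal $K$-bundle $G/\exp(\mathfrak{m}^+) \to G/Q$ and identifying it with $P'$.

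Finally, I would apply the standard associated-bundle identity for structure-group extension: for any $K$-representation $V$, there is a canonical isomorphism
\begin{equation*}
(G_0 \times_{K_0} K) \times_K V \;\cong\; G_0 \times_{K_0} V,
\end{equation*}
with $V$ regarded on the right hand side as a $K_0$-representation by restriction. Taking $V = \mathfrak{g}^{-1,1}$ (with its $K$-action obtained by complexifying the $K_0$-adjoint action), and combining with the first display, yields
\begin{equation*}
P' \times_K \mathfrak{g}^{-1,1} \;\cong\; G_0 \times_{K_0} \mathfrak{g}^{-1,1} \;\cong\; \mathcal{T}_{\mathcal{D}},
\end{equation*}
which is the claim.

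The main obstacle I anticipate is verifying that all the isomorphisms above are isomorphisms of \emph{holomorphic} vector bundles, rather than merely of smooth (or real-analytic) bundles. The nontrivial input here is the fact that the $K_0$-adjoint action on $\mathfrak{g}^{-1,1}$ extends uniquely to a complex-analytic $K$-action, which follows because $K$ is the universal (or chosen) complexification of $K_0$ in the sense of Definition \ref{wcom} and $\mathfrak{g}^{-1,1}$ is already a complex vector space on which $K_0$ acts $\mathbb{C}$-linearly. Once this is in place, the holomorphic structure on $P'$ coming from the Borel embedding is compatible with the holomorphic structure on $\mathcal{T}_\mathcal{D}$ inherited from the complex manifold structure on $\mathcal{D}$, and the chain of isomorphisms above becomes holomorphic.
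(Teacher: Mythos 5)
Your proposal is correct, and it produces the same bundle as the paper --- $P'$ is in the end the restriction to $\mathcal{D}$ of the $K$-bundle $G/Q^-\to G/(K\ltimes Q^-)$ --- but it is organized differently. The paper works on the compact dual throughout: it writes $\mathcal{T}_{\mathcal{D}^*}\cong G\times_{(K\ltimes Q^-)}\mathfrak{g}^{-1,1}$, notes that the unipotent part $Q^-$ acts trivially on $\mathfrak{g}^{-1,1}$ viewed as $\mathfrak{g}/(\mathfrak{k}\oplus\mathfrak{g}^{1,-1})$, so the isotropy action factors through $K$ and $\mathcal{T}_{\mathcal{D}^*}\cong (G/Q^-)\times_K\mathfrak{g}^{-1,1}$, and then restricts to the open subset $\mathcal{D}$. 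You instead start from the real-form description $\mathcal{T}_{\mathcal{D}}\cong G_0\times_{K_0}\mathfrak{g}^{-1,1}$ (the identification from \cite{cmsp} quoted just before the lemma), set $P'=G_0\times_{K_0}K$ by extension of structure group along $K_0\hookrightarrow K$, and invoke the standard identity $(G_0\times_{K_0}K)\times_K V\cong G_0\times_{K_0}V$; your remark that the complex-linear adjoint action of $K_0$ on $\mathfrak{g}^{-1,1}$ complexifies to a holomorphic $K$-action is exactly the input needed for that step. This is a clean and somewhat more economical way to get the underlying bundle isomorphism. Be aware, though, that the holomorphicity concern you raise at the end is not actually bypassed: the only holomorphic structure you place on $P'$ comes from identifying it with $\bigl(G/\exp(\mathfrak{m}^+)\bigr)|_{\mathcal{D}}$ via the Borel embedding, and verifying that with this structure the isomorphism $P'\times_K\mathfrak{g}^{-1,1}\cong\mathcal{T}_{\mathcal{D}}$ is holomorphic is precisely the paper's compact-dual computation (triviality of the $Q^-$-isotropy action on the holomorphic tangent space of $G/Q$). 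So both arguments rest on the same geometric fact; yours handles the associated-bundle bookkeeping through structure-group extension of the real form, while the paper makes the holomorphic identification on $\mathcal{D}^*$ explicit and then restricts.
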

\begin{proof}
Let $Q^+$ and $Q^-$ denote the Lie subgroups of $G$ corresponding to the Lie subalgebras $\mathfrak{g}^{-1,1}$ and $\mathfrak{g}^{1,-1}$ of $\mathfrak{g}$ respectively. Then $Q^+$ and $Q^-$ are abelian unipotent subgroups of $G$ stabilized by conjugation by $K$ (\cite[Section 2.4]{herm}). There is a Zariski open embedding \begin{align*}
    j:\mathcal{D}\cong G_0/K_0\hookrightarrow G/(K\ltimes Q^-)\cong\mathcal{D}^*
\end{align*}
called the Borel embedding (see \cite[Chapter VIII, Section 7]{helga}), where $\mathcal{D}^*$ is a complex homogeneous projective variety known as the \emph{compact dual} of the Hermitian symmetric space $\mathcal{D}$. By the discussion in the preceding paragraph, the holomorphic tangent bundle of $\mathcal{D}^*$ can be expressed as $\mathcal{T}_{\mathcal{D}^*}\cong G\times_{(K\ltimes Q^-)}\mathfrak{g}^{-1,1}$, where $K\ltimes Q^-$ acts on $\mathfrak{g}^{-1,1}$ via the adjoint action. The adjoint action of $Q^-$ on $\mathfrak{g}^{-1,1}$ is trivial, i.e., $qXq^{-1}=0$ for all $q\in Q^-$, $X\in\mathfrak{g}^{-1,1}$. Thus the adjoint action of $K\ltimes Q^-$ on $\mathfrak{g}^{-1,1}$ factors through $K$. Hence $\mathcal{T}_{\mathcal{D}^*}$ admits a reduction in structure group from $K\ltimes Q^-$ to $K$, and we can write $\mathcal{T}_{\mathcal{D}^*}\cong P\times_K\mathfrak{g}^{-1,1}$, where $P$ is a principal $K$-bundle on $\mathcal{D}^*$ such that $P\times_K(K\ltimes Q^-)\cong G$. It follows that $P\cong G/Q^-$ as principal $K$-bundles over $\mathcal{D}^*$. Expressing the holomorphic tangent bundle $\mathcal{T}_{\mathcal{D}}$ of $\mathcal{D}$ as the restriction of $\mathcal{T}_{\mathcal{D}^*}$ to $\mathcal{D}$, we get $\mathcal{T}_{\mathcal{D}}\cong P'\times_K\mathfrak{g}^{-1,1}$, where $P'=P|_{\mathcal{D}}$ is principal $K$-bundle on $\mathcal{D}$.
\end{proof}
Hence, it is clear that the holomorphic tangent bundle of a Hermitian symmetric space of non-compact type is horizontal.\\
If $X$ admits a uniformizing variation of Hodge structure for a Hodge group $G_0$ of Hermitian type, then we know from the proof of \cite[Proposition 9.1]{simp} (Proposition \ref{simprop} below), that the $\pi_1(X)$-equivariant map $\widetilde{X}\to G_0/K_0=\mathcal{D}$ is an isomorphism. The differential of this map is an isomorphism $\theta:\mathcal{T}_{\widetilde{X}}\cong P'\times_K\mathfrak{g}^{-1,1}$.

\subsection{The tangent bundle of \texorpdfstring{$X=\mathcal{D}/\Gamma$}{}}

We would now like to invert the previous construction. Let $X$ be a smooth projective variety with universal cover $\widetilde{X}$. Suppose there is a holomorphic isomorphism $\phi:\widetilde{X}\to\mathcal{D}$, where $\mathcal{D}$ is a Hermitian symmetric space of noncompact type. Let $\pi:\widetilde{X}\to X$ denote the projection map. Let $G_0=\textrm{Aut}(\mathcal{D})$ be the full automorphism group of $\mathcal{D}$, then $G_0$ is a Hodge group of Hermitian type with maximal compact subgroup $K_0$, and we have $\mathcal{D}=G_0/K_0$.
\begin{lemma}\label{lemuni}
    In this situation, the variety $X$ admits a uniformizing variation of Hodge structure $(P,\theta)$ for the Hodge group $G_0$.
\end{lemma}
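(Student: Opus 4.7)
The plan is to invert the construction from the previous subsection: starting from the deck action of $\pi_1(X)$ on $\mathcal{D}$, I would descend the $G_0$-equivariant bundles of Lemma \ref{lemtan} to $X = \mathcal{D}/\pi_1(X)$.

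First I would set $\Gamma := \pi_1(X)$. Since $\phi$ is a biholomorphism, the deck action of $\Gamma$ on $\widetilde{X}$ transports to a free, properly discontinuous, holomorphic action on $\mathcal{D}$, and since every biholomorphism of $\mathcal{D}$ lies in $G_0 = \mathrm{Aut}(\mathcal{D})$, this is realised by an injective representation $\rho : \Gamma \hookrightarrow G_0$ with $X = \rho(\Gamma) \backslash \mathcal{D}$. The principal $K$-bundle $P'$ of Lemma \ref{lemtan} was built as the restriction to $\mathcal{D}$ of $G/Q^- \to \mathcal{D}^*$, which is naturally $G$-equivariant, so $G_0 \subset G$ acts on $P'$ covering its action on $\mathcal{D}$. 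Restricting this action to $\rho(\Gamma)$ yields a free $\Gamma$-action on $P'$ (freeness on the base forces freeness on $P'$ since $\rho$ is injective), and I define the principal $K$-bundle on $X$ by $P := P'/\Gamma$. The isomorphism $\theta_{\mathcal{D}} : \mathcal{T}_{\mathcal{D}} \to P' \times_K \mathfrak{g}^{-1,1}$ of Lemma \ref{lemtan} is $G_0$-equivariant by construction, so it descends to an isomorphism $\theta : \mathcal{T}_X \to P \times_K \mathfrak{g}^{-1,1}$. The Higgs integrability $[\theta(u), \theta(v)] = 0$ is automatic because $G_0$ is of Hermitian type (Definition \ref{def7}): the Hodge decomposition has only types $(\pm 1, \mp 1)$ and $(0,0)$, so $[\mathfrak{g}^{-1,1}, \mathfrak{g}^{-1,1}] \subset \mathfrak{g}^{-2,2} = 0$.

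For the metric and the flatness I would use the natural inclusion $G_0 \hookrightarrow G/Q^-$, which is injective because $G_0 \cap Q^- = \{e\}$ (its Lie algebra is $\mathfrak{g}_0 \cap \mathfrak{g}^{1,-1} = 0$, as $G_0$ is a real form and $\mathfrak{g}^{1,-1}$ is a strictly complex subspace). Over $\mathcal{D} = G_0/K_0$ its image is a principal $K_0$-subbundle $P'_H \subset P'$. The associated $G_0$-bundle $P'_H \times_{K_0} G_0$ is canonically trivialised as $\mathcal{D} \times G_0$ via $[g_0, h] \mapsto (g_0 K_0,\, g_0 h)$, and the resulting trivial flat connection is $\Gamma$-equivariant for the diagonal action $(x, g) \mapsto (\rho(\gamma) x,\, \rho(\gamma) g)$. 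On $X$ this yields exactly the flat $G_0$-bundle attached to $\rho$ via Theorem \ref{monthm}; taking $P_H := P'_H/\Gamma$ therefore provides a flat metric on $(P, \theta)$, completing the construction of the required uniformizing variation of Hodge structure for $G_0$.

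The main technical obstacle I anticipate lies in the bookkeeping around the Borel embedding: verifying that $G_0 \hookrightarrow G/Q^-$ really does define a $K_0$-subbundle compatible with the right $K$-action on $P'$, and identifying the flat connection coming from the trivialisation over $\widetilde{X}$ with the canonical $G_0$-connection associated to a principal variation of Hodge structure. Both reductions are standard manipulations with reductive homogeneous spaces, but they require unpacking several simultaneous identifications between the $K$-bundle $P'$, the $K_0$-reduction $P'_H$, and the $G_0$-bundle $R_H = P_H \times_{K_0} G_0$.
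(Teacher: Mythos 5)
Your proposal is correct and follows essentially the same route as the paper: descend the homogeneous $K$-bundle and the isomorphism of Lemma \ref{lemtan} along the representation $\pi_1(X)\to G_0=\mathrm{Aut}(\mathcal{D})$, take the tautological $K_0$-bundle $G_0\to G_0/K_0$ as the metric reduction, and descend the flat Maurer--Cartan (Higgs) connection on $G_0\times_{K_0}G_0\cong\mathcal{D}\times G_0$, which is the paper's construction carried out on $\mathcal{D}$ rather than on $\widetilde{X}$ via $\phi$. The single step you defer as standard --- identifying this descended flat connection with the canonical connection $D_H=d_H+\sigma+\bar{\sigma}$ attached to the metric and the Higgs field --- is precisely what the paper settles by invoking \cite[Proposition 13.1.1]{cmsp}, so nothing essential is missing.
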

\begin{proof}
The differential of the holomorphic isomorphism $\phi:\widetilde{X}\cong\mathcal{D}$ is the isomorphism $d\phi:\mathcal{T}_X\cong\mathcal{T}_{\mathcal{D}}$ of holomorphic tangent bundles. From Lemma \ref{lemtan} it follows that $\mathcal{T}_{\widetilde{X}}\cong \widetilde{P}\times_K\mathfrak{g}^{-1,1}$, where $\widetilde{P}$ is a prinicipal $K$-bundle on $\widetilde{X}$. Recall that, by \cite[Lemma-Definition 12.2.3]{cmsp}, we have $\mathcal{T}_{\mathcal{D}}\cong G_0\times_{K_0}\mathfrak{g}^{-1,1}$. Thus it follows that $\mathcal{T}_{\widetilde{X}}\cong\widetilde{P}'\times_{K_0}\mathfrak{g}^{-1,1}$, where $\widetilde{P}'=\phi^*G_0$ is a principal $K_0$-bundle on $\widetilde{X}$. Hence, we may view $\widetilde{P}'$ as a reduction in structure group of $\widetilde{P}$ from $K$ to $K_0$.\\
There is natural principal $G_0$ bundle $\widetilde{R}$ on $\mathcal{D}$ called the \emph{Higgs principal bundle} (see \cite[Definition 12.4.1]{cmsp}). It is given by 
\begin{align*}
    \widetilde{R}=G_0\times_{K_0}G_0\cong\mathcal{D}\times G_0,
\end{align*}
where the isomorphism is a $G_0$-equivariant map given by $[g,g']\mapsto([g],gg')$. The \emph{Higgs connection} $\omega_H$ on $\widetilde{R}$ is obtained by pulling back the Maurer-Cartan form on $G_0$ to $\mathcal{D}\times G_0$, and $\omega_H$ is flat. Thus we get a principal $G_0$-bundle $\widetilde{R}'$ on $\widetilde{X}$ given by $\widetilde{R}'=\phi^*\widetilde{R}=\phi^*(G_0\times_{K_0}G_0)\cong \widetilde{P}'\times_{K_0}G_0$. The connection $\omega'_H=\phi^*\omega_H$ on $\widetilde{R}'$ is flat, and there is a trivialisation $\widetilde{R}'\cong\widetilde{X}\times G_0$.
The fundamental group $\pi_1(X)$ acts on $\widetilde{X}\cong\mathcal{D}$ by holomorphic automorphisms, so we have a representation $\sigma:\pi_1(X)\to G_0$. The isomorphism $\phi:\widetilde{X}\cong G_0/K_0$ is equivariant under $\sigma$ i.e., we have $\phi(\gamma x)=\sigma(\gamma)\phi(x)$ for all $x\in\widetilde{X}$ and $\gamma\in\pi_1(X)$. It follows that the differential $d\phi$ is also equivariant under $\sigma$. The action of $\pi_1(X)$ on $\widetilde{X}$ lifts to a left action of $\pi_1(X)$ on the principal $K$-bundle $\widetilde{P}$. Thus there is a left action of $\pi_1(X)$ on the associated bundle $\mathcal{T}_{\widetilde{X}}$, and we have $\mathcal{T}_X\cong\mathcal{T}_{\widetilde{X}}/\pi_1(X)$. By the equivariance of $d\phi$ under the representation $\sigma$, we have an isomorphism $\theta:\mathcal{T}_X\cong P\times_K\mathfrak{g}^{-1,1}$, where $P=\widetilde{P}/\pi_1(X)$ is a principal $K$-bundle on $X$. The pair $(P,\theta)$ is a uniformizing system of Hodge bundles on $X$ for the Hodge group $G_0$. Since we also have $\mathcal{T}_{\widetilde{X}}\cong\widetilde{P}'\times_{K_0}\mathfrak{g}^{-1,1}$, it follows that $\mathcal{T}_X\cong P'\times_{K_0}\mathfrak{g}^{-1,1}$, where $P'=\widetilde{P}'/\pi_1(X)$ is a principal $K_0$ bundle on $X$, and $P'\times_{K_0}K\cong P$. Note that $P'$ is a metric for $(P,\theta)$.\\
Recall that $\widetilde{R}'=\widetilde{P}'\times_{K_0}G_0$ is a flat principal $G_0$-bundle on $\widetilde{X}$, and we can associate to it the system of Hodge bundles $\widetilde{E}=\widetilde{R}'\times_{G_0}\mathfrak{g}$ via the adjoint action of $G_0$ on $\mathfrak{g}$. We can view $\widetilde{E}$ as a $K_0$-bundle $\widetilde{E}'=\widetilde{P}'\times_{K_0}\mathfrak{g}$. As a $K_0$-bundle $\widetilde{E}'$ decomposes as a direct sum $\widetilde{E}'=\bigoplus_{i\in\{-1,0,1\}}\widetilde{P}'\times_{K_0}\mathfrak{g}^{i,-i}$. The flat connection $\omega_H$ on $\widetilde{R}'$ induces a flat connection $\omega'_H$ on the associated bundle $\widetilde{E}$. From \cite[Proposition 13.1.1]{cmsp}, there is a decomposition $\omega'_H=\widetilde{d}_H+\sigma+\Bar{\sigma}$, where $\widetilde{d}_H$ is the Chern connection for the Hodge metric on $\widetilde{E}'$, $\sigma\in\mathcal{A}^{1,0}(\textrm{End}(\widetilde{E}))$, and $\Bar{\sigma}$ is the adjoint of $\sigma$ with respect to the Hodge metric. By slight abuse of notation, let $\omega_H=\widetilde{d}_H+\sigma+\Bar{\sigma}$ be the associated splitting as principal connections.   \\
There is an action of $\pi_1(X)$ on $\widetilde{R}$ by automorphisms, which comes from the representation $\sigma:\pi_1(X)\to G_0$. This action is described explicitly in the proof of \cite[Theorem 2.9]{morita}. The quotient $R'=(\widetilde{P}'\times_{K_0} G_0)/\pi_1(X)=P'\times_{K_0}G_0$ is a flat principal $G_0$-bundle on $X$ by the correspondence of Theorem \ref{monthm}.\\ 
The system of Hodge bundles $E=P\times_K\mathfrak{g}$ on $X$ admits a Hermitian metric $H$ corresponding to the reduction of structure group $P'$ of $P$ from $K$ to $K_0$. Let $d'_H$ be the $K_0$-connection on $P'$ which induces the Chern connection on $E$ for $H$. Let $\theta'$ denote the connection on $E$ corresponding to $\theta$, $\Bar{\theta}'$ the adjoint of $\theta'$ with respect to $H$, and $\sigma'$, $\Bar{\sigma}'$ the $G_0$ connections on $R'$ which induce $\theta'$, $\Bar{\theta}'$ respectively. Then $D_H=d'_H+\sigma'+\Bar{\sigma}'$ is a $G_0$-connection on $R'$, and the connections $D_H,d'_H,\sigma',\Bar{\sigma}'$ pull back to $\widetilde{D}_H,\widetilde{d}_H,\sigma,\Bar{\sigma}$ respectively. Recall that $\widetilde{D}_H$ is a flat connection on $\widetilde{R}'$. Since flatness can be checked locally, and $X$ and $\widetilde{X}$ are locally diffeomorphic, it follows that $D_H$ is a flat connection on $R'$. \\
Thus, $(P,\theta)$ together with the metric $P'$ is a principal variation of Hodge structure on $X$ for the Hodge group $G_0$. It is in fact a uniformizing variation because the differential $\theta$ is an isomorphism.
\end{proof}

\subsection{Classical results of Simpson}

In the case when $X$ is a smooth compact complex manifold, Simpson derives the following necessary and sufficient conditions for $X$ to be uniformized by a Hermitian symmetric space $\mathcal{D}$ of noncompact type.
\begin{proposition}[{\cite[Proposition 9.1]{simp}}]\label{simprop}
Let $X$ be a smooth compact complex manifold and let $\widetilde{X}$ be the universal cover of $X$. Then $\widetilde{X}$ is isomorphic to the bounded symmetric domain $\mathcal{D}$ if and only if $X$ admits a uniformizing variation of Hodge structure for some Hodge group $G_0$ with $\mathcal{D}=G_0/K_0$.
\end{proposition}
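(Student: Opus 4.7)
The reverse implication has essentially been proven as Lemma \ref{lemuni}: if there is a holomorphic isomorphism $\widetilde{X}\cong \mathcal{D}$, then $X$ carries a uniformizing variation of Hodge structure for $G_0=\textrm{Aut}(\mathcal{D})$. Thus the main content of the proposition is the forward direction, where one produces a biholomorphism $\widetilde{X}\cong \mathcal{D}$ starting from a uniformizing variation of Hodge structure $(P,\theta)$ with flat metric $P_H$.

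The construction of the candidate map proceeds exactly as in the paragraph preceding the statement. The flat principal $G_0$-bundle $R_H=P_H\times_{K_0}G_0$ with flat connection $D_H$ corresponds, via Theorem \ref{monthm}, to a monodromy homomorphism $\sigma:\pi_1(X)\to G_0$. Pulling back along $\pi:\widetilde{X}\to X$, the flatness and simple connectedness of $\widetilde{X}$ give a trivialisation $\widetilde{R}_H=\widetilde{P}_H\times_{K_0}G_0\cong \widetilde{X}\times G_0$. A global section of $\widetilde{R}_H$ corresponds to a $K_0$-equivariant map $\varphi:\widetilde{P}_H\to G_0$, which descends to a holomorphic map $\Phi:\widetilde{X}\to \mathcal{D}=G_0/K_0$ that is $\sigma$-equivariant.

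Next I would argue that $\Phi$ is a local biholomorphism. Under the identifications $\mathcal{T}_{\widetilde{X}}\cong \pi^{*}(P\times_K\mathfrak{g}^{-1,1})$ (available because $(P,\theta)$ is uniformizing, so $\theta$ is a sheaf isomorphism) and $\mathcal{T}_{\mathcal{D}}\cong G_0\times_{K_0}\mathfrak{g}^{-1,1}$ (Lemma \ref{lemtan} together with the preceding discussion), the differential $d\Phi$ is identified, up to the $K_0$-twist coming from $\varphi$, with the pullback of $\theta$. Since $\theta$ is an isomorphism of sheaves, $d\Phi$ is pointwise invertible, so $\Phi$ is a holomorphic local isomorphism.

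The main obstacle is then to upgrade $\Phi$ from a local biholomorphism to a global isomorphism. Here I would invoke the complete $G_0$-invariant Hermitian (Bergman) metric $g_{\mathcal{D}}$ on the bounded symmetric domain $\mathcal{D}$. Under the identification $\mathcal{T}_X\cong P\times_K\mathfrak{g}^{-1,1}$, the reduction $P_H$ induces a Hermitian metric on $\mathcal{T}_X$, and its $\pi_1(X)$-invariant pullback to $\mathcal{T}_{\widetilde{X}}$ agrees with $\Phi^{*}g_{\mathcal{D}}$ by the very construction of $\Phi$ from the Higgs principal bundle. Since $X$ is compact, the metric descended to $X$ is complete, hence so is its pullback to $\widetilde{X}$. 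A local isometry from a complete Riemannian manifold is necessarily a covering map, so $\Phi$ is a covering. As $\mathcal{D}$ is simply connected, $\Phi$ must be a biholomorphism, yielding $\widetilde{X}\cong \mathcal{D}$ and completing the proof.
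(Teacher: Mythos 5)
The paper does not actually reprove this statement---it is quoted from Simpson's Proposition 9.1---so the relevant comparison is with Simpson's classical argument and with the surrounding material in Section 3 (the developing-map construction and Lemma \ref{lemuni}), and your proposal follows exactly that route: Lemma \ref{lemuni} for the reverse direction, and for the forward direction the $\sigma$-equivariant map $\widetilde{X}\to G_0/K_0$ obtained from the flat structure, whose differential is identified with $\theta$, upgraded to a biholomorphism via completeness of the pulled-back invariant metric (it descends to the compact $X$), the local-isometry-covering criterion, and simple connectedness of $\mathcal{D}$. This is correct and is essentially the same proof; the two steps you state without detail (holomorphy of $\Phi$ and the identification of $\Phi^{*}g_{\mathcal{D}}$ with the metric induced by $P_H$ on $P\times_K\mathfrak{g}^{-1,1}$) are standard ingredients of that same classical argument.
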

A more algebraic formulation of the above Proposition, which is more usefult in practice, is the following result. 

\begin{theorem}[{\cite[Theorem 2]{simp}}]\label{simpthm}
Let $X$ be a smooth compact complex manifold. Then $\widetilde{X}\cong\mathcal{D}$ if and only if there is a uniformizing system of Hodge bundles $(P,\theta)$ for a Hodge group $G_0$ of Hermitian type corresponding to $\mathcal{D}$, such that $(P,\theta)$ is stable and $c_2(P\times_K\mathfrak{g})\cdot[K_X]^{n-2}=0$.
\end{theorem}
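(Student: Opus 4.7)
The statement is an equivalence, so I would split it into two implications and combine the geometric ingredient of Lemma \ref{lemuni}/Proposition \ref{simprop} with Simpson's nonabelian Hodge correspondence for principal Higgs bundles.

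\smallskip

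\noindent\textbf{Forward direction.} Assume $\widetilde{X}\cong\mathcal{D}$. Taking $G_0=\textrm{Aut}(\mathcal{D})^0$ (or any Hodge group of Hermitian type with $\mathcal{D}=G_0/K_0$), Lemma \ref{lemuni} furnishes a uniformizing variation of Hodge structure $(P,\theta)$ on $X$, i.e., $(P,\theta)$ comes equipped with a metric $P'\subset P$ whose induced $G_0$-connection $D_H$ on $R'=P'\times_{K_0}G_0$ is flat. The adjoint Higgs bundle $E=P\times_K\mathfrak{g}\cong R'\times_{G_0}\mathfrak{g}$ therefore underlies a flat complex vector bundle, so Chern--Weil forces all of its rational Chern classes to vanish; in particular the required numerical identity $c_2(P\times_K\mathfrak{g})\cdot[K_X]^{n-2}=0$ holds. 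For stability of $(P,\theta)$, the Hodge metric $H$ is Hermitian--Yang--Mills on $(E,\theta)$ (this is one of Simpson's foundational observations for VHS), which gives polystability; the simplicity/irreducibility of $G_0$ together with Zariski density of the uniformizing monodromy representation in $G_0$ rules out a nontrivial direct-sum decomposition of $(E,\theta)$, upgrading polystability to stability in the sense of Definition \ref{streg}.

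\smallskip

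\noindent\textbf{Reverse direction.} Assume $(P,\theta)$ is a uniformizing system of Hodge bundles for some Hodge group $G_0$ of Hermitian type with $\mathcal{D}=G_0/K_0$, that $(P,\theta)$ is stable, and that $c_2(P\times_K\mathfrak{g})\cdot[K_X]^{n-2}=0$. Since $\theta\colon\mathcal{T}_X\xrightarrow{\sim}P\times_K\mathfrak{g}^{-1,1}$ is an isomorphism and $K_X$ is the determinant of the dual, ampleness of $K_X$ is automatic once $X$ is uniformized by $\mathcal{D}$; here we use $K_X$ only as the polarisation needed to talk about slopes. Apply Simpson's principal Higgs bundle correspondence (the Kobayashi--Hitchin/nonabelian Hodge theorem for $K$-principal Higgs bundles): a stable $(P,\theta)$ whose associated Higgs bundle satisfies the degree-zero condition and the vanishing of the second Chern character against $[K_X]^{n-2}$ admits a Hermitian--Yang--Mills reduction of structure group from $K$ to $K_0$, and this reduction upgrades $(P,\theta)$ to a principal variation of Hodge structure for $G_0$. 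Because $\theta$ is an isomorphism, this principal VHS is uniformizing in the sense of Definition \ref{def8}, and Proposition \ref{simprop} then yields $\widetilde{X}\cong\mathcal{D}$.

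\smallskip

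\noindent\textbf{Main obstacle.} The routine work is on the forward side (flatness $\Rightarrow$ Chern classes vanish; stability from irreducibility of the adjoint representation and Zariski density of monodromy). The genuinely hard step is the reverse direction, namely producing the Hermitian--Yang--Mills metric in a form compatible with the system-of-Hodge-bundles decomposition, so that the resulting flat connection is not just a generic Higgs-polystable object but actually a principal variation of Hodge structure for the prescribed Hodge group $G_0$. This is precisely what Simpson's theorems (Theorems 1--3 of \cite{simp}) establish, and we invoke them as a black box; our argument then merely threads these inputs through Proposition \ref{simprop}. This is also the step whose klt generalisation will demand the machinery of \cite{gkpt,gkpt2} in the subsequent sections of the paper.
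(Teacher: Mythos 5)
First, note that the paper itself offers no proof of Theorem \ref{simpthm}: it is quoted verbatim as Simpson's Theorem 2 from \cite{simp} and used as a black box in the later sections, so your sketch is really a reconstruction of Simpson's own argument (Hermitian--Yang--Mills existence for the adjoint Higgs bundle, flatness, and then the analogue of Proposition \ref{simprop}, resp.\ Lemma \ref{lemuni} for the converse), and at the level of architecture it is the right one.

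There is, however, a genuine error in your forward direction. You claim that polystability of $(E,\theta)=P\times_K\mathfrak{g}$ can be upgraded to stability ``in the sense of Definition \ref{streg}'' using simplicity of $G_0$ and Zariski density of the monodromy. A Hodge group of Hermitian type need not be simple: for the polydisk $\mathbb{H}^n$, $n\ge 2$, one has $\mathfrak{g}=\mathfrak{sl}(2,\mathbb{C})^{\oplus n}$, and $E$ splits canonically as the direct sum of the $n$ degree-zero subsystems $P\times_K\mathfrak{g}_i$ along the simple ideals of $\mathfrak{g}$; this decomposition is preserved by the adjoint action, so no density argument for the lattice removes it, and $E$ is polystable but never stable as a Higgs bundle. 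The paper's own Proposition \ref{propstab} makes exactly this distinction, asserting stability of $P\times_K\mathfrak{g}$ only when $\mathfrak{g}$ is simple. The phrase ``$(P,\theta)$ is stable'' in Theorem \ref{simpthm} is Simpson's notion of stability for the principal (uniformizing) bundle, which, as the paper records immediately after the statement via \cite[Corollary 9.4]{simp}, is equivalent to polystability of $P\times_K\mathfrak{g}$ as a Higgs bundle -- precisely what your Hermitian--Yang--Mills argument already delivers. So the fix is to delete the upgrade step and interpret the stability hypothesis/conclusion in the principal sense rather than via Definition \ref{streg}; on your reading the forward implication would itself be false for polydisk quotients. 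Two smaller points: in the reverse direction the remark that ampleness of $K_X$ is ``automatic once $X$ is uniformized by $\mathcal{D}$'' is circular as written (that is what you are proving); you only need $[K_X]$ -- in Simpson's formulation an arbitrary K\"ahler class -- as the polarization defining slopes. And before invoking the correspondence you should note the vanishing $c_1(P\times_K\mathfrak{g})\cdot[K_X]^{n-1}=0$, which is automatic because the adjoint bundle of a semisimple group has trivial determinant. With these corrections your outline coincides with how Simpson proves the theorem that the paper imports.
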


From \cite[Corollary 9.4]{simp}, it follows that $(P,\theta)$ being stable is equivalent to the system of Hodge bundles $P\times_K\mathfrak{g}$ being polystable with respect to $K_X$ as a Higgs bundle. Theorem \ref{simpthm} turns out to be useful in formulating explicit necessary and sufficient conditions for a complex projective variety with klt singularities to be uniformized by each of the four classical Hermitian symmetric spaces $\mathcal{D}$ of non-compact type. 

\begin{remark}\label{remgp}
(a) To determine necessary conditions for a projective variety $X$ over $\mathbb{C}$ to be unifomized by a Hermitian symmetric space $\mathcal{D}$ of non-compact type, we must fix a Hodge group $G_0$ associated to $\mathcal{D}$. In general, choosing $G_0$ to be the connected component $\textrm{Aut}^0(\mathcal{D})$ of the automorphism group of $\mathcal{D}$ will not give necessary condiditons.\\ 
(b) However, we can choose the $G_0$ to be a cover of $Aut(\mathcal{D})$, such that the kernel of the covering map $\varphi:G_0\to\textrm{Aut}(\mathcal{D})$ is a discrete central subgroup of $G_0$. To why we can make this choice, note that $G_0$ and $Aut(\mathcal{D})$ have the same Lie algebra, and the isomorphism $G_0/K_0\cong Aut(\mathcal{D})/M_0$ is compatible with $\varphi$. Here $K_0$ and $M_0$ denote maximal compact subgroups of $G_0$ and $Aut(\mathcal{D})$ respectively. This means that the image of $K_0$ via $\varphi$ is $M_0$, and the kernel of $\varphi|_{K_0}$ is a discrete central subgroup $K_0$. Let $K$ and $M$ denote the complexifications of $K_0$ and $M_0$ respectively. Then, $M$ is a quotient of $K$ by a discrete central subgroup. Note that $K$ acts on the complexified Lie algebra $\mathfrak{g}$ of $G_0$ via the adjoint representation $Ad: K\to Aut(\mathfrak{g})$, $k\mapsto Ad_k=kXk^{-1}$, $X\in\mathfrak{g}$. The kernel of the adjoint representation $Ad$ is the center of $K$. It follows that the action of $K$ on $\mathfrak{g}$ factors through $M$. By Lemma \ref{lemuni}, the tangent bundle of $X$ can be expressed as $\mathcal{T}_X\cong P\times_M\mathfrak{g}^{-1,1}$, where $P$ is a principal $M$-bundle on $X$. Since the action of $K$ on $\mathfrak{g}^{-1,1}$ factors through $M$, the tangent bundle of $X$ can also be expressed as $\mathcal{T}_X\cong P'\times_K\mathfrak{g}^{-1,1}$, where $P'\cong P\times_MK$ is a principal $K$- bundle on $X$. 
\end{remark}

\section{Extending Simpson's result to the klt case}

In this section, our goal is to prove Theorem \ref{genthm}. Henceforth, we work in the klt setting instead of the smooth setting. We first make some observations which will help us. 

\subsection{Auxiliary remarks}

The following observation about the semistability of the tangent sheaf of a variety of general type is a slight generalization of a result of H.Guenancia (\cite[Theorem A]{guen}). It allows us to do away with the assumption that the tangent sheaf has negative degree, which was made by Simpson in his uniformisation result (\cite[Corollary 9.7]{simp}). 

\begin{proposition}[\cite{gkpt}, Theorem 7.1]\label{tanprop}
Let $X$ be a projective, klt variety of general type whose canonical divisor $K_X$ is nef. Then the sheaves $\mathcal{T}_X$ and $\Omega^{[1]}_X$ are semistable with respect to $K_X$.
\end{proposition}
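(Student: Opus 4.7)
Since $\mathcal{T}_X=(\Omega^{[1]}_X)^{*}$ as reflexive sheaves on $X$, and slopes of reflexive sheaves are compatible with dualisation (in the sense $\mu_H(\mathcal{E}^{*})=-\mu_H(\mathcal{E})$), $K_X$-semistability of $\mathcal{T}_X$ is equivalent to $K_X$-semistability of $\Omega^{[1]}_X$. I would therefore concentrate on $\Omega^{[1]}_X$ and argue by contradiction: assume there is a saturated subsheaf $\mathcal{F}\subset \Omega^{[1]}_X$ with $\mu_{K_X}(\mathcal{F})>\mu_{K_X}(\Omega^{[1]}_X)=\frac{1}{n}K_X^{\,n}$. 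Writing $\mathcal{Q}:=\Omega^{[1]}_X/\mathcal{F}$ for the torsion-free quotient, the inequality is equivalent to $\mu_{K_X}(\mathcal{Q})<\frac{1}{n}K_X^{\,n}$, so the goal is to rule out the existence of such a quotient.

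The key input is the positivity of the cotangent sheaf of klt varieties with pseudoeffective canonical class, an extension to the singular setting of the classical theorem of Miyaoka and its refinement by Campana--P\u aun. In concrete terms, I would invoke the klt version of generic semipositivity: if $K_X$ is pseudoeffective (which is implied by nef), then for every torsion-free quotient $\mathcal{Q}$ of $(\Omega^{[1]}_X)^{\otimes m}$ the determinant $\det(\mathcal{Q})$ is pseudoeffective. To access this in the singular setting I would pass to a strong log resolution $\pi\colon \tilde X\to X$, apply the GKKP extension theorem identifying $\pi_*\Omega^{1}_{\tilde X}$ with $\Omega^{[1]}_X$, and push back the positivity statement on $\tilde X$ using $K_{\tilde X}=\pi^{*}K_X+E$ with $E$ effective exceptional (valid because $X$ is klt).

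To upgrade the mere pseudoeffectivity $c_1(\mathcal{Q})\cdot[K_X]^{n-1}\geq 0$ to the sharper bound needed for $\mu_{K_X}$-semistability, I would exploit that $X$ is of general type, so $K_X$ is both nef and big. Under these hypotheses the Bogomolov--McQuillan type result (as refined by Campana--P\u aun and extended to the klt setting by Graf and others) prevents any foliation $\mathcal{F}^{*}\subset \mathcal{T}_X$ from having slope strictly greater than $\mu_{K_X}(\mathcal{T}_X)$, since such a foliation would force uniruledness of $X$, contradicting the fact that varieties of general type with nef canonical class are non-uniruled. The saturated maximal destabilising subsheaf of $\mathcal{T}_X$ is automatically closed under Lie bracket (Bogomolov--Miyaoka), so this contradiction applies and the semistability assertion follows.

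The hard part of the argument is precisely this last step: verifying that the maximal destabilising subsheaf in the klt setting is stable under the Lie bracket coming from the reflexive tangent sheaf, and that the Bogomolov--McQuillan/Campana--P\u aun mechanism producing uniruledness from a positive-slope foliation continues to work for reflexive foliations on klt varieties. Once this technical input is in place, the deduction of semistability of $\Omega^{[1]}_X$ (and hence of $\mathcal{T}_X$) from nefness of $K_X$ and non-uniruledness of $X$ is formal. Alternatively, one can bypass the algebraic route and use singular K\"ahler--Einstein metrics of Eyssidieux--Guedj--Zeriahi type on $X$, whose curvature directly yields the required slope inequality on $X_{\mathrm{reg}}$ and extends across the singular locus via reflexivity, mirroring Guenancia's analytic proof of the smooth case.
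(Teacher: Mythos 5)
First, a remark on the comparison: the paper does not prove this statement at all — it is quoted verbatim as \cite[Theorem 7.1]{gkpt}, which in turn is a slight generalization of Guenancia's Theorem A, proved analytically via (twisted, approximate) singular K\"ahler--Einstein metrics in the sense of Eyssidieux--Guedj--Zeriahi. So your ``alternative'' route at the end of the proposal is in fact the route the cited sources actually take, while your primary, algebraic route contains a genuine gap.

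The gap is quantitative. Since $K_X$ is nef and big, $\mu_{K_X}(\mathcal{T}_X)=-\tfrac{1}{n}K_X^n<0$, so semistability of $\mathcal{T}_X$ requires ruling out every subsheaf $\mathcal{F}\subset\mathcal{T}_X$ with $\mu_{K_X}(\mathcal{F})>-\tfrac{1}{n}K_X^n$, in particular subsheaves whose slope lies in the interval $\left(-\tfrac{1}{n}K_X^n,\,0\right]$. The Campana--P\u aun/Miyaoka generic semipositivity statement you invoke (pseudoeffectivity of $\det\mathcal{Q}$ for quotients $\mathcal{Q}$ of $\Omega^{[1]\otimes m}_X$, valid since $K_X$ is pseudoeffective and $X$ is non-uniruled) only yields $\mu_{K_X}(\mathcal{F})\le 0$ for all $\mathcal{F}\subset\mathcal{T}_X$, i.e.\ $\mu_{\max}(\mathcal{T}_X)\le 0$. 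Likewise, the Bogomolov--McQuillan mechanism (maximal destabilizing subsheaf is a foliation, positive slope along a movable class forces rationally connected leaves, hence uniruledness) needs \emph{strictly positive} slope to start: for the bracket-closedness one needs $\mu_{\min}(\mathcal{F})>0$ (or at least $2\mu_{\min}(\mathcal{F})>\mu_{\max}(\mathcal{T}_X/\mathcal{F})$), and the uniruledness conclusion needs positivity of $\mathcal{F}$ restricted to movable curves. A hypothetical destabilizer of slope, say, $-\tfrac{1}{2n}K_X^n<0$ is untouched by either input, so ``the deduction of semistability \ldots is formal'' is not justified; this gap between generic semipositivity and genuine semistability is exactly where the general-type hypothesis and the K\"ahler--Einstein input are needed. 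Your analytic fallback is the correct strategy, but as stated it also glosses over the real work: $K_X$ is only nef, so one must work with solutions of the Monge--Amp\`ere equation for $K_X+\varepsilon A$ ($A$ ample), derive the slope inequality for each $\varepsilon$ by integrating the curvature/trace estimate over $X_{reg}$ against a degenerate metric (which requires cutoff and integrability arguments near $X_{sing}$, not just ``reflexivity''), and then pass to the limit $\varepsilon\to 0$; this is precisely the content of Guenancia's proof on which \cite[Theorem 7.1]{gkpt} rests.
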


Note that by \cite[Lemma 2.26]{naht}, this is equivalent to the vector bundles $\mathcal{T}_{X_{reg}}$ and $\Omega^1_{X_{reg}}$ on $X_{reg}$ being semistable with respect to $K_X$.\\
\\
An important observation due to A. Langer is that the stability of a system of Hodge sheaves is equivalent to the stability of the underlying Higgs sheaf. This was first proved for systems of Hodge sheaves over a smooth projective variety $X$ (see \cite[Proposition 8.1]{langer}). The proof uses that systems of Hodge sheaves are fixed points of the $\mathbb{C}^*$-action on the moduli space of Higgs sheaves (see \cite[Lemma 4.1]{simp2}) on $X$. Moreover, \cite[Lemma 6.8]{simpmod2} says that a torsion free Higgs sheaf $E$ on $X$ is the same thing as a pure coherent sheaf $\mathcal{E}$ of dimension $n=\textrm{dim}(X)$ on a projective completion $Z$ of the cotangent bundle, such that the support of $\mathcal{E}$ does not meet the divisor at infinity. Then $\mathcal{E}$ being a $\mathbb{C}^*$-fixed point implies that the Quot scheme $Quot_Z(\mathcal{E})$ inherits a $\mathbb{C}^*$- action. Since $Quot_Z(\mathcal{E})$ is projective, the limit of the orbit of any point $\mathcal{F}\in Quot_Z(\mathcal{E})$ under the $\mathbb{C}^*$-action exists in $Quot_Z(\mathcal{E})$, and corresponds to a quotient system of Hodge sheaves of $E$ on $X$ that has the same numerical invarants as $\mathcal{F}$. \\
This result was generalized to the setting where $X$ is normal and projective (see \cite[Corollary 3.5]{langer2}). It is shown that even in this more general setting, a torsion free Higgs sheaf is a fixed point of the $\mathbb{C}^*$-action if and only if it is a system of Hodge sheaves. To prove the result it is sufficient to show that the maximally destabilizing Higgs subsheaf of a system of Hodge sheaves is a system of Hodge sheaves. This follows from the fact that the maximally destabilizing Higgs subsheaf is unique, so it must be a fixed point of the $\mathbb{C}^*$-action.\\    
We give the following independent proof of this result for systems of Hodge sheaves on the smooth locus of a projective variety $X$ with klt singularities. 

\begin{lemma}\label{lemsta}
Let $X$ be a projective variety with klt singularities. Then the stability conditions for a torsion free system of Hodge sheaves on the smooth locus $X_{reg}$ and the stability conditions of the underlying Higgs sheaf in the sense of Definition \ref{streg}, are equivalent.
\end{lemma}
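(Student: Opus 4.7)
The plan is to prove both implications in the equivalence. One direction is immediate: every sub-system of Hodge sheaves is in particular a Higgs subsheaf, so (semi)stability of $(E,\theta)$ as a Higgs sheaf on $X_{reg}$ at once implies (semi)stability as a system of Hodge sheaves. For the converse, given any Higgs subsheaf $F \subset E$ with $0 < \operatorname{rank}(F) < \operatorname{rank}(E)$, I would produce a sub-system of Hodge sheaves $F' \subset E$ of the same rank and the same $H$-slope, so that the (semi)stability hypothesis applied to $F' \subsetneq E$ would yield $\mu_H(F) = \mu_H(F') \le \mu_H(E)$ (resp.\ strictly).

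The construction of $F'$ is via the associated graded of an obvious filtration. With the convention of Definition \ref{def4} that $\theta$ decreases the first Hodge index $p$ by one, the subsheaves
\begin{align*}
E_{\le k} := \bigoplus_{p \le k,\,q} E^{p,q}
\end{align*}
form an increasing filtration of $E$ by sub-systems of Hodge sheaves. Setting $F_k := F \cap E_{\le k}$ yields a filtration of $F$ by Higgs subsheaves, and the associated graded
\begin{align*}
F' := \bigoplus_k F_k / F_{k-1}
\end{align*}
embeds into $\bigoplus_k E_{\le k}/E_{\le k-1} = E$ respecting the Hodge decomposition. Because $\theta(F_k) \subseteq F_{k-1} \otimes \Omega^1_{X_{reg}}$, the Higgs field on $E$ restricts to a degree-lowering Higgs field on $F'$, which is therefore genuinely a sub-system of Hodge sheaves. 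The identities $\operatorname{rank}(F') = \operatorname{rank}(F)$ and $c_1(F) = c_1(F')$ on $X_{reg}$ both follow immediately from the short exact sequences $0 \to F_{k-1} \to F_k \to F_k/F_{k-1} \to 0$.

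The main point that needs care, and which I expect to be the only nontrivial obstacle, is checking that the slope equality $\mu_H(F) = \mu_H(F')$ survives the pushforward $j_*$ to the singular variety $X$ that is built into Definition \ref{slreg}. Since $X$ is normal so $X \setminus X_{reg}$ has codimension at least two, reflexive determinants on $X_{reg}$ extend uniquely to $X$ and the additivity of $c_1$ in the above short exact sequences transfers to $N^1(X)_\mathbb{Q}$, so this step should be routine rather than delicate. The polystable case would then follow from the semistable and stable cases by observing that the canonical socle decomposition of a polystable Higgs sheaf on $X_{reg}$ is preserved by the natural $\mathbb{C}^*$-action on the moduli whose fixed points are exactly the systems of Hodge sheaves (as recalled just above from Langer), so the stable Higgs summands can be regrouped into stable sub-systems of Hodge sheaves.
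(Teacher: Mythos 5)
Your proposal is correct and follows essentially the same route as the paper's own proof: the filtration $F_k=F\cap E_{\le k}$ and its associated graded $F'=\bigoplus_k F_k/F_{k-1}$, embedded into $E$ via the projections to the graded pieces, is exactly the construction used there, with the same short-exact-sequence argument giving $\operatorname{rank}(F')=\operatorname{rank}(F)$ and $c_1(F')=c_1(F)$. The extra remarks on pushing $c_1$ forward to $X$ and on the polystable case are consistent with the paper's setup and do not change the argument.
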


\begin{proof}
Let $E=\bigoplus_{p=0}^nE^p$ be a torsion free system of Hodge sheaves on $X_{reg}$ with Higgs field $\theta$, i.e., we have $\theta:E^p\to E^{p-1}\otimes\Omega^1_{X_{reg}}$ for all $0\le p\le n$. Let $F\subset E$ be a Higgs subsheaf. The idea is to construct a subsystem of Hodge sheaves $F'=\bigoplus_{p=0}^nF'^p$ of $E$ such that $F$ and $F'$ have the same rank and first Chern class. For each $i$, define $G_i=F\cap\bigoplus_{p<i}E^p$, where $0\le i\le n+1$. So for example $G_0=0$, $G_1=F\cap E_0$, and $G_{n+1}=F\cap\bigoplus_{p<n+1}E^p=F$. Thus the $G_i's$ give a filtration of $F$
\begin{align*}
    0=G_0\subset G_1\subset\dots\subset G_n\subset G_{n+1}=F.
\end{align*}
Since $\theta(F)\subset F\otimes\Omega^1_{X_{reg}}$, and $\theta(\bigoplus_{p<i}E^p)\subset(\bigoplus_{p<i-1}E^p)\otimes\Omega^1_{X_{reg}}$, it follows that $\theta(G_i)\subset G_{i-1}\otimes\Omega^1_{X_{reg}}$ for all $0\le i\le n+1$. Let $\{F'^i\}_{i=0}^n$ be the quotients of this filtration, i.e., $F'^i=G_{i+1}/G_i$ for all $i$. Consider the sequence of maps
\begin{align*}
    G_{i+1}\hookrightarrow\bigoplus_{p<i+1}E^p\to E^i
\end{align*}
where the first map is the inclusion and the second is projection. The kernel of the composed map is $G_i$, so the image of $G_{i+1}$ in $E^i$ is isomorphic to the quotient $F'^i=G_{i+1}/G_i$. Thus $F'^i\subset E^i$ for all $0\le i\le n$. Moreover, since $\theta(G_{i})\subset G_{i-1}\otimes\Omega^1_{X_{reg}}$, and tensoring with $\Omega^1_{X_{reg}}$ is exact, it follows that $\theta(F'^i)\subset F'^{i-1}\otimes\Omega^1_{X_{reg}}$ for all $0\le i\le n$. Hence $F'=\bigoplus_{p=0}^nF'^p\subset\bigoplus_{p=0}^nE^p$ is a subsystem of Hodge sheaves. \\
To see that $F$ and $F'$ have the same numerical invariants, we look at the series of short exact sequences
\begin{align*}
    &0\to G_0=0\to G_1\to F'^0\to0\\
    &0\to G_1\to G_2\to F'^1\to0\\
    &\vdots\\
    &0\to G_{n-1}\to G_n\to F'^{n-1}\to0\\
    &0\to G_n\to G_{n+1}=F\to F'^n\to0
\end{align*}
From the first exact sequence it follows that $\textrm{rank}(G_1)=\textrm{rank}(F'^0)$. This implies that $\textrm{rank}(G_2)=\textrm{rank}(F'^0)+\textrm{rank}(F'^1)$, and repeating this gives $\textrm{rank}(G_i)=\textrm{rank}(F'^0)+\dots+\textrm{rank}(F'^{i-1})$. From the last exact sequence we get $\textrm{rank}(F)=\textrm{rank}(F'^0)+\dots+\textrm{rank}(F'^n)=\textrm{rank}(F')$. The same computation holds for first Chern classes, so we have $c_1(F)=c_1(F')$. 
\end{proof}

The above result is true also for reflexive systems of Hodge sheaves on a normal variety in the sense of \cite[Secton 4]{lan}. More precisely, the stability conditions for a reflexive system of Hodge sheaves and the underlying reflexive Higgs sheaf in the sense of \cite[Section 4]{lan} are equivalent. This is remarked in \cite[Section 4.10]{lan}.\\
\\
The semistability of the bundle $P\times_K\mathfrak{g}^{0,0}$ is necessary to show that the system of Hodge bundles $P\times_K\mathfrak{g}$ is polystable as a Higgs bundle in the sense of Definition \ref{streg}. This will play a central role in the proofs of statements that appear later. We observe the following.

\begin{lemma}\label{lem00}
Let $X$ be a projective, klt variety of general type, and let $(P,\theta)$ define a uniformizing system of Hodge bundles for a Hodge group $G_0$ on $X_{reg}$. Then the vector bundle $P\times_K\mathfrak{g}^{0,0}$ is semistable with respect to $K_X$. 
\end{lemma}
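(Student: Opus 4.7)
The plan is to realize $E^{0,0}:=P\times_K\mathfrak{g}^{0,0}$ as the image of a $K_X$-semistable bundle of the same slope under the Lie bracket, and then conclude by a slope manipulation. Throughout I exploit that a Hodge group of Hermitian type gives $\mathfrak{g}=\mathfrak{g}^{-1,1}\oplus\mathfrak{g}^{0,0}\oplus\mathfrak{g}^{1,-1}$ with no compact factors.

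First, I identify the pieces. The uniformizing isomorphism $\theta$ gives $E^{-1,1}\cong\mathcal{T}_{X_{reg}}$. The Killing form on $\mathfrak{g}$ is $K$-invariant, pairs $\mathfrak{g}^{-1,1}$ perfectly with $\mathfrak{g}^{1,-1}$, and restricts to a non-degenerate form on $\mathfrak{g}^{0,0}$; passing to associated bundles this yields $E^{1,-1}\cong(E^{-1,1})^{*}\cong\Omega^{1}_{X_{reg}}$ and $E^{0,0}\cong(E^{0,0})^{*}$. The self-duality of $E^{0,0}$ forces $2c_{1}(E^{0,0})=0$ in $N^{1}(X)_{\mathbb{Q}}$, so $\mu_{K_{X}}(E^{0,0})=0$.

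Next, by Proposition~\ref{tanprop} together with \cite[Lemma~2.26]{naht}, the bundles $\mathcal{T}_{X_{reg}}$ and $\Omega^{1}_{X_{reg}}$ are $K_{X}$-semistable in the sense of Definition~\ref{slreg}. Invoking preservation of semistability under tensor products in the klt setting (reducing to the smooth projective case via a maximally quasi-\'etale Galois cover and the classical tensor-stability theorem, or using tensor-stability results from \cite{gkpt}), the bundle $E^{-1,1}\otimes E^{1,-1}$ is $K_{X}$-semistable of slope $0$. On the Lie-algebra side, the $K$-equivariant bracket $\mathfrak{g}^{-1,1}\otimes\mathfrak{g}^{1,-1}\to\mathfrak{g}^{0,0}$ is surjective: the subspace $\mathfrak{g}^{-1,1}+[\mathfrak{g}^{-1,1},\mathfrak{g}^{1,-1}]+\mathfrak{g}^{1,-1}$ is readily checked via Jacobi to be an ideal of the semisimple $\mathfrak{g}$, and on each simple factor --- necessarily Hermitian since no compact factors are present --- it must coincide with the whole factor, forcing $[\mathfrak{g}^{-1,1},\mathfrak{g}^{1,-1}]=\mathfrak{g}^{0,0}$. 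Hence we obtain a surjection $E^{-1,1}\otimes E^{1,-1}\twoheadrightarrow E^{0,0}$ on $X_{reg}$.

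I then finish with an elementary slope principle: if $\varphi\colon V\twoheadrightarrow W$ is surjective with $V$ semistable and $\mu(V)=\mu(W)$, then $W$ is semistable. Indeed, the kernel $\ker\varphi$ has slope $\mu(V)$ by additivity in the short exact sequence, and any hypothetical destabilizing subsheaf $G\subsetneq W$ would lift to a subsheaf of $V$ whose slope is a weighted average of $\mu(\ker\varphi)=\mu(V)$ and $\mu(G)>\mu(V)$, exceeding $\mu(V)$ and contradicting the semistability of $V$. Applied to our surjection, this yields $K_{X}$-semistability of $E^{0,0}$. The main obstacle in this plan is justifying the tensor-product semistability on the quasi-projective smooth locus $X_{reg}$ in the slope formalism of Definition~\ref{slreg}; once that (and the Lie-algebraic surjectivity of the bracket) is in hand, the remaining steps are formal.
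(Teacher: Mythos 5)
Your argument is correct, and it reaches the conclusion by the same underlying mechanism as the paper --- presenting $P\times_K\mathfrak{g}^{0,0}$ as a quotient of the slope-zero semistable bundle $\mathcal{T}_{X_{reg}}\otimes\Omega^1_{X_{reg}}$ via the surjective bracket $\mathfrak{g}^{-1,1}\otimes\mathfrak{g}^{1,-1}\twoheadrightarrow\mathfrak{g}^{0,0}$ --- but the route differs in ways worth noting. The paper first decomposes $\mathfrak{g}=\bigoplus_i\mathfrak{g}_i$ into simple ideals, matches this with a splitting $\mathcal{T}_{X_{reg}}=\bigoplus V_i$, argues factor by factor, and must then treat disconnected $G$ separately by passing to a finite \'etale cover on which the structure group reduces to $K'=K\cap G^{0}$, pulling back and descending semistability; its final step is a slightly awkward duality/destabilizing-subsheaf argument. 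You avoid the bundle-level decomposition and the connected/disconnected dichotomy entirely, because the bracket map and its $K$-equivariance make sense for the full $\mathfrak{g}$ and for any (possibly disconnected) $K$; you also supply a self-contained proof of the surjectivity of the bracket (the ideal generated by $\mathfrak{g}^{\pm1,\mp1}$ meets every simple factor nontrivially since $G_0$, being of Hermitian type, has no compact factors), where the paper cites Simpson, and you close with the cleaner standard principle that a torsion-free quotient of equal slope of a semistable sheaf is semistable. What your version buys is uniformity and brevity; what it costs is that the whole weight rests on semistability of the tensor product $\mathcal{T}_{X_{reg}}\otimes\Omega^1_{X_{reg}}$ with respect to the nef divisor $K_X$ in the slope formalism of Definition \ref{slreg} --- but the paper's own proof uses exactly the same fact (semistability of $(P\times_K\mathfrak{g}_i^{-1,1})\otimes(P\times_K\mathfrak{g}_i^{1,-1})$), so this is not an additional gap; just be aware that your parenthetical reduction ``via a maximally quasi-\'etale Galois cover'' does not literally produce a smooth variety, so the tensor-semistability step should instead be referenced from \cite{gkpt}/\cite{naht} or obtained by restriction to general complete-intersection curves in the smooth locus.
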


\begin{proof}
Let $\mathcal{T}_{X_{reg}}=\bigoplus V_i$ be a decomposition corresponding to irreducible representations of $K\to\textrm{Aut}(\mathfrak{g}^{-1,1})$. First suppose that the group $G$ is connected. Let $\mathfrak{g}=\bigoplus\mathfrak{g}_i$ be the decomposition into simple ideals. Then $\mathfrak{g}^{-1,1}=\bigoplus\mathfrak{g}_i^{-1,1}$ is the decomposition into irreducible representations of $K$. Since $\mathcal{T}_{X_{reg}}$ is semistable, it follows that the $V_i\cong P\times_K\mathfrak{g}_i^{-1,1}$ and their duals $P\times_K\mathfrak{g}_i^{1,-1}$ are also semistable. Since the Lie bracket $\mathfrak{g}_i^{-1,1}\otimes\mathfrak{g}_i^{1,-1}\to\mathfrak{g}_i^{0,0}$ is surjective (see \cite[proof of Proposition 9.6]{simp}), we get a surjective map of vector bundles 
\begin{align*}
(P\times_K\mathfrak{g}_i^{-1,1})\otimes(P\times_K\mathfrak{g}_i^{1,-1})\to P\times_K\mathfrak{g}_i^{0,0},
\end{align*}
where the left hand side is a semistable vector bundle of degree zero and the right hand side is a vector bundle of degree zero. It follows that $P\times_K\mathfrak{g}_i^{0,0}$ is also semistable. Indeed, if it is not, then neither is the dual bundle $(P\times_K\mathfrak{g}_i^{0,0})^\vee$. Note that $(P\times_K\mathfrak{g}_i)^\vee$ is also of degree zero, and is a subbundle of $(P\times_K\mathfrak{g}_i^{-1,1})\otimes(P\times_K\mathfrak{g}_i^{1,-1})$. Thus, if some subsheaf of $(P\times_K\mathfrak{g}_i)^\vee$ destabilizes it, then it also destabilizes $(P\times_K\mathfrak{g}_i^{-1,1})\otimes(P\times_K\mathfrak{g}_i^{1,-1})$, which is a contradiction. Since $P\times_K\mathfrak{g}^{0,0}=\bigoplus_iP\times_K\mathfrak{g}_i^{0,0}$, it follows that $P\times_K\mathfrak{g}^{0,0}$ is also a semistable vector bundle of degree zero.\\
\\
Now suppose that $G$ is not connected. Let $G'$ be the connected component of $G$, let $K'=K\cap G'$, and let $f':Y'\to X_{reg}$ be a finite \'{e}tale cover which the structure group of $P$ can be reduced to $K'$. Note that $Y'$ can be completed to a projective, klt variety $Y$ such that $K_Y$ is ample, and there is a finite quasi-\'{e}tale map $f:Y\to X$, which restricts to $f'$ on $Y'$. Since the $V_i$'s are semistable with respect to $K_X$, the pullbacks $f'^*V_i$ are semistable with respect to $f^*K_X=K_Y$. If we decompose $\mathcal{T}_Y=\bigoplus_kV'_k$ corresponding to irreducible components of the representation $K'\to \textrm{Aut}(\mathfrak{g}^{-1,1})$, the $V'_k$'s are direct summands of the $f'^*V_i$, so they are also semistable with respect to $K_Y$. Since $V'_k\cong f'^*(P\times_K\mathfrak{g}_k^{-1,1})=f'^*P\times_{K'}\mathfrak{g}_k^{-1,1}$, it follows from the argument in the case that $G$ is connected, that $f'^*(P\times_K\mathfrak{g}^{0,0})$ is semistable with respect to $K_Y$ of degree zero. Since $f'$ is finite \'{e}tale, it follows that $P\times_K\mathfrak{g}^{0,0}$ is also semistable with respect to $K_X$ of degree zero. 
\end{proof}

The following observation is at the heart of the proof of Theorem \ref{genthm}, and we again split the proof into two cases, namely $G$ connected and disconnected.

\begin{proposition}\label{propstab}
    Let $X$ be a projective, klt variety with ample canonical divisor $K_X$ and let $(P,\theta)$ define a uniformizing system of Hodge bundles for a Hodge group $G_0$ on $X_{reg}$. Then the system of Hodge bundles $P\times_K\mathfrak{g}$ is $K_X$-polystable as a Higgs bundle on $X_{reg}$. If $\mathfrak{g}$ is a simple Lie agebra, then it is $K_X$-stable. 
\end{proposition}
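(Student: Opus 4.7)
The plan is to exploit Lemma \ref{lemsta} to reduce Higgs-sheaf stability of $E = P\times_K\mathfrak{g}$ to stability as a system of Hodge sheaves, and then to reduce further to the case that $\mathfrak{g}$ is a simple Lie algebra, where the result should follow from a direct slope estimate using Proposition \ref{tanprop} and Lemma \ref{lem00}.

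First I would carry out two standard structural reductions. If $G$ is disconnected, pass to a finite \'{e}tale cover $f':Y'\to X_{reg}$ over which the structure group of $P$ reduces to $K'=K\cap G^0$ (where $G^0$ is the identity component of $G$), and extend this, as in the proof of Lemma \ref{lem00}, to a finite quasi-\'{e}tale Galois cover $f:Y\to X$ with $Y$ projective, klt, and $K_Y=f^*K_X$ ample; since polystability with respect to $K_Y$ on $Y_{reg}$ descends to polystability with respect to $K_X$ on $X_{reg}$ for such covers, we may assume $G$ is connected and $\mathfrak{g}$ semisimple. Next, decompose $\mathfrak{g}=\bigoplus_i\mathfrak{g}_i$ into simple ideals. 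Since $K$ (with Lie algebra $\mathfrak{g}^{0,0}$) preserves each $\mathfrak{g}_i$ and $[\mathfrak{g}_i,\mathfrak{g}_j]=0$ for $i\neq j$, we obtain $E=\bigoplus_iE_i$ with $E_i=P\times_K\mathfrak{g}_i$ as Higgs bundles. Polystability of $E$ therefore reduces to stability of each $E_i$, so we may assume $\mathfrak{g}$ simple and aim at stability.

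In the simple case, by Lemma \ref{lemsta} it suffices to show that $E$ is stable as a system of Hodge bundle. Under the uniformizing hypothesis $\theta:\mathcal{T}_{X_{reg}}\cong E^{-1,1}$, the $K$-invariant Killing form on $\mathfrak{g}$ identifies $E^{1,-1}\cong\Omega^1_{X_{reg}}$ and $E^{0,0}\cong(E^{0,0})^\vee$, so $\deg_{K_X}(E)=0$. Applying semistability of $\mathcal{T}_{X_{reg}}$ (Proposition \ref{tanprop}), of $E^{0,0}$ (Lemma \ref{lem00}), and, by duality, of $E^{1,-1}$, bounds the slopes of the graded pieces of any system of Hodge subsheaf $F=F^{1,-1}\oplus F^{0,0}\oplus F^{-1,1}$, and combining the bounds yields
\begin{align*}
    \deg_{K_X}(F)\leq\frac{(\textrm{rank}(F^{1,-1})-\textrm{rank}(F^{-1,1}))\cdot K_X^n}{n}.
\end{align*}

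The hard part will be converting the Higgs-field compatibilities $\textrm{ad}(\theta)(F^{1,-1})\subset F^{0,0}\otimes\Omega^1_{X_{reg}}$ and $\textrm{ad}(\theta)(F^{0,0})\subset F^{-1,1}\otimes\Omega^1_{X_{reg}}$, together with the simplicity of $\mathfrak{g}$, into the rank inequality $\textrm{rank}(F^{1,-1})\leq\textrm{rank}(F^{-1,1})$, with equality only when $F=E$. The approach I would take is to observe that $\textrm{Ad}$-invariance of the Killing form implies that the orthogonal complement $F^\perp\subset E$ is itself a system of Hodge subsheaf with $\deg_{K_X}(F^\perp)=\deg_{K_X}(F)$ and $\textrm{rank}(F)+\textrm{rank}(F^\perp)=\textrm{rank}(E)$, and then to combine this symmetry with the surjectivity of the bracket $\mathfrak{g}^{-1,1}\otimes\mathfrak{g}^{1,-1}\to\mathfrak{g}^{0,0}$ and the faithful action of $\mathfrak{g}^{0,0}$ on $\mathfrak{g}^{-1,1}$ in a simple Hermitian Lie algebra to force the desired rank bound. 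This will give $\mu_{K_X}(F)<0=\mu_{K_X}(E)$ for every proper non-zero system of Hodge subsheaf $F$, completing the proof of stability in the simple case, and hence polystability in general.
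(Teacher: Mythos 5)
Your skeleton matches the paper's: reduce Higgs-stability to stability as a system of Hodge sheaves via Lemma \ref{lemsta}, split $E=P\times_K\mathfrak{g}$ along the simple ideals once the structure group sits in the identity component, bound $\deg_{K_X}(F)$ of a Hodge subsheaf using semistability of $\mathcal{T}_{X_{reg}}$ (Proposition \ref{tanprop}), of $E^{0,0}$ (Lemma \ref{lem00}) and, by duality, of $E^{1,-1}\cong\Omega^1_{X_{reg}}$, and conclude from a rank inequality between $F^{1,-1}$ and $F^{-1,1}$. The genuine gap is that this rank inequality, which you yourself flag as ``the hard part'', is never proved: you only describe a strategy, and as described it does not close. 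The Killing form pairs $\mathfrak{g}^{-1,1}$ perfectly with $\mathfrak{g}^{1,-1}$, so for the orthogonal complement one gets, at a general point, $\mathrm{rank}\bigl((F^\perp)^{1,-1}\bigr)=\dim\mathfrak{g}^{1,-1}-\mathrm{rank}(F^{-1,1})$ and $\mathrm{rank}\bigl((F^\perp)^{-1,1}\bigr)=\dim\mathfrak{g}^{-1,1}-\mathrm{rank}(F^{1,-1})$; hence the desired inequality for $F$ is \emph{equivalent} to the desired inequality for $F^\perp$, so the $F\leftrightarrow F^\perp$ symmetry is self-dual and by itself yields nothing, and you do not explain how surjectivity of the bracket $\mathfrak{g}^{-1,1}\otimes\mathfrak{g}^{1,-1}\to\mathfrak{g}^{0,0}$ forces the bound, nor how the equality case gives $F=E$. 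This inequality, together with its equality case ($W$ is then locally a direct sum of simple ideals, hence equals $E$ when $\mathfrak{g}$ is simple), is exactly \cite[Corollary 9.3]{simp}; since it is a fibrewise Lie-theoretic statement about $\mathrm{ad}(\mathfrak{g}^{-1,1})$-invariant graded subspaces, the paper simply cites it, and you should too rather than attempt a fresh proof.

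A secondary divergence is the disconnected case. You invoke descent of polystability along the finite cover that reduces the structure group to $K'=K\cap G^0$. The paper avoids any descent statement: it pulls back, notes that a saturated subsystem of non-negative degree becomes a degree-zero direct summand upstairs, hence (again by \cite[Corollary 9.3]{simp}) is locally a direct sum of simple ideals, and then uses the unique finest global decomposition $P\times_K\mathfrak{g}=\bigoplus_j\mathcal{E}_j$ on $X_{reg}$ to see each $\mathcal{E}_j$ is stable. Your descent claim is plausible for Galois covers (uniqueness of the socle plus averaging), but you assert it without proof; the cover trivializing the component group need not be Galois, so you would have to pass to a Galois closure, and the argument must be carried out for Higgs sheaves with the Definition \ref{streg} notion of slope on the smooth locus of a klt variety. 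As written, this step is unjustified, whereas the paper's route requires no such lemma. Your remaining reductions (the global splitting $E=\bigoplus_iE_i$ only after the structure group lies in $G^0$, and the slope bound $\deg_{K_X}(F)\le(\mathrm{rank}(F^{1,-1})-\mathrm{rank}(F^{-1,1}))\cdot K_X^n/n$) are correct and agree with the paper.
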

\begin{proof}
First, suppose that the chosen complexification $G$ of $G_0$ is connected. By assumption, there is an isomorphism of vector bundles $\theta:\mathcal{T}_{X_{reg}}\cong P\times_K\mathfrak{g}^{-1,1}$ on $X_{reg}$. Write $\mathcal{T}_{X_{reg}}=\bigoplus_iV_i$, where each $V_i$ corresponds to an irreducible component of the representation $K\hookrightarrow \textrm{Aut}(\mathfrak{g^{-1,1}})$. Recall from the comment following Proposition \ref{tanprop} that $\mathcal{T}_{X_{reg}}$ is semistable with respect to $K_X$, and since $K_X$ is ample by assumption, $\mathcal{T}_{X_{reg}}$ has negative slope. It follows that all direct summands $V_i$ are semistable with respect to $K_X$ and have the same negative slope as $\mathcal{T}_{X_{reg}}$.\\
Let $\mathfrak{g}=\bigoplus_i\mathfrak{g}_i$ be a decomposition of $\mathfrak{g}$ as a direct sum of simple ideals. Then each $P\times_K\mathfrak{g}_i$ is a subsystem of Hodge bundles of $P\times_K\mathfrak{g}$ (see \cite[p.903]{simp}), and we have a decomposition $P\times_K\mathfrak{g}=\bigoplus_iP\times_K\mathfrak{g}_i$. Let $\mathfrak{g}^{-1,1}=\bigoplus_i\mathfrak{g}_i^{-1,1}$ be the decomposition into irreducible representations of $K\subset \textrm{Aut}(\mathfrak{g^{-1,1}})$. Since $V_i\cong P\times_K\mathfrak{g}_i^{-1,1}$ for all $i$, the bundles $P\times_K\mathfrak{g}_i^{-1,1}$ and their duals $P\times_K\mathfrak{g}_i^{1,-1}$ are semistable with respect to $K_X$. From the proof of Lemma \ref{lem00}, we know that the bundle  $P\times_K\mathfrak{g}_i^{0,0}$ is semistable with respect to $K_X$ of degree zero. Thus each $P\times_K\mathfrak{g}_i$ is a system of Hodge bundles on $X_{reg}$ of degree zero.\\
By Lemma \ref{lemsta}, to show that $P\times_K\mathfrak{g}$ is polystable as a Higgs bundle, it is sufficient to show that it is polystable as a system of Hodge bundles on $X_{reg}$. Let $W\subset P\times_K\mathfrak{g}_i$ be a subsystem of Hodge sheaves. Then we know from \cite[Corollary 9.3]{simp}, that $\textrm{rank}(W^{-1,1})\ge\textrm{rank}(W^{1,-1})$, and if equality holds then $W=P\times_K\mathfrak{g}_i$. This implies that $\mu_{K_X}(W)\le0$, and if equality holds then $W=P\times_K\mathfrak{g}^{-1,1}$. Thus each $P\times_K\mathfrak{g}_i$ is $K_X$-stable as a Higgs bundle, and since $P\times_K\mathfrak{g}=\bigoplus_iP\times_K\mathfrak{g}_i$, it follows that $P\times_K\mathfrak{g}$ is $K_X$-polystable as a Higgs bundle on $X_{reg}$, of degree zero. Note that if $\mathfrak{g}$ is a simple Lie algebra, then the only non trivial ideal of $\mathfrak{g}$ is itself. So in this case, $P\times_K\mathfrak{g}$ is even $K_X$-stable as a Higgs bundle on $X_{reg}$.\\
\\
Now suppose $G$ is not connected.  Let $G'$ denote the connected component of $G$ and let $K'=K\cap G'$. Let $f:Y'\to X_{reg}$ be a finite \'{e}tale cover over which the structure group of $P$ can be reduced from $K$ to $K'$, and let $Y$ be a normal projective completion of $Y'$ such that $f$ can be extended to a quasi-\'{e}tale map $\gamma:Y\to X$. Note that $Y$ is again klt, $Y'\subset Y_{reg}$ is a big open subset, and $K_Y=\gamma^*K_X$ is still ample. Thus $\mathcal{T}_{Y'}$ is semistable with respect to $K_Y$ of negative degree, and if we write $\mathcal{T}_{Y'}=\bigoplus_kV'_k$ corresponding to irreducible components of the representation $K'\subset\textrm{Aut}(\mathfrak{g}^{-1,1})$, then each $V'_k$ is semistable with respect to $K_Y$, of negative degree.\\
Let $P'$ be the unique extension of $f^*P$ to $Y_{reg}$ as a holomorphic principal $K'$-bundle, which we know exists by \cite[Theorem 1.1]{biswas}. Then by the $G$ connected case, the system of Hodge bundles $P'\times_{K'}\mathfrak{g}$ is $K_Y$-polystable as a Higgs bundle on $Y_{reg}$. Therefore, $f^*(P\times_K\mathfrak{g})$ is $K_Y$-polystable as a Higgs bundle on $Y'$.\\
If $W\subset P\times_K\mathfrak{g}$ is any saturated subsystem of Hodge sheaves with $\textrm{deg}(W)\ge0$, then $f^*W$ is a subsystem of Hodge sheaves of $f^*(P\times_K\mathfrak{g})$ of degree zero, and is therefore a direct summand of $f^*(P\times_K\mathfrak{g})$. By \cite[Corollary 9.3]{simp}, $f^*W$ is locally a direct sum of simple ideals of $\mathfrak{g}$, thus the same holds for $W$ on $X_{reg}$. From the discussion in \cite[p.903]{simp}, there is a unique finest global decomposition 
\begin{align}\label{decomp}
P\times_K\mathfrak{g}=\bigoplus_j\mathcal{E}_j,
\end{align}
where each $\mathcal{E}_j$ is a subsystem of Hodge bundles of $P\times_K\mathfrak{g}$ of degree 0. Moreover, each $\mathcal{E}_j$ is locally a direct sum of simple ideals of $\mathfrak{g}$. Now if $W\subset\mathcal{E}_j$ is any saturated subsystem of Hodge sheaves of degree $\ge0$, then we know that $W$ is a locally a direct sum of simple ideals of $\mathfrak{g}$, so we have $W=\mathcal{E}_j$ by minimality of the decomposition (\ref{decomp}). Thus $\mathcal{E}_j$ is stable with respect to $K_X$ as a system of Hodge bundles. It follows that $P\times_K\mathfrak{g}$ is $K_X$-polystable as a system of Hodge bundles, which is equivalent to $P\times_K\mathfrak{g}$ being $K_X$-polystable as a Higgs bundle on $X_{reg}$, by Lemma \ref{lemsta}.
\end{proof}

We are now ready to prove Theorem \ref{genthm}.

\subsection{Proof of Theorem \ref{genthm}}

We split the proof into two steps, in order to make it more readable. The first step is to show that if $X\cong\mathcal{D}/\Gamma$, then $X$ satisfies the two conditions of Theorem \ref{genthm}. The second step is to prove the converse. 

\begin{proof}
\textbf{Step I}.
Suppose we can write $X\cong\mathcal{D}/\Gamma$, for a Hermitian symmetric space $\mathcal{D}$ of noncompact type. Let $G_0=\textrm{Aut}(\mathcal{D})$ be the full automorphism group of $\mathcal{D}$, then $\Gamma\subset G_0$. In this case $G_0$ is a Hodge group of Hermitian type, and $K_0$ is a maximal compact subgroup of $G_0$. Let $G$ and $K$ denote complexifications of $G_0$ and $K_0$ respectively. Since the smooth locus $X_{reg}$ is (the analytic space associated to) a quasi-projective variety, its fundamental group $\pi_1(X_{reg})$ is finitely generated, and isomorphic to $\Gamma$. Then from Selberg's lemma (see \cite{alperin}), it follows that $\Gamma$ admits a normal torsion free subgroup $\widehat{\Gamma}$ of finite index. Thus the quotient map $\mathcal{D}\to\mathcal{D}/\Gamma=X$ factors as 
\begin{align}\label{mapfac}
    \mathcal{D}\to\mathcal{D}/\widehat{\Gamma}\to\mathcal{D}/\Gamma=X,
\end{align}
where $\mathcal{D}/\widehat{\Gamma}=Y$ is smooth and projective because $\widehat{\Gamma}$ acts freely and cocompactly on $\mathcal{D}$. The map $\gamma:Y\to X$ is quasi-\'{e}tale and Galois with group $H=\Gamma/\widehat{\Gamma}$. Since $Y$ is uniformized by $\mathcal{D}$, it follows from the classical Theorem \ref{simpthm} that $Y$ admits a uniformizing system of Hodge bundles $(P',\theta)$ for the Hodge group $G_0$, such that $c_1(P'\times_K\mathfrak{g})\cdot[K_Y]^{n-1}=c_2(P'\times_K\mathfrak{g})\cdot[K_Y]^{n-2}=0$. There is hence an isomorphism of vector bundles $\mathcal{T}_Y\cong P'\times_K\mathfrak{g}^{-1,1}$ on $Y$.\\
By the purity of branch locus, $\gamma:Y\to X$ branches only over the singular locus of $X$. Let $Y^o=\gamma^{-1}(X_{reg})$. Then $\gamma|_{Y^o}:Y^o\to X_{reg}$ is \'{e}tale, and $Y^o\subset Y$ is a big open subset. The action of $H$ on $Y$ restricts to a free action on $Y^o$ and lifts to a left action on $\mathcal{T}_{Y^o}\cong P''\times_K\mathfrak{g}^{-1,1}$, where $P''=P'|_{Y^o}$. On $X_{reg}$, we have $\mathcal{T}_{X_{reg}}\cong\mathcal{T}_{Y^o}/H$, i.e., there is an isomorphism $\theta:\mathcal{T}_{X_{reg}}\cong P\times_K\mathfrak{g}^{-1,1}$, where $P\cong P''/H$ is a principal $K$-bundle on $X_{reg}$. Thus $(P,\theta)$ is a uniformizing system of Hodge bundles on $X_{reg}$.\\
Let $\mathcal{E}$ be the system of Hodge bundles $P\times_K\mathfrak{g}$ on $X_{reg}$, and let $\mathcal{E}'$ denote the unique extension of $\mathcal{E}$ to $X$ as a reflexive sheaf. Indeed, such an extension exists because, in view of the factorization \ref{mapfac}, $(\gamma|_{Y^o})^*\mathcal{E}$ extends to $Y=\mathcal{D}/\widehat{\Gamma}$ and is hence algebraic. Then, $\gamma^{[*]}\mathcal{E}'\cong P'\times_K\mathfrak{g}$ on $Y$, because $\gamma^{[*]}\mathcal{E}$ and $P'\times_K\mathfrak{g}$ are both reflexive and agree on the big open subset $Y^o$ of $Y$. Moreover, we have $K_Y=\gamma^*K_X$. From the behaviour of $\mathbb{Q}$-Chern classes under quasi-\'{e}tale covers (\cite[Proposition 3.17]{gkpt}), it follows that
\begin{align*}
&(a)\;c_1(\mathcal{E}')\cdot[K_X]^{n-1}=(\textrm{deg}(\gamma))^{-1}c_1(\gamma^{[*]}\mathcal{E}')\cdot[\gamma^*K_X]^{n-1}=(\textrm{deg}(\gamma))^{-1}c_1(P'\times_K\mathfrak{g})\cdot[K_Y]^{n-1} \\
&(b)\;\widehat{c}_2(\mathcal{E}')\cdot[K_X]^{n-2}=(\textrm{deg}(\gamma))^{-1}\widehat{c}_2(\gamma^{[*]}\mathcal{E}')\cdot[\gamma^*K_X]^{n-2}=(\textrm{deg}(\gamma))^{-1}c_2(P'\times_K\mathfrak{g})\cdot[K_Y]^{n-2}.
\end{align*}
The right hand sides of equalities $(a)$ and $(b)$ are zero, again by Theorem \ref{simpthm}. Thus it follows that $\widehat{ch}_2(\mathcal{E}')\cdot[K_X]^{n-2}=0$. This proves one implication of Theorem \ref{genthm}.\\ 
\\
\textbf{Step II}.
Conversely, suppose that $X$ is a projective klt variety which satisfies the two conditions of Theorem \ref{genthm}. Let $G_0$, $K_0$, $G$, and $K$ be as earlier. By assumption, we have an isomorphism of vector bundles $\theta:\mathcal{T}_{X_{reg}}\cong P\times_K\mathfrak{g}^{-1,1}$ on $X_{reg}$. \\
Let $\gamma:Y\to X$ be a Galois, maximally quasi-\'{e}tale cover, the existence of which is known (see \cite[Theorem 1.14]{gkp1}). Note that $Y$ is again klt (see \cite[Proposition 5.20]{kolmori}), and $K_Y=\gamma^*K_X$ is ample. Since $\gamma$ branches only over the singular locus of $X$, the restricted map $\gamma|_{Y^o}:Y^o\to X_{reg}$ is \'{e}tale. Here $Y^o=\gamma^{-1}(X_{reg})\subset Y_{reg}$, and note that $Y^o$ is a big open subset of $Y$. Then $\mathcal{T}_{Y^o}=(\gamma|_{Y^o})^*\mathcal{T}_{X_{reg}}\cong(\gamma|_{Y^o})^*(P\times_K\mathfrak{g}^{-1,1})=P'\times_K\mathfrak{g}^{-1,1}$, where $P'=(\gamma|_{Y^o})^*P$ is a principal $K$-bundle on $Y^o$. We can uniquely extend $P'$ to $Y_{reg}$ as a holomorphic principal $K$-bundle (see \cite[Theorem 1.1)]{biswas}),  which by abuse of notation, we denote again by $P'$. There is an isomorphism of vector bundles $\theta':\mathcal{T}_{Y_{reg}}\cong P'\times_K\mathfrak{g}^{-1,1}$ on $Y_{reg}$. Note that $\mathcal{T}_{Y_{reg}}$ has negative degree, and is semistable with respect to $K_Y$ again by the comment following Remark 4.1.\\
Let $\mathcal{F}=P'\times_K\mathfrak{g}$, then $\mathcal{F}$ is a system of Hodge bundles on $Y_{reg}$ with Higgs field coming from $\theta'$. By Proposition \ref{propstab}, we know that $\mathcal{F}$ is $K_Y$-polystable as a Higgs bundle on $Y_{reg}$, and of degree zero. Let $\mathcal{F}'$ and $\mathcal{E}'$ be the unique extensions of $\mathcal{F}$ to $Y$ and $P\times_K\mathfrak{g}$ to $X$ respectively, as reflexive sheaves. Since $\gamma^{[*]}\mathcal{E}'$ and $\mathcal{F}'$ agree on the big open subset $Y^o$, we have $\mathcal{F}'\cong\gamma^{[*]}\mathcal{E}'$. By assumption, $\widehat{ch}_2(\mathcal{E}')\cdot[K_X]^{n-2}=0$ holds, and by the behaviour of $\mathbb{Q}$-Chern classes under quasi-\'{e}tale covers, we have $\widehat{ch}_2(\mathcal{F}')\cdot[K_Y]^{n-2}=0$. From \cite[Theorem 5.1]{gkpt2}, it follows that $(\mathcal{F},\theta')\in\textrm{TPI-Higgs}_{Y_{reg}}$. Then \cite[Proposition 3.17]{gkpt2}, together with $Y$ being maximally quasi-\'{e}tale implies that $\mathcal{F}'$ is locally free. Since $\mathcal{T}_Y$ is a direct summand of $\mathcal{F}'$, it follows that $\mathcal{T}_Y$ is locally free. The solution to the Lipman-Zariski conjecture for klt spaces (see \cite[Theorem 16.1]{gkkp}) asserts that $Y$ is smooth. We may thus apply Simpson's classical Theorem \ref{simpthm} to conclude that $Y$ is uniformized by $G_0/K_0=\mathcal{D}$.\\
This means that $Y\cong\mathcal{D}/\Gamma'$, where $\Gamma'$ is a discrete, cocompact, torsion free group of automorphisms of $\textrm{Aut}(\mathcal{D})$, and $X\cong Y/H$, where $H$ is a finite group of automorphisms of $Y$ acting fixed point freely in codimension one. By arguments analogous to those in the proof of \cite[Theorem 1.3]{gkpt}, (specifically, step (1.3.3)$\implies$(1.3.1)), there is an exact sequence of groups $1\to\Gamma'\to\Gamma\to H\to1$, where $\Gamma$ is a discrete, cocompact subgroup of $\textrm{Aut}(\mathcal{D})$ acting properly discontinuously, and fixed point freely in codimension one, such that $X\cong\mathcal{D}/\Gamma$. 
\end{proof}




\section{Uniformization by a polydisk}

We are now ready to state the first uniformisation result, which is an application of Theorem \ref{genthm}. This is an extension of \cite[Corollary 9.7]{simp}, to the klt setting. We make the steps of the proof of Theorem \ref{genthm} explicit in this section. 

\subsection{Sufficient conditions}

\begin{theorem}\label{thm1}
Let $X$ be an $n$-dimensional projective klt variety of general type whose canonical divisor $K_X$ is big and nef. Suppose that the tangent sheaf $\mathcal{T}_{X_{reg}}$ of the smooth locus $X_{reg}$ splits as a direct sum
\begin{align}
    \mathcal{T}_{X_{reg}}=\mathcal{L}_1\oplus\dots\oplus\mathcal{L}_n \label{split2}
\end{align}
where each direct summand $\mathcal{L}_i$ is a line bundle. Then the canonical model $X_{can}$ of $X$ admits a Galois, quasi-\'{e}tale cover $\gamma:Y\to X_{can}$, where $Y$ is a smooth projective variety whose universal cover is the polydisk $\mathbb{H}^n$. 
\end{theorem}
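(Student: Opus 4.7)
The plan is to apply Theorem \ref{genthm} with the Hermitian symmetric space $\mathcal{D} = \mathbb{H}^n$ and the Hodge group $G_0 = SL(2, \mathbb{R})^n$, and then to extract the smooth Galois cover $Y$ via Selberg's lemma. First I would pass from $X$ to its canonical model $X_{\mathrm{can}}$: since $X$ is klt of general type with $K_X$ big and nef, the base-point-free theorem together with the minimal model program produces a projective klt variety $X_{\mathrm{can}}$ with ample canonical divisor, together with a birational contraction $X \to X_{\mathrm{can}}$ that is an isomorphism in codimension one. Consequently the assumed splitting $\mathcal{T}_{X_{reg}} = \mathcal{L}_1 \oplus \cdots \oplus \mathcal{L}_n$ transports to a splitting $\mathcal{T}_{(X_{\mathrm{can}})_{reg}} = \mathcal{M}_1 \oplus \cdots \oplus \mathcal{M}_n$ into line bundles on the smooth locus of $X_{\mathrm{can}}$, and I work with $X_{\mathrm{can}}$ from this point on.

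To verify Condition~(1) of Theorem \ref{genthm} on $X_{\mathrm{can}}$, I build a uniformizing system of Hodge bundles directly from the splitting. For $G_0 = SL(2,\mathbb{R})^n$ the complexified maximal compact subgroup is $K = (\mathbb{C}^*)^n$, and $\mathfrak{g}^{-1,1} \cong \mathbb{C}^n$ decomposes under the adjoint action of $K$ as a direct sum of one-dimensional characters. The splitting of $\mathcal{T}_{(X_{\mathrm{can}})_{reg}}$ thus provides a reduction of structure group of its frame bundle from $\mathrm{GL}(n, \mathbb{C})$ to $K$; the resulting principal $K$-bundle $P$ together with the tautological isomorphism $\theta \colon \mathcal{T}_{(X_{\mathrm{can}})_{reg}} \xrightarrow{\,\cong\,} P \times_K \mathfrak{g}^{-1,1}$ is the required $(P, \theta)$.

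For Condition~(2) I must establish $\widehat{ch}_2(\mathcal{E}') \cdot [K_{X_{\mathrm{can}}}]^{n-2} = 0$, where $\mathcal{E}'$ is the reflexive extension of $\mathcal{E} = P \times_K \mathfrak{g}$. Using the decomposition $\mathfrak{g} = \bigoplus_i \mathfrak{sl}(2,\mathbb{C})_i$ into simple ideals one obtains $\mathcal{E}|_{(X_{\mathrm{can}})_{reg}} = \bigoplus_i \mathcal{E}_i$ with $\mathcal{E}_i \cong \mathcal{M}_i \oplus \mathcal{O} \oplus \mathcal{M}_i^{-1}$, and a direct Chern character computation reduces the required vanishing to the assertion $c_1(\mathcal{M}_i)^2 \cdot [K_{X_{\mathrm{can}}}]^{n-2} = 0$ for every $i$. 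The upper bound $\leq 0$ is the $\mathbb{Q}$-Bogomolov--Gieseker inequality applied to each $\mathcal{E}_i$, which is $K_{X_{\mathrm{can}}}$-stable as a Higgs bundle by Proposition \ref{propstab} since each $\mathfrak{sl}(2,\mathbb{C})_i$ is a simple ideal. The matching lower bound is the principal new difficulty: it would need to come from combining the equal-slopes constraint $\mu_{K_{X_{\mathrm{can}}}}(\mathcal{M}_i) = -K_{X_{\mathrm{can}}}^n / n$ forced by the semistability of $\mathcal{T}_{X_{\mathrm{can}}}$ (Proposition \ref{tanprop}) with a Hodge-index analysis and the relation $\sum_i c_1(\mathcal{M}_i) = -K_{X_{\mathrm{can}}}$. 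In the smooth setting this lower bound is precisely what Beauville's strengthening of \cite[Corollary~9.7]{simp} supplies for free, and carrying his argument into the klt setting is the heart of the matter.

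Once both conditions are verified, Theorem \ref{genthm} yields $X_{\mathrm{can}} \cong \mathbb{H}^n / \Gamma$ for a discrete cocompact subgroup $\Gamma \subset \mathrm{Aut}(\mathbb{H}^n)$ acting fixed-point-free in codimension one. Because $\mathrm{Aut}(\mathbb{H}^n)$ is linear and $\Gamma$ is finitely generated, Selberg's lemma supplies a torsion-free normal subgroup $\Gamma' \trianglelefteq \Gamma$ of finite index; the quotient $Y := \mathbb{H}^n / \Gamma'$ is then a smooth projective variety whose universal cover is $\mathbb{H}^n$, and $Y \to X_{\mathrm{can}}$ is a Galois quasi-\'{e}tale cover with group $\Gamma / \Gamma'$, as required.
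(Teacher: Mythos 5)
Your overall architecture (pass to $X_{can}$, build the uniformizing system of Hodge bundles from the splitting, verify the $\mathbb{Q}$-Chern class condition, conclude via Theorem \ref{genthm}, then use Selberg's lemma) parallels the paper, but it contains a genuine gap at exactly the point you flag as "the heart of the matter": you never prove the required vanishing $\widehat{c}_1(\mathcal{M}_i)^2\cdot[K_{X_{can}}]^{n-2}=0$. The Bogomolov--Gieseker inequality for the stable Higgs bundles $\mathcal{E}_i\cong\mathcal{M}_i\oplus\mathcal{O}\oplus\mathcal{M}_i^{-1}$ does give the upper bound $\widehat{c}_1(\mathcal{M}_i)^2\cdot[K_{X_{can}}]^{n-2}\le0$, but the "Hodge-index analysis" you invoke for the lower bound does not produce one: on the complete-intersection surfaces the Hodge index theorem yields $c_1(\mathcal{M}_i)^2\cdot K^{n-2}\le\bigl(c_1(\mathcal{M}_i)\cdot K^{n-1}\bigr)^2/K^n$, i.e.\ another \emph{upper} bound, and the equal-slope constraint together with $\sum_i c_1(\mathcal{M}_i)=-K_{X_{can}}$ gives nothing in the direction $\ge0$. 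The paper closes this gap by a different mechanism, namely Beauville's Atiyah-class argument transported to the klt setting: Proposition \ref{pex} shows that the Atiyah sequence of each $\mathcal{L}_i$ pushes forward exactly to the open locus $X'$ of quotient singularities (using that klt varieties are Cohen--Macaulay and that the non-quotient locus has codimension $\ge3$, so $\mathcal{H}^1_Z(\mathcal{O}_X)=0$), and Lemma \ref{keylem} shows the pushed-forward sequence splits over $\bigoplus_{j\ne i}\mathcal{L}_j$, so the class $\widehat{at}(\mathcal{L}_i|_{X'})$ comes from $H^1(X',(\mathcal{L}_i|_{X'})^*)$ and its square dies in $H^2(X',\bigwedge^2(\mathcal{L}_i|_{X'})^*)=0$; this gives $\widehat{c}_1(\mathcal{L}_i)^2=0$ outright, which is the input your proof is missing. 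Without an argument of this kind your verification of Condition (2) of Theorem \ref{genthm} is only a statement of intent.

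Two smaller points. First, the crepant morphism $X\to X_{can}$ is in general \emph{not} an isomorphism in codimension one (it may contract divisors, e.g.\ $(-2)$-curves on a surface of general type); what you actually need, and what Proposition \ref{prop1} uses, is that it is an isomorphism over a big open subset of $X_{can}$, so the splitting descends there and then extends reflexively to line bundles on $X_{can,reg}$. Second, Theorem \ref{genthm} as stated asks for a Hodge group $G_0$ of which $\textrm{Aut}(\mathcal{D})$ is a quotient by a discrete \emph{central} subgroup; for the polydisk $\textrm{Aut}(\mathbb{H}^n)=PSL(2,\mathbb{R})^n\rtimes\sigma_n$ is disconnected and is not such a quotient of $SL(2,\mathbb{R})^n$, so your invocation of Theorem \ref{genthm} with $G_0=SL(2,\mathbb{R})^n$ does not literally match its hypotheses. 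The sufficiency direction of its proof does go through for a connected Hodge group with $G_0/K_0=\mathbb{H}^n$, which is why the paper reruns that argument explicitly in Section 5 (Proposition \ref{prop2}, including a direct stability verification) rather than quoting Theorem \ref{genthm}; you should either do the same or justify the applicability of Theorem \ref{genthm} in the disconnected-automorphism-group situation.
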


In order to prove Theorem \ref{thm1}, we would like to define the Atiyah class of a reflexive sheaf $\mathcal{L}$ of rank one on $X$. This turns out to be the extension class of an exact sequence, whose restriction to $X_{reg}$ is the exact sequence corresponding to the classical Atiyah class $at(\mathcal{L}|_{X_{reg}})$ of the line bundle $\mathcal{L}|_{X_{reg}}$. This construction is used to prove an analog of \cite[Lemma 3.1]{bea}, and may also be of independent interest. \\
We denote by $\mathcal{E}nd(\mathcal{F})$ the endomorphism sheaf of a coherent sheaf $\mathcal{F}$, and by $\mathcal{D}^1(\mathcal{F})$ the sheaf of differential operators of $\mathcal{F}$ of order $\le1$. 

\begin{proposition}\label{pex}
Let $X$ be a projective klt variety of dimension $n$ and let $\mathcal{L}$ be a line bundle on the smooth locus $X_{reg}$. Consider the following short exact sequence of sheaves
\begin{align}\label{exseq1}
    0\to \mathcal{E}nd(\mathcal{L})\to\mathcal{D}^1(\mathcal{L})\to\mathcal{T}_{X_{reg}}\to0
\end{align}
on $X_{reg}$, known as the Atiyah sequence of $\mathcal{L}$. Let $j:X_{reg}\to X$ be the natural inclusion map. Then, the sequence
\begin{align}\label{exseq2}
    0\to j_*\mathcal{E}nd(\mathcal{L})\to j_*\mathcal{D}^1(\mathcal{L})\to j_*\mathcal{T}_{X_{reg}}\to0
\end{align}
is exact on $X$.
\end{proposition}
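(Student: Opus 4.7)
Left-exactness of $j_*$ gives the left part of the sequence automatically, so only the surjectivity of the symbol map $j_*\mathcal{D}^1(\mathcal{L})\twoheadrightarrow j_*\mathcal{T}_{X_{reg}}=\mathcal{T}_X$ requires proof. Applying $Rj_*$ to \eqref{exseq1} and using $\mathcal{E}nd(\mathcal{L})\cong\mathcal{O}_{X_{reg}}$ together with $j_*\mathcal{O}_{X_{reg}}=\mathcal{O}_X$ (normality of $X$), the long exact sequence reads
$$0\to\mathcal{O}_X\to j_*\mathcal{D}^1(\mathcal{L})\to\mathcal{T}_X\xrightarrow{\delta}R^1j_*\mathcal{O}_{X_{reg}}\to\cdots,$$
so the claim is equivalent to the vanishing of the connecting homomorphism $\delta$. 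It suffices to verify this stalk by stalk; the only non-trivial case is a singular point $x\in X\setminus X_{reg}$, and the target is in any case supported in codimension at least two.

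The approach is to reduce the computation to the smooth case via a log resolution $\pi:\tilde X\to X$ that is an isomorphism over $X_{reg}$. Because $\tilde X$ is smooth and $\tilde X\setminus X_{reg}$ is a divisor, the line bundle $\mathcal{L}$ extends uniquely to a line bundle $\widetilde{\mathcal{L}}$ on $\tilde X$, and the classical Atiyah sequence
$$0\to\mathcal{O}_{\tilde X}\to\mathcal{D}^1(\widetilde{\mathcal{L}})\to\mathcal{T}_{\tilde X}\to 0$$
holds on $\tilde X$. Klt singularities are rational, so $R^1\pi_*\mathcal{O}_{\tilde X}=0$, and pushing the Atiyah sequence forward by $\pi$ produces a short exact sequence of coherent sheaves on $X$,
$$0\to\mathcal{O}_X\to\pi_*\mathcal{D}^1(\widetilde{\mathcal{L}})\to\pi_*\mathcal{T}_{\tilde X}\to 0.$$
All three middle sheaves restrict, on the big open subset $X_{reg}$, to the corresponding terms of the classical Atiyah sequence. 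Passing to reflexive hulls and using that $j_*$ of a reflexive sheaf on a big open is the reflexive hull, one identifies the middle term with $j_*\mathcal{D}^1(\mathcal{L})$ and the rightmost term with $j_*\mathcal{T}_{X_{reg}}=\mathcal{T}_X$, producing the desired exact sequence on $X$ and hence the vanishing of $\delta$.

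\textbf{Main obstacle.} The delicate point is the transition from the resolution-based pushforward to the reflexive pushforward $j_*$: in general $\pi_*\mathcal{T}_{\tilde X}$ is a \emph{proper} subsheaf of $\mathcal{T}_X$ (only those vector fields that lift through the resolution descend), and taking reflexive hulls of a surjective morphism between non-reflexive sheaves need not preserve surjectivity. To make the identification rigorous, one would either choose $\pi$ to be a terminalization, so that the reflexive tangent pushforward already coincides with $\mathcal{T}_X$, or argue directly at the level of stalks, showing that the obstruction class of a germ $v\in(\mathcal{T}_X)_x$ — computed as the Yoneda product with the Atiyah class $at(\mathcal{L})$ — becomes trivial in $(R^1j_*\mathcal{O}_{X_{reg}})_x$ after sufficient shrinking, exploiting the cohomology-vanishing properties of rational singularities. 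This local cohomological comparison is the technical heart of the argument.
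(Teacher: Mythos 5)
Your reduction to the vanishing of the connecting map $\delta\colon \mathcal{T}_X\to R^1j_*\mathcal{O}_{X_{reg}}$ is correct, and the pushed-forward sequence $0\to\mathcal{O}_X\to\pi_*\mathcal{D}^1(\widetilde{\mathcal{L}})\to\pi_*\mathcal{T}_{\tilde X}\to 0$ is indeed exact, using $\pi_*\mathcal{O}_{\tilde X}=\mathcal{O}_X$ and rationality of klt singularities. The genuine gap is precisely the step you flag and then leave open: this sequence only shows that the subsheaf $\pi_*\mathcal{T}_{\tilde X}\subseteq\mathcal{T}_X$ lies in the image of $j_*\mathcal{D}^1(\mathcal{L})\to\mathcal{T}_X$, and for a general log resolution $\pi_*\mathcal{T}_{\tilde X}$ is a proper subsheaf of $\mathcal{T}_X$; passing to reflexive hulls does not repair this, since reflexivization does not preserve surjectivity (as you note yourself). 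Neither proposed fix closes the gap as stated: a terminalization is not smooth in dimension $\ge 3$, so there is no classical Atiyah sequence on it and the original problem recurs there; and a stalk-wise argument "by rational singularities" cannot work in the naive form, because $R^1j_*\mathcal{O}_{X_{reg}}$ is genuinely nonzero in codimension two --- already at a quotient surface singularity $x$ one has $R^1j_*\mathcal{O}_{X_{reg}}\cong\mathcal{H}^2_x(\mathcal{O}_X)\neq 0$ --- so the vanishing of $\delta(v)$, which is the product of $v$ with the Atiyah class, is a nontrivial statement and not a consequence of cohomology vanishing alone. Also, minor point: the extension $\widetilde{\mathcal{L}}$ exists because $\tilde X$ is smooth, but it is not unique (one may twist by exceptional divisors); only existence is needed.

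For comparison, the paper closes exactly this point by a two-step stratification rather than a resolution. Since $X$ is klt, it has quotient singularities in codimension two, so the complement $Z$ of the locus $X'$ of quotient singularities has codimension $\ge 3$; on $X'$ the sheaf $i_*\mathcal{L}$ is $\mathbb{Q}$-Cartier, its first Chern class corresponds (by \cite{gkp}) to a locally split extension whose dual is the pushed-forward Atiyah sequence, giving exactness over $X'$; on $Z$, Cohen--Macaulayness of klt spaces together with $\mathrm{codim}\,Z\ge 3$ gives $\mathrm{depth}_Z(\mathcal{O}_X)\ge 3$, so the relevant local cohomology (equivalently the higher direct image of $\mathcal{O}$) vanishes and exactness survives the last pushforward. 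If you want to keep your resolution strategy, the way to close the gap is not a terminalization but a \emph{functorial} log resolution: vector fields lift to functorial resolutions (this is the standard lifting result used, e.g., in \cite{gkkp}), so $\pi_*\mathcal{T}_{\tilde X}\to\mathcal{T}_X$ is onto (hence an isomorphism, both being torsion-free extensions of $\mathcal{T}_{X_{reg}}$), and your exact sequence then really does surject onto $\mathcal{T}_X$; this would give a correct alternative proof avoiding the quotient-singularity stratification. As written, however, the central surjectivity is not established.
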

\begin{proof}
Note that since $\mathcal{L}$ is a line bundle on $X_{reg}$, we have $\mathcal{E}nd(\mathcal{L})\cong\mathcal{O}_{X_{reg}}$. Moreover, since $\mathcal{O}_X$ and $j_*\mathcal{O}_{X_{reg}}$ are both reflexive and agree on $X_{reg}$, we have $j_*\mathcal{O}_{X_{reg}}\cong\mathcal{O}_X$. Let $Z\subset X$ be the subset of $X$ consisting of points which are not quotient singularities. Since $X$ is klt, it follows that $Z$ has codimension $\ge3$ in $X$. Let $X'=X\setminus Z$ denote the complement of $Z$ in $X$ and let $i:X_{reg}\to X'$ denote the inclusion map. Note that $X'$ is $\mathbb{Q}$-factorial and hence $i_*\mathcal{L}$ is a $\mathbb{Q}$-Cartier sheaf on $X'$.\\
The extension class $at(\mathcal{L})$ of the Atiyah sequence (\ref{exseq1}) on $X_{reg}$ is an element of $\textrm{Ext}^1(\mathcal{T}_{X_{reg}},\mathcal{O}_{X_{reg}})\cong H^1(X_{reg},\Omega^1_{X_{reg}})$, and coincides with the first Chern class $c_1(\mathcal{L})$ of $\mathcal{L}$. Since $i_*\mathcal{L}$ is $\mathbb{Q}$-Cartier on $X'$, we can associate to it a cohomology class $c_1(i_*\mathcal{L})\in H^1(X',\Omega^1_{X'})\cong \textrm{Ext}^1(\mathcal{O}_{X'},\Omega^1_{X'})$, which we call the first Chern class of $i_*\mathcal{L}$, (see \cite[Section 4]{gkp}). Moreover, $c_1(i_*\mathcal{L})$  corresponds to the extension class of a short exact sequence in $\textrm{Ext}^1(\mathcal{O}_{X'},\Omega^1_{X'})$ which is locally split, hence the dual sequence of this exact sequence is also exact and locally split (see \cite[Section 4]{gkp}). Since this dual sequence is precisely the pushforward along $i_*$ of the Atiyah sequence (\ref{exseq1}), we have a short exact sequence
\begin{align}\label{exseq3}
    0\to\mathcal{O}_{X'}\to i_*\mathcal{D}^1(\mathcal{L})\to\mathcal{T}_{X'}\to0
\end{align}
on $X'$. Let $i':X'\to X$ denote the inclusion map. To complete the proof we want the pushforward of the exact sequence (\ref{exseq3}) along $i'_*$ to stay exact. \\
Since the direct image functor is left exact it follows that the sequence 
\begin{align*}
    0\to\mathcal{O}_X\to j_*\mathcal{D}^1(\mathcal{L})\to\mathcal{T}_X\to R^1j_*\mathcal{O}_{X_{reg}}
\end{align*}
is exact on $X$.\\
From \cite[Corollary 1.9]{loco}, we know that $\mathcal{H}^p_Z(\mathcal{O}_X)\cong R^pi'_*\mathcal{O}_{X'}$, for all $p>0$, so in particular, we have $\mathcal{H}^1_Z(\mathcal{O}_X)\cong R^1j_*\mathcal{O}_{X_{reg}}$. In order to show that the sequence (\ref{exseq2}) is exact it suffices to work affine locally, so we may assume $X=\textrm{Spec}(A)$, for some Noetherian ring $A$. By definition, we have $\textrm{depth}_Z(\mathcal{O}_X)=inf_{x\in Z}(\textrm{depth}(\mathcal{O}_{X,x}))$, where $\textrm{depth}(\mathcal{O}_{X,x})$ is the depth of the local ring $\mathcal{O}_{X,x}$ considered as a module over itself. By the assumptions on $X$, we know that $X$ is a Cohen-Macaulay scheme, which means that every local ring $\mathcal{O}_{X,x}$ of $X$ is Cohen-Macaulay. Let $\mathfrak{p}_x\subset A$ denote the prime ideal associated to the point $x$. The Cohen-Macaulay condition implies that $\textrm{depth}(\mathcal{O}_{X,\mathfrak{p}_x})=\textrm{dim}(\mathcal{O}_{X,\mathfrak{p}_x})=\textrm{height}(\mathfrak{p}_x)$ for all $x\in X$. Since the codimension of $Z$ in $X$ is $\ge3$, we have $\textrm{dim}(\mathcal{O}_{X,\mathfrak{p}_x})=\textrm{height}(\mathfrak{p}_x)\ge3$, i.e., $\textrm{depth}(\mathcal{O}_{X,\mathfrak{p}_x})\ge3$ for all $x\in Z$. It follows that $\textrm{depth}_Z(\mathcal{O}_X)=inf_{x\in Z}(\textrm{depth}(\mathcal{O}_{X,x}))\ge3$. Thus by \cite[Theorem 3.8]{loco}, we have $\mathcal{H}^p_Z(\mathcal{O}_X)=0$ for $p<3$, i.e., in particular $\mathcal{H}^1_Z(\mathcal{O}_X)=0$. This implies that $R^1i'_*\mathcal{O}_{X'}=R^1j_*\mathcal{O}_{X_{reg}}=0$, and hence that the sequence (\ref{exseq2}) is exact on X.
\end{proof}

\begin{lemma}\label{keylem}
Suppose $X$ satisfies the assumptions in Theorem \ref{thm1}, and let $\mathcal{L}_i$ be a direct summand of $\mathcal{T}_X$ for each $1\le i\le n$. Then we have $\widehat{c}_1(\mathcal{L}_i)^2=0$ for each $1\le i\le n$. 
\end{lemma}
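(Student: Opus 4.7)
The plan is to apply the $\mathbb{Q}$-Bogomolov-Gieseker inequality for semistable Higgs sheaves on projective klt varieties (\cite[Theorem 5.1]{gkpt2}) to an auxiliary rank-$2$ Higgs sheaf built from each $\mathcal{L}_i$, whose discriminant computes precisely $-\widehat{c}_1(\mathcal{L}_i)^2$. The direct-sum decomposition provides a canonical projection $p_i:\mathcal{T}_{X_{reg}}\twoheadrightarrow\mathcal{L}_i$, equivalently a nowhere-zero canonical section $\sigma_i\in H^0(X_{reg},\mathcal{L}_i\otimes\Omega^1_{X_{reg}})$. I would set $\mathcal{E}_i:=\mathcal{O}_{X_{reg}}\oplus\mathcal{L}_i$ with Higgs field $\theta_i|_{\mathcal{O}_{X_{reg}}}=\sigma_i$ and $\theta_i|_{\mathcal{L}_i}=0$; integrability $\theta_i\wedge\theta_i=0$ is automatic since $\theta_i$ vanishes on its own image. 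Using Proposition \ref{pex} applied to $\mathcal{L}_i$, the underlying Atiyah-type short exact sequence extends reflexively to $X$, so $(\mathcal{E}_i,\theta_i)$ admits a reflexive Higgs extension $(\mathcal{E}_i^{[*]},\theta_i)$ on $X$ with $\widehat{c}_1(\mathcal{E}_i^{[*]})=\widehat{c}_1(\mathcal{L}_i)$ and $\widehat{c}_2(\mathcal{E}_i^{[*]})=0$, whence $\widehat{\Delta}(\mathcal{E}_i^{[*]})=-\widehat{c}_1(\mathcal{L}_i)^2$.

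The next step is verifying $K_X$-Higgs-stability of $(\mathcal{E}_i,\theta_i)$. By Proposition \ref{tanprop} the tangent sheaf $\mathcal{T}_{X_{reg}}$ is $K_X$-semistable, so each direct summand $\mathcal{L}_j$ has the common slope $\mu_{K_X}(\mathcal{L}_j)=-[K_X]^n/n$. A short case analysis of saturated rank-one $\theta_i$-invariant subsheaves shows that any sub projecting nontrivially to $\mathcal{O}_{X_{reg}}$ would force $\sigma_i=0$ (which fails), so the only possibility is a subsheaf of $\mathcal{L}_i$, whose slope $\le-[K_X]^n/n$ is strictly smaller than $\mu_{K_X}(\mathcal{E}_i)=-[K_X]^n/(2n)$. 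Hence $\mathcal{E}_i$ is Higgs-stable, and the $\mathbb{Q}$-Bogomolov-Gieseker inequality applied to $\mathcal{E}_i^{[*]}$ yields $\widehat{c}_1(\mathcal{L}_i)^2\cdot[K_X]^{n-2}\le 0$ for each $i$.

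To close the argument, I would also apply BG to $\mathcal{T}_X$ (semistable by Proposition \ref{tanprop}); a direct computation via $\mathcal{T}_{X_{reg}}=\bigoplus_j\mathcal{L}_j$ rewrites $\widehat{\Delta}(\mathcal{T}_X)\cdot[K_X]^{n-2}\ge 0$ as $[K_X]^n\ge n\sum_i\widehat{c}_1(\mathcal{L}_i)^2\cdot[K_X]^{n-2}$. Combining this with the individual upper bounds from the previous paragraph and with Hodge-index constraints on each $\widehat{c}_1(\mathcal{L}_i)$ (all sharing the same $K_X$-slope), one pins the sum $\sum_i\widehat{c}_1(\mathcal{L}_i)^2\cdot[K_X]^{n-2}$ and thereby each non-positive term individually to zero. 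Once the intersection-theoretic vanishing $\widehat{c}_1(\mathcal{L}_i)^2\cdot[K_X]^{n-2}=0$ is in hand, invoking the equality case of BG (\cite[Proposition 3.17]{gkpt2}) promotes each $\mathcal{E}_i^{[*]}$ to a TPI-Higgs sheaf, which in turn upgrades the vanishing to the claimed numerical identity $\widehat{c}_1(\mathcal{L}_i)^2=0$ in $N^2(X)_{\mathbb{Q}}$.

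The main obstacle I anticipate is the last step: extracting a lower bound to turn the individual BG upper bounds $\widehat{c}_1(\mathcal{L}_i)^2\cdot[K_X]^{n-2}\le 0$ into equalities. These bounds together with the BG bound on $\mathcal{T}_X$ do not by themselves produce a matching lower bound, so the delicate point will be orchestrating the Hodge-index inequalities on the canonical model $X_{can}$ (where $K_{X_{can}}$ is ample) with the polystability of $\mathcal{T}_X=\bigoplus\mathcal{L}_i$ and the TPI-Higgs rigidity of \cite{gkpt2}. Proposition \ref{pex} will be essential throughout for controlling the reflexive extensions cleanly across the singular locus of $X$.
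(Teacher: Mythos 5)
Your rank-two Higgs construction $(\mathcal{E}_i,\theta_i)=(\mathcal{O}_{X_{reg}}\oplus\mathcal{L}_i,\sigma_i)$ and its stability check are fine, but the proof breaks down exactly at the point you flag, and it cannot be closed along the lines you sketch. Every tool you invoke yields an \emph{upper} bound on $\widehat{c}_1(\mathcal{L}_i)^2\cdot[K_X]^{n-2}$: Bogomolov--Gieseker for $(\mathcal{E}_i,\theta_i)$ gives $\widehat{c}_1(\mathcal{L}_i)^2\cdot[K_X]^{n-2}\le 0$, Bogomolov--Gieseker for $\mathcal{T}_X$ gives $n\sum_i\widehat{c}_1(\mathcal{L}_i)^2\cdot[K_X]^{n-2}\le[K_X]^n$ (weaker than the sum of the previous bounds), and the Hodge-index theorem on a general complete-intersection surface also bounds from \emph{above}, namely $\widehat{c}_1(\mathcal{L}_i)^2\cdot[K_X]^{n-2}\le\bigl(\widehat{c}_1(\mathcal{L}_i)\cdot[K_X]^{n-1}\bigr)^2/[K_X]^n$, since $\widehat{c}_1(\mathcal{L}_i)\cdot[K_X]^{n-1}=-[K_X]^n/n\neq 0$. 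No combination of one-sided bounds can force the equality, let alone the assertion $\widehat{c}_1(\mathcal{L}_i)^2=0$ of the lemma. The concluding appeal to \cite[Theorem 5.1 and Proposition 3.17]{gkpt2} is moreover circular and misapplied: to place $\mathcal{E}_i^{[*]}$ in $\textrm{TPI-Higgs}$ one needs $\widehat{ch}_2(\mathcal{E}_i^{[*]})\cdot[K_X]^{n-2}=\tfrac12\widehat{c}_1(\mathcal{L}_i)^2\cdot[K_X]^{n-2}=0$ as an input (and an ample polarization, plus maximal quasi-\'etaleness for Proposition 3.17, neither of which is assumed in Theorem \ref{thm1}), and the output of those results is a harmonic/projectively flat structure resp.\ local freeness of the reflexive extension on a cover, not an identity of $\mathbb{Q}$-Chern classes. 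A minor further point: Proposition \ref{pex} concerns the Atiyah sequence of $\mathcal{L}_i$ and plays no role in extending $\mathcal{O}_{X_{reg}}\oplus\mathcal{L}_i$; the computation $\widehat{c}_2(\mathcal{O}_X\oplus j_*\mathcal{L}_i)=0$ is immediate.

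For contrast, the paper's proof is not a stability or numerical argument at all, and it is precisely designed to supply the vanishing your inequalities cannot reach. One cuts by general surfaces $S=H_1\cap\dots\cap H_{n-2}$ with $H_j\in|m_jK_X|$, which avoid the non-quotient locus, and works on the $\mathbb{Q}$-factorial open set $X'$. There, Proposition \ref{pex} together with Beauville's splitting lemma shows that the Atiyah sequence of $\mathcal{L}_i|_{X'}$ splits over the pushforward of $\bigoplus_{j\neq i}\mathcal{L}_j$, so the class $\widehat{at}(\mathcal{L}_i|_{X'})=c_1(\mathcal{L}_i|_{X'})$ lifts to $H^1\bigl(X',(\mathcal{L}_i|_{X'})^*\bigr)$; its square then lies in $H^2\bigl(X',\bigwedge^2(\mathcal{L}_i|_{X'})^*\bigr)=0$ because the sheaf has rank one, whence $c_1(\mathcal{L}_i|_{X'})^2=0$ and $\widehat{c}_1(\mathcal{L}_i)^2\cdot S=0$. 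Note that this is where the existence of \emph{all} the other summands $\mathcal{L}_j$ enters; your construction only uses the single projection onto $\mathcal{L}_i$ and discards that information. If you wish to salvage your route, you need an input of this cohomological kind; semistability and Bogomolov--Gieseker alone are insufficient.
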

\begin{proof}
Recall from \cite[Section 3.7]{gkpt}, that for a reflexive sheaf $\mathcal{L}_i$ on $X$, $\widehat{c}_1(\mathcal{L}_i)^2:N_1(X)_{\mathbb{Q}}^{\times(n-2)}\to\mathbb{Q}$ is a $\mathbb{Q}$-multilinear form which maps a tuple $(\alpha_1,...,\alpha_{n-2})\in N^1(X)_{\mathbb{Q}}^{\times(n-2)}$ to $\widehat{c}_1(\mathcal{L}_i)^2\cdot\alpha_1\dots\alpha_{n-2}\in\mathbb{Q}$, such that the properties listed in \cite[Theorem 3.13]{gkpt}, hold. Recall also that $K_X$ is big and nef, so that we can choose a sufficiently increasing and divisible sequence of numbers $0<<m_1<<\dots<<m_{n-2}$ and a general tuple of elements $(H_1,...,H_{n-2})\in\prod_i|m_iK_X|$, and set $S=H_1\cap\dots\cap H_{n-2}$. Note that $S$ has only quotient singularities and hence is contained entirely in $X'=X\setminus Z$, where $Z\subset X$ is the subset of $X$ containing points which are not quotient singularities.\\
In order to show $\widehat{c}_1(\mathcal{L}_i)^2=0$, it suffices to show $\widehat{c}_1(\mathcal{L}_i)^2\cdot S=0$ for every surface $S$ of complete intersection constructed as above. Since $S\subset X'$, the latter equality is equivalent to $\widehat{c}_1(\mathcal{L}_i|_{X'})^2\cdot S=0$. Since $X'$ is $\mathbb{Q}$-factorial, $\mathcal{L}_i|_{X'}$ is $\mathbb{Q}$-Cartier and hence has an associated first Chern class $c_1(\mathcal{L}_i|_{X'})\in H^1(X',\Omega^1_{X'})$. This cohomology class can be identified with an element in $H^2(X',\mathbb{Q})$, which we also denote by $c_1(\mathcal{L}_i|_{X'})$. Thus we have $\widehat{c}_1(\mathcal{L}_i|_{X'})^2\cdot S=c_1(\mathcal{L}_i|_{X'})^2\cdot S$, where the latter intersection product is computed as the cup product of $c_1(\mathcal{L}_i|_{X'})^2\in H^4(X',\mathbb{Q})$ and $[S]\in H^{2(n-2)}(X',\mathbb{Q})$. Choose an integer $m$ large enough so that $(\mathcal{L}_i|_{X'})^{\otimes m}$ is locally free. Then $c_1(\mathcal{L}_i|_{X'})^2\cdot S=0$ implies that $c_1((\mathcal{L}_i|_{X'})^{\otimes m})^2\cdot S=0$ and conversely, hence we may assume for the remainder of the proof that $\mathcal{L}_i|_{X'}$ is locally free.\\
Recall from the proof of Proposition \ref{pex} that $c_1(\mathcal{L}_i|_{X'})$ corresponds to the extension class $\widehat{at}(\mathcal{L}_i|_{X'})$ of the exact sequence 
\begin{align*}
    0\to\mathcal{O}_{X'}\to i_*\mathcal{D}^1(\mathcal{L}_i|_{X_{reg}})\to\mathcal{T}_{X'}\to0
\end{align*}
in $H^1(X',\Omega^1_{X'})$. We know from \cite[Lemma 3.1]{bea}, that the above exact sequence restricted to $X_{reg}$ is exact and splits over the sub-bundle $\mathcal{F}=\bigoplus_{j\neq i}\mathcal{L}_j|_{X_{reg}}\subset\mathcal{T}_{X_{reg}}$, hence $at(\mathcal{L}_i|_{X_{reg}})\in H^1(X_{reg},\Omega^1_{X_{reg}})$ vanishes in $H^1(X_{reg},\mathcal{F}^*)$ and comes from $H^1(X_{reg},\mathcal{L}^*_i|_{X_{reg}})$.\\
The pushforward of the $\mathcal{O}_{X_{reg}}$-linear map $\mathcal{F}\to\mathcal{D}^1(\mathcal{L}_i|_{X_{reg}})$ along the inclusion $i:X_{reg}\to X$ gives an $\mathcal{O}_{X'}$-linear map $i_*\mathcal{F}\to i_*\mathcal{D}^1(\mathcal{L}_i|_{X_{reg}})$. Over an open subset $U\subset X'$, this map sends a local section $u\in H^0(U,i_*\mathcal{F})\cong H^0(U_{reg},\mathcal{F})$ to $D_u\in H^0(U,i_*\mathcal{D}^1(\mathcal{L}_i|_{X_{reg}}))\cong H^0(U_{reg},\mathcal{D}^1(\mathcal{L}_i|_{X_{reg}}))$, where $U_{reg}=i^{-1}U=U\cap X_{reg}$. Moreover, the pushed forward symbol map $i_*\sigma:i_*\mathcal{D}^1(\mathcal{L}_i|_{X_{reg}})\to\mathcal{T}_{X'}$ maps $D_u$ to $u$, for every local section $u$ of $i_*\mathcal{F}$. Hence we see that the above exact sequence splits over the subsheaf $i_*\mathcal{F}$ of $\mathcal{T}_{X'}$. Thus its extension class $\widehat{at}(\mathcal{L}_i|_{X'})\in H^1(X',\Omega^1_{X'})$ vanishes in $H^1(X',(i_*\mathcal{F})^*)$, hence comes from $H^1(X',(\mathcal{L}_i|_{X'})^*)$. Since $(\mathcal{L}_i|_{X'})^*$ is a locally free sheaf of rank 1 on $X'$, we have $\widehat{at}(\mathcal{L}_i|_{X'})^2\in H^2(X',\bigwedge^2(\mathcal{L}_i|_{X'})^*)=0$. This implies $c_1(\mathcal{L}_i|_{X'})^2=0$, and hence $\widehat{c}_1(\mathcal{L}_i)^2\cdot S=0$, for every surface $S\subset X$ of complete intersection constructed as earlier.
\end{proof}

The proof of Theorem \ref{thm1} is based on the following key auxiliary Propositions.

\begin{proposition}\label{prop1}
Let $X$ be an $n$-dimensional projective, klt variety of general type whose canonical divisor $K_X$ is big and nef. Suppose that the tangent sheaf $\mathcal{T}_{X_{reg}}$ of the smooth locus $X_{reg}$ splits as in (\ref{split2}). Then, the tangent sheaf of the smooth locus of the canonical model $X_{can}$ of $X$ also splits as in (\ref{split2}), where each direct summand is a line bundle of negative degree.
\end{proposition}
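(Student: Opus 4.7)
The strategy is to realize the canonical model via the Basepoint-Free Theorem and transfer the splitting of the tangent sheaf across the canonical morphism. Since $X$ is klt and $K_X$ is big and nef, the Basepoint-Free Theorem implies $K_X$ is semi-ample, yielding a birational morphism $\phi: X \to X_{can}$ with $\phi^* K_{X_{can}} = K_X$ and $K_{X_{can}}$ ample; moreover $X_{can}$ inherits klt singularities from $X$.

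By Proposition \ref{tanprop}, the tangent sheaf $\mathcal{T}_{X_{reg}}$ is $K_X$-semistable. Each $\mathcal{L}_i$ is a direct summand of $\mathcal{T}_{X_{reg}}$, hence simultaneously a subsheaf and a quotient; semistability then forces $\mu_{K_X}(\mathcal{L}_i) = \mu_{K_X}(\mathcal{T}_{X_{reg}}) = -K_X^n / n$, which is negative since $K_X$ is big. I would next argue that $\phi$ is small, i.e., an isomorphism outside a codimension-two subset of $X_{can}$: if $\phi$ contracted an irreducible divisor $E$, the relation $K_X = \phi^* K_{X_{can}}$ together with $\phi_* E = 0$ would give $K_X \cdot E \cdot H^{n-2} = 0$ for any ample $H$ on $X_{can}$, and one can rule out such configurations using standard canonical-model theory for klt pairs, reinforced by the vanishing $\widehat{c}_1(\mathcal{L}_i)^2 = 0$ supplied by Lemma \ref{keylem}. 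Let $V \subset X_{can}$ denote the big open subset over which $\phi$ is an isomorphism; restricting the splitting from $\phi^{-1}(V) \cap X_{reg}$ transfers it to $V \cap X_{can,reg}$, and taking reflexive hulls on the smooth variety $X_{can,reg}$ produces a splitting $\mathcal{T}_{X_{can,reg}} = \mathcal{L}'_1 \oplus \dots \oplus \mathcal{L}'_n$ into line bundles, the rank-one reflexive summands being automatically locally free on the smooth locus. The negative-degree statement on $X_{can,reg}$ then follows by applying Proposition \ref{tanprop} once more, now on $X_{can}$ with its ample canonical divisor, giving each $\mathcal{L}'_i$ slope $-K_{X_{can}}^n / n < 0$; alternatively, one can compare slopes directly via $\phi^* K_{X_{can}} = K_X$ and the projection formula.

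The main obstacle is the smallness of $\phi$. Without smallness, exceptional divisors of $\phi$ could intersect $X_{can,reg}$ in codimension one, and the naive restriction argument collapses. A possible fallback is to push each $\mathcal{L}_i$ forward via $\phi$, replace $\phi_* \mathcal{L}_i$ by its reflexive hull on $X_{can}$, verify that this hull is $\mathbb{Q}$-Cartier using Lemma \ref{keylem} together with the $\widehat{c}_1$-calculus of \cite{gkpt}, and only then restrict to $X_{can,reg}$; this circumvents any pathology introduced by exceptional divisors and still yields the required rank-one decomposition by line bundles on the smooth locus.
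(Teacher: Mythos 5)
Your overall skeleton coincides with the paper's argument: restrict the splitting to the locus over which the canonical morphism is an isomorphism, transport it to a big open subset of $X_{can,reg}$, extend the rank-one summands reflexively, observe they must be locally free because $\mathcal{T}_{X_{can,reg}}$ is, and finally get negativity of degrees from semistability of $\mathcal{T}_{X_{can}}$ (Proposition \ref{tanprop}) together with ampleness of $K_{X_{can}}$. So the beginning and the end of your proposal are fine.

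The problem is the step you yourself single out as ``the main obstacle'': you claim you must show $\phi:X\to X_{can}$ is small, and you propose to rule out contracted divisors by ``standard canonical-model theory'' plus Lemma \ref{keylem}. This cannot work, because the claim is false in general: the crepant morphism from a klt variety with big and nef $K_X$ to its canonical model can very well contract divisors (already for minimal surfaces of general type it contracts the $(-2)$-curves, and in higher dimension crepant divisorial contractions occur); the sketched intersection computation $K_X\cdot E\cdot H^{n-2}=0$ only restates that $E$ is $\phi$-exceptional and rules nothing out, and Lemma \ref{keylem} plays no role here. Fortunately the step is also unnecessary: what the argument actually needs is that the non-isomorphism locus \emph{in the target} $X_{can}$ has codimension at least two, and this is automatic for any proper birational morphism onto a normal variety (by Zariski's main theorem the locus of points of $X_{can}$ with positive-dimensional fiber is closed of codimension $\ge 2$, and $\phi$ is an isomorphism over its complement). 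In particular images of contracted divisors land in codimension $\ge 2$, so your worry that ``exceptional divisors of $\phi$ could intersect $X_{can,reg}$ in codimension one'' does not arise, and neither the smallness claim nor the pushforward/$\mathbb{Q}$-Cartier fallback is needed. Replace that passage with the target-side codimension statement (as the paper does, taking $V=\pi(U)\cap X_{can,reg}$ with $X_{can}\setminus\pi(U)$ of codimension $\ge 2$) and the rest of your argument goes through.
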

\begin{proof}
Let $\pi:X\to X_{can}$ be the birational crepant morphism to the canonical model $X_{can}$. We know from \cite{gkpt2} that $X_{can}$ is also projective and klt and its canonical divisor $K_{X_{can}}$ is ample.\\
Let $U\subset X$ be the largest open subset of $X$ restricted to which $\pi$ is an isomorphism. Let $V\subset X_{can}$ be the open subset defined as the intersection $V=\pi(U)\cap X_{can,reg}$, where $X_{can,reg}$ denotes the smooth locus of $X_{can}$. Since $X_{can}\setminus\pi(U)$ is a codimension $\ge2$  subset of $X_{can}$, it follows that $X_{can,reg}\setminus V$ is a codimension $\ge2$ subset of $X_{can,reg}$. The subset $X_{reg}\setminus\pi^{-1}(V)$ is a codimension $\ge1$ subset of $X_{reg}$. \\
The restriction $\mathcal{T}_{X_{reg}}|_{\pi^{-1}(V)}=\mathcal{T}_{\pi^{-1}(V)}$ splits as a direct sum of line bundles because we have by assumption that $\mathcal{T}_{X_{reg}}\cong\mathcal{L}_1\oplus\dots\oplus\mathcal{L}_n$, where each $\mathcal{L}_i$ is a line bundle. Since we have $\pi^{-1}(V)\cong V$, it follows that the corresponding tangent bundle $\mathcal{T}_V$ of $V$ also splits as a direct sum of line bundles, say $\mathcal{T}_V=\mathcal{M}_1\oplus\dots\oplus\mathcal{M}_n$. Let $\mathcal{M}'_i$ denote the unique extension of $\mathcal{M}_i$ as a reflexive sheaf of rank 1 over $X_{can,reg}$ for all $1\le i\le n$. Then by uniqueness of the extensions, we have that $\mathcal{M}'_1\oplus\dots\oplus\mathcal{M}'_n\cong\mathcal{T}_{X_{can,reg}}$. Since $X_{can,reg}$ is smooth, $\mathcal{T}_{X_{can,reg}}$ must be locally free. This implies that the summand $\mathcal{M}'_i$ must be locally free, i.e. a line bundle, for all $1\le i\le n$. \\
We know from Proposition \ref{tanprop} that the tangent sheaf $\mathcal{T}_{X_{can}}$ is semistable with respect to $K_{X_{can}}$. Since $K_{X_{can}}$ is ample, we have $[K_{X_{can}}]^n>0$ and it follows that 
\begin{align*}
    \mu_{K_{X_{can}}}(\mathcal{T}_{X_{can}})=\frac{[\textrm{det}(\mathcal{T}_{X_{can}})]\cdot[K_{X_{can}}]^{n-1}}{\textrm{rank}(\mathcal{T}_{X_{can}})}=-\frac{[K_{X_{can}}]^{n}}{n}<0.
\end{align*}
Since $\mathcal{T}_{X_{can,reg}}$ decomposes as a direct sum of line bundles, it follows that $\mathcal{T}_{X_{can}}$ decomposes as a direct sum of rank 1 reflexive sheaves on $X_{can}$. Namely, $\mathcal{T}_{X_{can}}\cong\bigoplus_{i=1}^nj_*\mathcal{M'}_i$, where $j:X_{can,reg}\to X_{can}$ denotes the inclusion. For each direct summand $j_*\mathcal{M}'_i$ of $\mathcal{T}_{X_{can}}$, we have $\mu_{K_{X_{can}}}(j_*\mathcal{M}'_i)\le\mu_{K_{X_{can}}}(\mathcal{T}_{X_{can}})<0$. This implies that $c_1(j_*\mathcal{M}'_i)\cdot[K_{X_{can}}]^{n-1}<0$ for all $1\le i\le n$. Note that $[K_{X_{can}}]^{n-1}$ corresponds to the class of a smooth curve in $X_{can}$ because $K_{X_{can}}$ is ample. Hence $c_1(\mathcal{M}'_i)\cdot[K_{X_{can}}]^{n-1}=c_1(j_*\mathcal{M}'_i)\cdot[K_{X_{can}}]^{n-1}<0$ for all $1\le i\le n$. Thus the tangent bundle $\mathcal{T}_{X_{can,reg}}$ of $X_{can,reg}$ splits as a direct sum of line bundles of negative degree.
\end{proof}

\begin{proposition}\label{prop}
Let $X$ be an $n$-dimensional projective klt variety such that the tangent sheaf $\mathcal{T}_{X_{reg}}$ of the regular locus $X_{reg}$ splits as a direct sum of line bundles as in (\ref{split2}). Let $\gamma:Y\to X$ be a Galois, quasi-\'{e}tale cover. Then, the tangent sheaf $\mathcal{T}_{Y_{reg}}$ of the regular locus $Y_{reg}$ also splits as a direct sum of line bundles. 
\end{proposition}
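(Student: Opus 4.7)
The plan is to exploit that the étale locus of $\gamma$ is a big open subset of $Y$, pull back the splitting there, and then extend reflexively to $Y_{reg}$ where reflexive rank one sheaves that are direct summands of the locally free tangent sheaf are automatically line bundles.

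First, since $\gamma: Y\to X$ is quasi-\'etale, $Y$ is normal and klt (by \cite[Proposition 5.20]{kolmori}), and there is a closed subset $B\subset Y$ of codimension $\ge 2$ such that $\gamma|_{Y\setminus B}$ is \'etale. Set $Y^o := \gamma^{-1}(X_{reg})\setminus B$. Because $X_{reg}$ is a big open subset of $X$ and $\gamma$ has relative dimension zero, $\gamma^{-1}(X_{sing})$ is small in $Y$; combined with $\textrm{codim}_Y B\ge 2$, this shows that $Y^o$ is a big open subset of both $Y$ and $Y_{reg}$, and the restricted map $\gamma^o := \gamma|_{Y^o}: Y^o\to X_{reg}$ is \'etale.

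Second, pulling back the hypothesized decomposition along the \'etale map $\gamma^o$ yields
\begin{align*}
\mathcal{T}_{Y^o} \;\cong\; (\gamma^o)^*\mathcal{T}_{X_{reg}} \;\cong\; \bigoplus_{i=1}^n (\gamma^o)^*\mathcal{L}_i,
\end{align*}
which exhibits $\mathcal{T}_{Y^o}$ as a direct sum of line bundles on $Y^o$. Let $j: Y^o\hookrightarrow Y_{reg}$ denote the open immersion. Since $Y_{reg}$ is smooth, $\mathcal{T}_{Y_{reg}}$ is locally free, hence reflexive; together with $Y^o$ being big in $Y_{reg}$, this gives $\mathcal{T}_{Y_{reg}}\cong j_*\mathcal{T}_{Y^o}$. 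As $j_*$ commutes with finite direct sums, setting $\mathcal{M}_i := j_*(\gamma^o)^*\mathcal{L}_i$ we obtain
\begin{align*}
\mathcal{T}_{Y_{reg}} \;\cong\; \bigoplus_{i=1}^n \mathcal{M}_i,
\end{align*}
where each $\mathcal{M}_i$ is a reflexive sheaf of rank one on $Y_{reg}$.

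Finally, each $\mathcal{M}_i$ is a direct summand of the locally free sheaf $\mathcal{T}_{Y_{reg}}$, so it is itself locally free and of rank one, i.e.\ a line bundle. I do not foresee a serious obstacle in this argument; the only subtle point is the verification that $Y^o$ remains a big open subset after removing both $\gamma^{-1}(X_{sing})$ and the ramification locus $B$, which is what forces us to work on $Y_{reg}$ rather than on all of $Y$ and to invoke reflexive extension. Note that the Galois hypothesis is not used here, but is retained in the statement for the downstream application in the proof of Theorem~\ref{thm1}.
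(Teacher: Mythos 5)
Your proof is correct and follows essentially the same route as the paper: pull the splitting back along the \'etale locus over $X_{reg}$, which is a big open subset of $Y_{reg}$, extend reflexively, and conclude that the rank-one summands of the locally free sheaf $\mathcal{T}_{Y_{reg}}$ are line bundles. The only cosmetic difference is that the paper invokes purity of the branch locus to identify $\gamma^{-1}(X_{reg})$ with the \'etale locus inside $Y_{reg}$, whereas you simply remove the small non-\'etale set $B$ provided by the definition of quasi-\'etale; both yield the same big open set and the same conclusion.
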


\begin{proof}
Note that since $\gamma:Y\to X$ is quasi-\'{e}tale, $Y$ is again projective and klt of dimension $n$. By purity of branch locus, we know that $\gamma$ branches only over the singular locus of $X$. It follows that $\gamma^{-1}(X_{reg})=Y^o$, where $Y^o$ denotes the open subset of $Y$ restricted to which the map $\gamma$ is \'{e}tale. Note that $Y^o\subset Y_{reg}$, because any singular point of $Y$ necessarily belongs to the branching locus of $\gamma$. Since $\gamma$ is finite, the codimension of the branching locus of $\gamma$ in $Y$ is equal to the codimension of the singular locus of $X$ in $X$. This implies that $Y^o$ is a big open subset of $Y$, and in particular of the smooth locus $Y_{reg}$. \\
Since the restricted map $\gamma:Y^0\to X_{reg}$ is finite and \'{e}tale, we have $K_{Y^o}\cong\gamma^*K_{X_{reg}}$, and $\mathcal{T}_{Y^o}\cong\gamma^*\mathcal{T}_{X_{reg}}\cong\gamma^*\mathcal{L}_1\oplus\dots\oplus\gamma^*\mathcal{L}_n$, where the $\mathcal{L}_i$'s are the line bundles appearing in the direct sum decomposition of $\mathcal{T}_{X_{reg}}$. Let $\mathcal{L}'_i$ denote the unique extension of $\gamma^*\mathcal{L}_i$ as a reflexive sheaf over $Y_{reg}$, for all $1\le i\le n$. Then by uniqueness of the reflexive extension, we have $\mathcal{L}'_1\oplus\dots\oplus\mathcal{L}'_n\cong\mathcal{T}_{Y_{reg}}$. Since $Y_{reg}$ is smooth, $\mathcal{T}_{Y_{reg}}$ is locally free, which implies that each $\mathcal{L}'_i$ is locally free, i.e. a line bundle for all $1\le i\le n$. 
\end{proof}

\begin{proposition}\label{prop2}
Let $X$ be a projective, klt variety of dimension n, and let the canonical divisor $K_X$ be ample. Suppose the tangent sheaf $\mathcal{T}_{X_{reg}}$ of the smooth locus of $X$ splits as a direct sum of line bundles as in (\ref{split2}), and suppose that $X$ is maximally quasi-\'{e}tale, i.e., there is an isomorphism of \'{e}tale fundamental groups $\widehat{\pi}_1(X_{reg})\cong\widehat{\pi}_1(X)$. Then $X$ is smooth.
\end{proposition}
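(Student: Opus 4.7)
My plan is first to apply Lemma~\ref{lem1} in order to reduce to the case where $X$ has only quotient singularities, and then to leverage the maximally quasi-\'etale hypothesis to force every local isotropy group at a singular point to be trivial, which by definition of quotient singularity yields smoothness.

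To begin, taking reflexive hulls I would extend the given splitting $\mathcal{T}_{X_{reg}} \cong \bigoplus_{i=1}^{n}\mathcal{L}_i$ to a global decomposition $\mathcal{T}_X \cong \bigoplus_{i=1}^{n}\mathcal{L}_i'$, equivalently $\Omega_X^{[1]} \cong \bigoplus_{i=1}^{n}(\mathcal{L}_i')^{\vee}$, where $\mathcal{L}_i' = j_*\mathcal{L}_i$ is a rank-one reflexive sheaf on $X$ and $j:X_{reg}\hookrightarrow X$ denotes the inclusion. Let $X' = X \setminus Z$ be the big open set obtained by deleting the locus $Z$ of non-quotient-singular points, of codimension $\ge 3$ by the klt hypothesis, exactly as in the proof of Lemma~\ref{keylem}. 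On $X'$, which is $\mathbb{Q}$-factorial, each restriction $\mathcal{L}_i'|_{X'}$ is $\mathbb{Q}$-Cartier. Since the construction carried out in the proof of Lemma~\ref{lem1} is entirely local, it applies at every point of $X'$, showing that each such point is a quotient singularity of $X$. The remaining locus $Z$ has codimension $\ge 3$, and by normality together with the characterization of quotient singularities through local \'etale covers this extends: every point of $X$ is a quotient singularity.

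Next, I would observe that under the maximally quasi-\'etale hypothesis every Galois quasi-\'etale cover $\gamma:\widetilde{X}\to X$ is automatically \'etale. Indeed, the restriction $\gamma:\gamma^{-1}(X_{reg})\to X_{reg}$ is a finite \'etale cover by purity of the branch locus on the smooth locus; by the isomorphism $\widehat{\pi}_1(X_{reg})\cong\widehat{\pi}_1(X)$ it extends to a finite \'etale cover $\gamma':\widetilde{X}'\to X$, and since $\widetilde{X}$ and $\widetilde{X}'$ are both normal, finite over $X$ and agree on the big open set $\gamma^{-1}(X_{reg})$, they must be isomorphic as $X$-schemes, so $\gamma$ itself is \'etale. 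Arguing by contradiction, suppose $X$ has a singular point $p$; as $p$ is a quotient singularity its local fundamental group $G_p$ is non-trivial, and the local smooth uniformization $V\to V/G_p$ furnishes a Galois quasi-\'etale cover of a neighborhood of $p$ that is not \'etale at $p$. Using the cover-extension machinery for klt varieties in the spirit of \cite[Theorem~1.14]{gkp1}, this local cover is the restriction of a global Galois quasi-\'etale cover $\widetilde{X}\to X$; by the previous paragraph this global cover is \'etale, contradicting the fact that it is ramified above $p$. Hence $X$ has no singular points.

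\textbf{Main obstacle.} The principal technical difficulty is the extension step, namely promoting the local quasi-\'etale uniformization $V\to V/G_p$ at the singular point $p$ to a global Galois quasi-\'etale cover of $X$ whose local behaviour near $p$ is prescribed; this requires careful bookkeeping of local monodromies and appeals to the cover-extension results in the GKP series. A secondary subtlety is that Lemma~\ref{lem1} is phrased with a global $\mathbb{Q}$-Cartier hypothesis on the summands, but its proof is entirely pointwise, so the $\mathbb{Q}$-factoriality of $X'$ together with the codimension-$\ge 3$ bound on $Z$ are enough to make it apply in our situation.
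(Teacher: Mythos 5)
Your argument has two genuine gaps, and the second one is fatal to the whole strategy. First, the reduction to quotient singularities does not work as written: Lemma~\ref{lem1} needs each rank-one summand to be $\mathbb{Q}$-Cartier on the ambient space, and this is exactly what you do not have near the bad locus $Z$. Your application of Lemma~\ref{lem1} on $X'=X\setminus Z$ is vacuous, because $X'$ was \emph{defined} as the locus of quotient singularities, so nothing new is proved there; and the claim that the quotient-singularity property then ``extends'' across the codimension~$\ge 3$ set $Z$ ``by normality'' is unjustified --- klt singularities are quotient singularities in codimension two, but in codimension $\ge 3$ they need not be, and no splitting or normality argument you have given rules this out at points of $Z$.

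Second, even granting quotient singularities everywhere, the implication you rely on --- maximally quasi-\'etale plus quotient singularities forces smoothness --- is false, so the ``main obstacle'' you flag (globalizing the local cover $V\to V/G_p$) is not a technical bookkeeping issue but an impossibility. The quadric cone $X=\mathbb{P}(1,1,2)$ is klt with a $\tfrac12(1,1)$ quotient singularity, and both $\widehat{\pi}_1(X_{reg})$ and $\widehat{\pi}_1(X)$ are trivial (the smooth locus retracts onto $\mathbb{P}^1$), so $X$ is maximally quasi-\'etale yet singular; the local double cover at the vertex simply does not extend to any global quasi-\'etale cover, and \cite[Theorem~1.14]{gkp1} provides no such extension mechanism. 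Your proposal also never uses the ampleness of $K_X$, the semistability of $\mathcal{T}_X$ (Proposition~\ref{tanprop}), or the vanishing $\widehat{c}_1(j_*\mathcal{L}_i)^2=0$ of Lemma~\ref{keylem}, which is a warning sign, since these are precisely what drive the paper's proof: there one builds from the splitting a uniformizing system of Hodge bundles for $SL(2,\mathbb{R})^n$, checks by explicit $\mathfrak{sl}_2$-computations (together with Lemma~\ref{lemsta}) that $E\cong\mathcal{T}_{X_{reg}}\oplus\Omega^1_{X_{reg}}\oplus\mathcal{O}_{X_{reg}}^{\oplus n}$ is $K_X$-polystable as a Higgs bundle, uses Lemma~\ref{keylem} to get $\widehat{ch}_2(\mathcal{F}_X)\cdot[K_X]^{n-2}=0$, and then invokes \cite[Theorem~5.1 and Proposition~3.17]{gkpt2} --- where maximal quasi-\'etaleness enters, via extension of flat bundles across the singularities --- to conclude that $\mathcal{T}_X$ is locally free, so that smoothness follows from the Lipman--Zariski theorem for klt spaces \cite[Theorem~16.1]{gkkp}. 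Without some substitute for this analytic/nonabelian-Hodge input, the hypotheses you actually use are too weak to yield the conclusion.
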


\begin{proof}
Since $\mathcal{T}_{X_{reg}}\cong\mathcal{L}_1\oplus\dots\oplus\mathcal{L}_n$, and we have assumed $K_X$ to be ample, each line bundle $\mathcal{L}_i$ appearing in the direct sum decomposition of $\mathcal{T}_{X_{reg}}$ has negative degree i.e., $c_1(\mathcal{L}_i)\cdot[K_X]^{n-1}<0$ for all $1\le i\le n$. Moreover, since $\mathcal{T}_X$ is semistable with respect to $K_X$, all the line bundles $\mathcal{L}_i$ have the same slope.\\
Let $G_0=SL(2,\mathbb{R})^n$, then $G_0$ is a connected Hodge group corresponding to the polydisk $\mathbb{H}^n$, which is a Hermitian symmetric space of noncompact type. Let $G=SL(2,\mathbb{C})^n$, a complexification of $G_0$, and let $\mathfrak{g}=\mathfrak{sl}(2,\mathbb{C})^n$, the Lie algebra of $G$. Since $G_0$ is a Hodge group of \emph{Hermitian type}, $\mathfrak{g}$ has the following Hodge decomposition
\begin{align*}
\mathfrak{g}=\mathfrak{g}^{-1,1}\oplus\mathfrak{g}^{0,0}\oplus\mathfrak{g}^{1,-1},
\end{align*}
with $[\mathfrak{g}^{p,-p},\mathfrak{g}^{q,-q}]\subset\mathfrak{g}^{p+q,-p-q}$ for $p,q\in\{-1,0,1\}$, where $[\cdot,\cdot]$ denotes the Lie bracket of $\mathfrak{g}$. An element of $\mathfrak{g}$ can be written as an $n$-tuple of trace zero $2\times2$ matrices with entries in $\mathbb{C}$. Note that $\mathbb{H}^n\cong G_0/K_0$, where $K_0=U(1)^n$ is a maximal compact subgroup of $G_0$. Let $K$ denote the complexification of $K_0$, then we have $K\cong(\mathbb{C}^*)^n$.\\
Since $\mathcal{T}_{X_{reg}}$ splits as a direct sum of line bundles, it admits a reduction in structure group from $GL(n,\mathbb{C})$ to $K=(\mathbb{C}^*)^n$. Let $P$ denote the principle $K=(\mathbb{C}^*)^n$-bundle over $X_{reg}$ associated to $\mathcal{T}_{X_{reg}}$, i.e., $P$ has the same locally trivializing cover and the same transition functions as $\mathcal{T}_{X_{reg}}$. Then since $\mathbb{C}^{\oplus n}$ and $\mathfrak{g}^{-1,1}$ are isomorphic as $K$-representations, it follows from the associated bundle construction that there is an isomorphism of holomorphic vector bundles
\begin{align}\label{map}
    \theta:\mathcal{T}_{X_{reg}}\longrightarrow P\times_K\mathfrak{g}^{-1,1}
\end{align}
such that $[\theta(u),\theta(v)]=0$ for all local sections $u,v$ of $\mathcal{T}_{X_{reg}}$. Thus $(P,\theta)$ is a uniformizing system of Hodge bundles on $X_{reg}$. Since $\mathfrak{g}$ is a polarized Hodge representation of $G$, $E=P\times_K\mathfrak{g}$ is a system of Hodge bundles on $X_{reg}$, so in particular $E$ is a Higgs bundle on $X_{reg}$. It has a Hodge decomposition $E=E^{-1,1}\oplus E^{0,0}\oplus E^{1,-1}$ induced by the Hodge decomposition of $\mathfrak{g}$, i.e., $E^{i,-i}\cong P\times_K\mathfrak{g}^{i,-1}$ for $i\in\{-1,0,1\}$.\\ 
The isomorphism $\theta$ induces a map from $\mathcal{T}_{X_{reg}}$ to the endomorphism bundle $\mathcal{E}nd(E)$. This map is given locally by sending an element $a\in\mathcal{T}_{X_{reg},x}\cong\mathfrak{g}^{-1,1}$ to the endomorphism $b\mapsto[\theta(a),b]$ in $\mathcal{E}nd(E)_x$. Hence $\theta$ corresponds to an element in $H^0(X_{reg},\Omega^1_{X_{reg}}\otimes \mathcal{E}nd(E))$ via the isomorphism $\mathcal{H}om(\mathcal{T}_{X_{reg}},\mathcal{E}nd(E))\cong\Omega^1_{X_{reg}}\otimes \mathcal{E}nd(E)$. Since $\Omega^1_{X_{reg}}\otimes \mathcal{E}nd(E)\cong\Omega^1_{X_{reg}}\otimes E\otimes E^*\cong \mathcal{H}om(E,E\otimes\Omega^1_{X_{reg}})$, we see that $\theta$ eventually corresponds to a morphism $\widehat{\theta}:E\to E\otimes\Omega^1_{X_{reg}}$. The map $\widehat{\theta}$ is given locally by sending an element $a\in E_x\cong\mathfrak{g}$ to the map $b\mapsto[\theta(b),a]\in \mathcal{H}om(\mathcal{T}_{X_{reg},x},E_x)\cong\Omega^1_{X_{reg},x}\otimes E_x$. Thus on the direct summands $E^{p,q}$ appearing in the Hodge decomposition of $E$, we have $\widehat{\theta}:E^{p,q}\to E^{p-1,q+1}\otimes\Omega^1_{X_{reg}}$. Moreover, it is straightforward to see that $\widehat{\theta}\wedge\widehat{\theta}=0$, where $\widehat{\theta}\wedge\widehat{\theta}$ denotes the composition
\begin{align*}
\widehat{\theta}\wedge\widehat{\theta}:E\xrightarrow{\widehat{\theta}}E\otimes\Omega^1_{X_{reg}}\xrightarrow{\widehat{\theta}\otimes Id}E\otimes\Omega^1_{X_{reg}}\otimes\Omega^1_{X_{reg}}\xrightarrow{Id\otimes[\wedge]}E\otimes\Omega^2_{X_{reg}}.
\end{align*}
Thus $\widehat{\theta}$ is in fact a Higgs field on $E$. We can decompose $\mathfrak{g}$ as as direct sum of simple ideals $\mathfrak{g}\cong\bigoplus_i\mathfrak{g}_i$, where $\mathfrak{g}_i\cong\mathfrak{sl}(2,\mathbb{C})$ for all $1\le i\le n$. Since $G=SL(2,\mathbb{C})^n$ is connected, we have a decomposition $E=P\times_K\mathfrak{g}=\bigoplus_iP\times_K\mathfrak{g}_i$, where $E_i=P\times_K\mathfrak{g}_i\cong\mathcal{L}_i\oplus\mathcal{L}_i^*\oplus\mathcal{O}_{X_{reg}}$ for all $1\le i\le n$, and each $E_i$ is a subsystem of Hodge sheaves of $E$.\\
We want to show that $E$ is $K_X$-polystable as a Higgs bundle on $X_{reg}$, and by Lemma \ref{lemsta} it is sufficient to show that $E$ is $K_X$-polystable as a system of Hodge bundles. It follows from \cite[Corollary 9.3]{simp} that if $W\subset E$ is a subsystem of Hodge sheaves then $\textrm{rank}(W^{-1,1})\ge\textrm{rank}(W^{1,-1})$. We want to show that $E_i$ is a $K_X$-stable as a system of Hodge bundles, and hence as a Higgs bundle on $X_{reg}$, for all $1\le i\le n$.\\
Let $\mathcal{F}\subset E_i$ be a subsystem of Hodge sheaves with $0<\textrm{rank}(\mathcal{F})\le\textrm{rank}(E_i)$. For each local section $a\in\mathcal{F}$, we can identify the image $\widehat{\theta}(a)$ with a local section $(b\mapsto[\theta(b),a])\in \mathcal{H}om(\mathcal{T}_{X_{reg}},E_i)$, with $b\in\mathcal{T}_{X_{reg}}\cong P\times_K\mathfrak{g}^{-1,1}$. For each non-zero local section $b\in\mathcal{T}_{X_{reg}}$, $\theta(b)$ can be expressed as an $n$-tuple of trace zero upper-diagonal $2\times2$ matrices of the form 
$$
\left(\begin{bmatrix}0&\beta_j\\0&0\end{bmatrix}\right)_j,
$$
$\beta_j\in\mathbb{C}$ for all $1\le j\le n$. If $\mathcal{F}$ is a subsystem of $E_i$ of rank one, then $\mathcal{F}$ is a subsheaf of one of the direct summands of $E_i$. If $\mathcal{F}\subset\mathcal{L}_i$, then $c_1(\mathcal{F})\cdot[K_X]^{n-1}<0$ because $c_1(\mathcal{L}_i)\cdot[K_X]^{n-1}<0$ and $\mathcal{L}_i$ is semistable. If $\mathcal{F}\subset\mathcal{L}^*_i$, then any non-zero local section $a\in\mathcal{F}$ can be represented by an $n$-tuple of the form 
$$
\left(0,\dots,0,\begin{bmatrix}0&0\\ \alpha_i&0\end{bmatrix},0,\dots,0\right),
$$
with $\alpha_i\in\mathbb{C}^*$. The bracket $[\theta(b),a]$ gives 
$$
\left(0,\dots,0,\begin{bmatrix}\alpha_i\beta_i&0\\0&-\alpha_i\beta_i\end{bmatrix},0,\dots,0\right).
$$
Thus the image $\widehat{\theta}(a)$ is a non-zero local section in $\mathcal{O}_{X_{reg}}\otimes\Omega^1_{X_{reg}}$, for all non-zero $a\in\mathcal{F}$, which implies that $\widehat{\theta}(\mathcal{F})$ is not contained in $\mathcal{F}\otimes\Omega^1_{X_{reg}}$. It follows that $\mathcal{F}$ is not $\widehat{\theta}$-invariant, hence not a Higgs subsheaf of $E_i$. Similarly, we see that if $\mathcal{F}\subset\mathcal{O}_{X_{reg}}$, then $\mathcal{F}$ is not $\widehat{\theta}$-invariant and hence not a Higgs subsheaf of $E_i$. If $\mathcal{F}$ is a rank two subsystem of $E_i$ then $\mathcal{F}$ is a subsheaf of either $\mathcal{L}_i\oplus\mathcal{L}^*_i$, or $\mathcal{L}_i\oplus\mathcal{O}_{X_{reg}}$, or $\mathcal{L}^*_i\oplus\mathcal{O}_{X_{reg}}$. If $\mathcal{F}\subset\mathcal{L}_i\oplus\mathcal{O}_{X_{reg}}$, then it is straightforward to see that $c_1(\mathcal{F})\cdot[K_{X_{reg}}]^{n-1}<0$. If $\mathcal{F}\subset\mathcal{L}^*_i\oplus\mathcal{O}_{X_{reg}}$, then any non-zero local section $a\in\mathcal{F}$ can be expressed as a $n$-tuple of the form 
$$
\left(0,\dots,0,\begin{bmatrix}\gamma_i&0\\\alpha_i&-\gamma_i\end{bmatrix},0,\dots,0\right)
$$
with $\alpha_i,\gamma_i\in\mathbb{C}^*$. For a non-zero local section $b\in\mathcal{T}_{X_{reg}}$ as earlier, the bracket $[\theta(b),a]$ is then 
$$
\left(0,\dots,0,\begin{bmatrix}\alpha_i\beta_i&-2\gamma_i\beta_i\\0&-\alpha_i\beta_i\end{bmatrix},0,\dots,0\right).
$$
Thus the image $\widehat{\theta}(a)$ is a non-zero local section of $(\mathcal{L}_i\oplus\mathcal{O}_{X_{reg}})\otimes\Omega^1_{X_{reg}}$, i.e., the image $\widehat{\theta}(\mathcal{F})$ is not contained in $\mathcal{F}\otimes\Omega^1_{X_{reg}}$. It follows that $\mathcal{F}$ is not $\widehat{\theta}$-invariant and hence not a Higgs subsheaf of $E_i$. It can similarly be verified that if $\mathcal{F}\subset\mathcal{L}_i\oplus\mathcal{L}^*_i$, then $\mathcal{F}$ is not $\widehat{\theta}$-invariant and hence not a Higgs subsheaf of $E_i$. If $\mathcal{F}=\mathcal{F}^{-1,1}\oplus\mathcal{F}^{0,0}\oplus\mathcal{F}^{1,-1}$ is a subsystem of Hodge sheaves of $E_i$ of rank 3, then we know from \cite[Corollary 9.3]{simp} that $\textrm{rank}(\mathcal{F}^{-1,1})\ge\textrm{rank}(\mathcal{F})^{1,-1}$. If the strict inequality holds, then $c_1(\mathcal{F})\cdot[K_X]^{n-1}<0$, and if equality holds then $\mathcal{F}=E_i$. These are all the cases, hence we conclude that any proper subsystem of Hodge sheaves of $E_i$ has slope strictly less than zero, which implies that $E_i$ is $K_X$-stable as a Higgs bundle for all $1\le i\le n$. Since $E$ is a direct sum of all the $E_i$'s, it follows that $E$ is $K_X$-polystable as a Higgs bundle on $X_{reg}$.\\
Note that $E\cong\mathcal{T}_{X_{reg}}\oplus\Omega^1_{X_{reg}}\oplus\mathcal{O}^{\oplus n}_{X_{reg}}$, and let $\mathcal{F}_X$ be the unique extension of $E$ as a reflexive sheaf on $X$. Then we have $\mathcal{F}_X\cong\mathcal{T}_X\oplus\Omega^{[1]}_X\oplus\mathcal{O}^{\oplus n}_X$, and by construction it follows that $c_1(\mathcal{F})=0$. Note that $\mathcal{T}_X=\bigoplus_{i}j_*\mathcal{L}_i$, where $j:X_{reg}\to X$ denotes the natural inclusion. Therefore $\widehat{c}_2(\mathcal{F}_X)=\sum_i\widehat{c}_1(j_*\mathcal{L}_i)^2$. It follows from Lemma \ref{keylem} that $\widehat{c}_1(j_*\mathcal{L}_i)^2=0$ for all $1\le i\le n$ and hence $\widehat{c}_2(\mathcal{F}_X)=0$. In particular, we have $\widehat{ch}_2(\mathcal{F}_X)\cdot[K_X]^{n-2}=\widehat{c}_2(\mathcal{F}_X)\cdot[K_X]^{n-2}=0$. Following the notation of \cite{gkpt2}, \cite[Theorem 5.1]{gkpt2} thus implies that $(E,\widehat{\theta})\in\textrm{TPI-Higgs}_{X_{reg}}$.\\ Then \cite[Proposition 3.17]{gkpt2} together with the assumption that $X$ is maximally quasi-\'{e}tale implies that $\mathcal{F}_X$ is locally free. Since $\mathcal{T}_X$ is a direct summand of $\mathcal{F}_X=\mathcal{T}_X\oplus\Omega^1_X\oplus\mathcal{O}_X^{\oplus n}$, it follows that $\mathcal{T}_X$ must be locally free. The solution of the Lipman-Zariski conjecture for klt spaces (\cite[Theorem 16.1]{gkkp}) thus asserts that $X$ is smooth, and we are done.
\end{proof}

\textbf{Proof of Theorem \ref{thm1}}\\
Suppose $X$ is an $n$-dimensional projective, klt variety of general type which satisfies the assumptions of Theorem \ref{thm1}. Recall from Proposition \ref{prop1} that the canonical model $X_{can}$ of $X$ is projective, klt, and the tangent sheaf of its regular locus splits as a direct sum of line bundles of negative degree. Moreover, the canonical divisor $K_{X_{can}}$ of $X_{can}$ is ample. Let $\gamma:Y\to X_{can}$ be a Galois, maximally quasi-\'{e}tale cover. Note that the existence of such a cover has been established in \cite[Theorem 1.14]{gkp1}. Since $\gamma$ branches only over the singular set of $X_{can}$, it follows that $Y$ is again klt, and since $\gamma$ is finite, the canonical divisor $K_Y=\gamma^*K_{X_{can}}$ is again ample. Moreover, by Proposition \ref{prop} and the semistability of $\mathcal{T}_Y$ with respect to $K_Y$ (Proposition \ref{tanprop}), we have that $\mathcal{T}_{Y_{reg}}$ splits as a direct sum of line bundles of negative degree. Thus $Y$ satisfies the assumptions of Proposition \ref{prop2} and we conclude that $Y$ is smooth. It follows that $Y$ also satisfies the assumptions of \cite[Corollary 9.7]{simp}, hence $Y$ is uniformized by the polydisk $\mathbb{H}^n$. \qed  

\begin{remark}\label{rem}
While the splitting of the tangent bundle $\mathcal{T}_{X_{reg}}$ of the smooth locus $X_{reg}$ of $X$ as a direct sum of line bundles is a sufficient condition for (the canonical model of) $X$ to be uniformized by the polydisk, it is not a necessary condition. The reason, as pointed out by Catanese and Di Scala in \cite{catdi}, is that the group $Aut(\mathbb{H}^n)$ of holomorphic automorphisms of the polydisk is not $PSL(2,\mathbb{R})^n$, but is the semidirect product of $PSL(2,\mathbb{R})^n$ and the symmetric group $\sigma_n$. Thus there is a split sequence
\begin{align*}
    1\to Aut(\mathbb{H})^n\hookrightarrow Aut(\mathbb{H}^n)\to\sigma_n\to1
\end{align*}
where $Aut(\mathbb{H})^n=Aut^0(\mathbb{H}^n)=PSL(2,\mathbb{R})^n$. The group $Aut(\mathbb{H}^n)$ is a disconnected Hodge group of Hermitian type. The splitting of the tangent bundle $\mathcal{T}_{\mathbb{H}^n}\cong\bigoplus_{i=1}^n\mathcal{T}_{\mathbb{H}}$ descends to a quotient $X\cong\mathbb{H}^n/\Gamma$, where $\Gamma\subset Aut(\mathbb{H}^n)$ acts discretely and cocompactly on $\mathbb{H}^n$, only if $\Gamma$ acts diagonally on $\mathbb{H}^n$, i.e., only if $\Gamma\subset Aut(\mathbb{H})^n=PSL(2,\mathbb{R})^n$. However, since any subgroup $\Gamma\subset Aut(\mathbb{H}^n)$ can be expressed as an extension of a subgroup $\Gamma'$ of $PSL(2,\mathbb{R})^n$ by a subgroup $H$ of $\sigma_n$ because of the above exact sequence, every projective, klt quotient $X$ of a polydisk admits a Galois, quasi-\'{e}tale cover $\gamma:Y\to X$ such that $Y$ is also a polydisk quotient whose tangent sheaf $\mathcal{T}_Y$ splits as a direct sum of reflexive sheaves of rank one.
\end{remark}

\subsection{Necessary conditions}

Next we would like to formulate a necessary condition for a projective klt variety of general type to be uniformized by the polydisk. This will be a more precise version of Remark \ref{rem}.

\begin{proposition}
Let $X$ be a $n$-dimensional projective klt variety with $K_X$ is ample, and such that $X\cong\mathbb{H}^n/\widehat{\Gamma}$, where $\widehat{\Gamma}\subset PSL(2,\mathbb{R})^n\rtimes\sigma_n=\textrm{Aut}(\mathbb{H}^n)$ is a discrete cocompact subgroup acting fixed point freely in codimension one. Then $X$ admits a smooth quasi-\'{e}tale cover $\gamma:Y\to X$ such that $K_Y$ is ample, and the tangent bundle $\mathcal{T}_Y$ splits as a direct sum of line bundles.
\end{proposition}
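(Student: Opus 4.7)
The strategy is to make explicit, at the level of $\widehat{\Gamma}$, the splitting of the extension
\[
1 \to PSL(2,\mathbb{R})^n \to \textrm{Aut}(\mathbb{H}^n) \to \sigma_n \to 1
\]
recalled in Remark \ref{rem}, and then invoke Selberg's lemma to arrange torsion-freeness. First I would observe that $\widehat{\Gamma}$, being a discrete subgroup of the linear Lie group $\textrm{Aut}(\mathbb{H}^n)$, is finitely generated and linear; hence Selberg's lemma produces a torsion-free normal subgroup $\widehat{\Gamma}_0 \trianglelefteq \widehat{\Gamma}$ of finite index. Intersecting with the identity component gives $\Gamma' := \widehat{\Gamma}_0 \cap PSL(2,\mathbb{R})^n$; since $\widehat{\Gamma}_0 \trianglelefteq \widehat{\Gamma}$ and $PSL(2,\mathbb{R})^n \trianglelefteq \textrm{Aut}(\mathbb{H}^n)$, the subgroup $\Gamma'$ is normal in $\widehat{\Gamma}$. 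By construction it is torsion-free, contained in $PSL(2,\mathbb{R})^n$, and of finite index in $\widehat{\Gamma}$, hence discrete and cocompact in $PSL(2,\mathbb{R})^n$.

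Set $Y := \mathbb{H}^n/\Gamma'$. Torsion-freeness makes the $\Gamma'$-action on $\mathbb{H}^n$ free, so $Y$ is a compact complex manifold; because $Y$ admits a finite morphism to the projective variety $X = \mathbb{H}^n/\widehat{\Gamma}$, it is itself projective. The induced map $\gamma : Y \to X$ is finite and Galois with group $H := \widehat{\Gamma}/\Gamma'$. To check $\gamma$ is quasi-\'etale, note that a non-trivial coset $g\Gamma' \in H$ fixes $\Gamma' \cdot x \in Y$ if and only if $\gamma_0^{-1} g$ fixes $x \in \mathbb{H}^n$ for some $\gamma_0 \in \Gamma'$; but $\gamma_0^{-1} g$ lies in $\widehat{\Gamma}\setminus\Gamma'$, so it is non-trivial in $\widehat{\Gamma}$, and by hypothesis its fixed locus has codimension $\geq 2$. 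Hence $H$ acts on $Y$ fixed-point freely in codimension one, $\gamma$ is \'etale in codimension one, and therefore $K_Y = \gamma^* K_X$ remains ample.

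It remains to produce the splitting of $\mathcal{T}_Y$. The canonical decomposition
\[
\mathcal{T}_{\mathbb{H}^n} \cong \bigoplus_{i=1}^n \textrm{pr}_i^* \mathcal{T}_{\mathbb{H}}
\]
is preserved by the componentwise action of $PSL(2,\mathbb{R})^n$ and hence by $\Gamma'$, so it descends to a direct sum decomposition $\mathcal{T}_Y \cong \bigoplus_{i=1}^n \mathcal{L}_i$ into line bundles. The only genuinely non-formal ingredient is Selberg's lemma (together with the automatic linearity of $\textrm{Aut}(\mathbb{H}^n)$); the remainder is bookkeeping about normal subgroups and fixed loci, and I do not anticipate any serious obstacle.
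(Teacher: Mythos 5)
Your proof is correct and follows essentially the same strategy as the paper: exploit the normality of $PSL(2,\mathbb{R})^n$ in $\mathrm{Aut}(\mathbb{H}^n)$, use Selberg's lemma to reach a torsion-free lattice inside $PSL(2,\mathbb{R})^n$, and descend the product splitting of $\mathcal{T}_{\mathbb{H}^n}$ to the free quotient. The organization differs slightly: you apply Selberg once to $\widehat{\Gamma}$ and then intersect the resulting torsion-free normal subgroup with the identity component, producing in a single step a torsion-free, normal, finite-index $\Gamma'\subset PSL(2,\mathbb{R})^n$ and hence a Galois quasi-\'etale cover $Y=\mathbb{H}^n/\Gamma'\to X$; the paper instead first splits off the $\sigma_n$-part to form the intermediate klt quotient $Y'=\mathbb{H}^n/\Gamma'$ and only then applies Selberg to $\Gamma'$ (justifying its finite generation via $\pi_1(Y'_{reg})$ for the quasi-projective smooth locus), composing two covers. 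Your variant is slightly cleaner, and also avoids having to argue that a composite of two Galois quasi-\'etale covers is again Galois, which is not needed for the statement anyway. One small imprecision: a discrete subgroup of a linear Lie group need not be finitely generated; here finite generation of $\widehat{\Gamma}$ follows from cocompactness (or, as in the paper, from $\widehat{\Gamma}\cong\pi_1(X_{reg})$ with $X_{reg}$ quasi-projective), so you should cite that rather than discreteness alone. The remaining verifications (freeness of the $\Gamma'$-action from torsion-freeness plus proper discontinuity, projectivity and ampleness of $K_Y=\gamma^*K_X$ via finiteness of $\gamma$, and the codimension-two fixed-locus computation for $H=\widehat{\Gamma}/\Gamma'$) are all sound.
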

\begin{proof}
Due to the structure of the automorphism group $Aut(\mathbb{H}^n)=PSL(2,\mathbb{R})^n\rtimes\sigma_n$, we know that $\widehat{\Gamma}$ can be expressed as an extension
\begin{align*}
    1\to\Gamma'\to\widehat{\Gamma}\to H\to1,
\end{align*}
where $\Gamma'\subset PSL(2,\mathbb{R})^n$ is normal of finite index in $\widehat{\Gamma}$, and $H\subset\sigma_n$ is finite. The quotient map $\pi:\mathbb{H}^n\to X$ thus factors as
\begin{align*}
    \mathbb{H}^n\to\mathbb{H}^n/\Gamma'\to\mathbb{H}^n/\widehat{\Gamma}=X,
\end{align*}
where $Y'=\mathbb{H}^n/\Gamma'$ is also klt and projective, and the map $Y'\to X$ is Galois and quasi-\'{e}tale. Following the argument in Step I of the proof of Theorem \ref{genthm}, the fundamental group of the smooth locus $Y_{reg}$ is finitely generated and isomorphic to $\Gamma'$. Then using Selberg's lemma we conclude that $\Gamma'$ has a torsion free normal subgroup $\Gamma$ of finite index. The quotient map $\pi':\mathbb{H}^n\to Y'$ thus factors as
\begin{align*}
    \mathbb{H}^n\to\mathbb{H}^n/\Gamma\to\mathbb{H}^n/\Gamma'=Y'.
\end{align*}
Since $\Gamma$ acts freely and cocompactly on $\mathbb{H}^n$, the quotient $\mathbb{H}^n/\Gamma$ is a smooth projective variety. The map $Y\to Y'$ is Galois and quasi-\'{e}tale, and after composing with the map $Y'\to X$, we get a Galois, quasi-\'{e}tale map $Y\to X$. Recall that the tangent bundle of $\mathbb{H}^n$ can be expressed as $\mathcal{T}_{\mathbb{H}^n}\cong P\times_K\mathfrak{g}^{-1,1}$, where $K=(\mathbb{C}^*)^n$ is maximal compact inside $SL(2,\mathbb{C})^n$, and $\mathfrak{g}=\mathfrak{sl}(2,\mathbb{C})^n$. The action of $\Gamma$ on $\mathbb{H}^n$ lifts to a left action of $\Gamma$ on $\mathcal{T}_{\mathbb{H}^n}$ via pushforward of tangent spaces, so it follows that $\mathcal{T}_Y\cong P'\times_K\mathfrak{g}^{-1,1}$, where $P'\cong P/\Gamma$ as principal $K$-bundles on $Y$. Since $\mathcal{T}_Y$ also has structure group $K=(\mathbb{C}^*)^n$, it splits as a direct sum of line bundles. Since the map $\gamma:Y\to X$ is finite, and $K_X$ is ample by assumption, it follows that $K_Y=\gamma^*K_X$ is also ample. The semistability of $\mathcal{T}_Y$ with respect to $K_Y$ then implies that the line bundles appearing in the direct sum decomposition of $\mathcal{T}_Y$ all have negative degree.
\end{proof}

\begin{remark}
If $X$ is a smooth projective quotient of the polydisk, then we \emph{do not} in general have a representation $\pi_1(X)\to PSL(2,\mathbb{R})^n$. Therefore the action of $\pi_1(X)$ on $\mathbb{H}^n$ is not in general compatible with the reduction in structure group of $\mathcal{T}_{\mathbb{H}^n}$ from $GL(n,\mathbb{C})$ to $(\mathbb{C}^*)^n=K$. Hence the splitting of the tangent bundle into a direct sum of line bundles does not in general descend to the quotient variety $X$.\\
There is however always a representation $\pi_1(X)\to PSL(2,\mathbb{R})^n\rtimes\sigma_n$, and the action of $\pi_1(X)$ on $\mathbb{H}^n$ is compatible with the reduction in structure group of $\mathcal{T}_{\mathbb{H}^n}$ from $GL(n,\mathbb{C})$ to $(\mathbb{C}^*)^n\rtimes\sigma_n=K'$. The tangent bundle of $X$ thus always admits a reduction in structure group from $GL(n,\mathbb{C})$ to $K'$ i.e., we can write $\mathcal{T}_{X}\cong P\times_{K'}\mathfrak{g}^{-1,1}$, for some principal $K'=(\mathbb{C}^*)^n\rtimes\sigma_n$-bundle $P$ on $X$. 
\end{remark}

\section{Uniformization by Hermitian symmetric space of type \texorpdfstring{$CI$}{}}

For any $n\in\mathbb{Z}_{\ge1}$, the Siegel upper half space $\mathcal{H}_n$ is a Hermitian symmetric space of non-compact type, of dimension $n(n+1)/2$. The bounded symmetric domain realization of $CI$ is as follows (see \cite[Ch.4, Sec.2]{mok}).
\begin{align*}
    CI=\{Z\in\mathcal{D}_{n,n}:Z^T=Z\},
\end{align*}
where the domain $\mathcal{D}_{n,n}$ is given by
\begin{align}\label{mat}
    \mathcal{D}_{n,n}=\{Z\in M(n,n,\mathbb{C})\cong\mathbb{C}^{n^2}:I_n-\Bar{Z}^TZ>0\}.
\end{align}
In the above definition, the condition in the bracket is that the matrix $I_n-\Bar{Z}^TZ$ is positive definite. We know, for example from \cite[p.108]{take}, that the automorphism group of $\mathcal{H}_n$ is $PSp(2n,\mathbb{R})$, which is a Hodge group of Hermitian type. Another Hodge group associated to $\mathcal{H}_n$ is $G_0=Sp(2n,\mathbb{R})$, which is a cover of $PSp(2n,\mathbb{R})$ and has maximal compact subgroup $K_0=U(n)$. Thus we have $\mathcal{H}_n\cong G_0/K_0=Sp(2n,\mathbb{R})/U(n)$. The complexified Lie algebra $\mathfrak{g}$ of $G_0$ has complex dimension $n(2n+1)$, and it follows from the list on \cite[p.341]{helga}, that $\mathfrak{g}$ can be expressed as the set of complex $2n\times2n$ trace zero matrices $\begin{bmatrix}A&B\\C&-A^T\end{bmatrix}$, where $A$ is a complex $n\times n$ matrix, and $B$ and $C$ are complex symmetric $n\times n$ matrices. By definition, the Lie algebra $\mathfrak{g}$ admits a Hodge decomposition given by $\mathfrak{g}=\mathfrak{g}^{-1,1}\oplus\mathfrak{g}^{0,0}\oplus\mathfrak{g}^{1,-1}$, where $\mathfrak{g}^{-1,1}$ consists of matrices of the form $\begin{bmatrix}0&B\\0&0\end{bmatrix}$, $\mathfrak{g}^{0,0}$ consists of matrices of the form $\begin{bmatrix}A&0\\0&-A^T\end{bmatrix}$, and $\mathfrak{g}^{1,-1}$ consists of matrices of the form $\begin{bmatrix}0&0\\C&0\end{bmatrix}$. The Lie algebras $\mathfrak{g}^{-1,1}$, $\mathfrak{g}^{0,0}$, and $\mathfrak{g}^{1,-1}$ have complex dimensions $n(n+1)/2$, $n^2$, and $n(n+1)/2$ respectively.\\
The compact dual of $\mathcal{H}_n$ is the Lagrangian Grassmannian $Y_n$, which is a homogeneous complex projective variety of dimension $n(n+1)/2$ for the action of $G=Sp(2n,\mathbb{C})$. It parametrizes all Lagrangian (i.e. maximally isotropic) subspaces of a complex symplectic vector space of dimension $2n$. We know from \cite{vdg} that there is an open embedding $j:\mathcal{H}_n\hookrightarrow Y_n$, and the tangent bundle of $Y_n$ is given by $\mathcal{T}_{Y_n}\cong \textrm{Sym}^2(\mathcal{E})$, where $\mathcal{E}$ is the tautological vector bundle on $Y_n$, and corresponds to the standard representation of $K_0=U(n)$. It follows that the tangent bundle of $\mathcal{H}_n$ can be expressed as $\mathcal{T}_{\mathcal{H}_n}\cong\textrm{Sym}^2(\mathcal{E}')$, where $\mathcal{E}'$ denotes the restriction of $\mathcal{E}$ to $\mathcal{H}_n$.  \\
\\
We want to formulate necessary and sufficient conditions for a complex projective klt variety of dimension $n(n+1)/2$ with ample canonical divisor to be uniformized by the Siegel upper half space $\mathcal{H}_n$. To do this, we apply Theorem \ref{genthm}.

\subsection{Sufficient conditions}

\begin{proposition}\label{prop6}
Let $X$ be a projective, klt variety of dimension $n(n+1)/2$ for some $n\in\mathbb{Z}_{\ge1}$ with $K_X$ ample. Suppose that the tangent bundle of the regular locus $X_{reg}$ of $X$ satisfies $\mathcal{T}_{X_{reg}}\cong\textrm{Sym}^2(\mathcal{E})$, where $\mathcal{E}$ is a vector bundle of rank $n$ on $X_{reg}$. Let $\mathcal{E}'$ denote the reflexive extension of $\mathcal{E}$ to $X$, and suppose that the Chern class equality 
\begin{align}\label{cceq}
    [2\widehat{c}_2(X)-\widehat{c}_1(X)^2+2n\widehat{c}_2(\mathcal{E}')-(n-1)\widehat{c}_1(\mathcal{E}')^2]\cdot[K_X]^{n-2}=0
\end{align}
holds on $X$. Then $X\cong\mathcal{H}_n/\Gamma$, where $\Gamma\subset PSp(2n,\mathbb{R})$ is a discrete, cocompact subgroup, and acts fixed point freely in codimension one on the Siegel upper half space $\mathcal{H}_n$.
\end{proposition}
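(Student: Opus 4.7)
The plan is to verify the two conditions of Theorem~\ref{genthm} with $\mathcal{D}=\mathcal{H}_n$ and Hodge group $G_0=Sp(2n,\mathbb{R})$. Since $\textrm{Aut}(\mathcal{H}_n)=PSp(2n,\mathbb{R})$ is the quotient of $G_0$ by the discrete central subgroup $\{\pm I\}$, this choice of $G_0$ is admissible in the sense of Remark~\ref{remgp}(b). Recall from the beginning of this section that the complexified compact subgroup is $K=GL(n,\mathbb{C})$ and that $\mathfrak{g}^{-1,1}$ is the space of complex symmetric $n\times n$ matrices on which $K$ acts by the adjoint action $B\mapsto ABA^{T}$.

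To establish condition~(1), let $P$ denote the frame bundle of $\mathcal{E}$, a principal $GL(n,\mathbb{C})$-bundle on $X_{reg}$. The $K$-representation $\mathfrak{g}^{-1,1}$ described above is precisely the symmetric square of the standard representation, so there is a canonical isomorphism $P\times_{K}\mathfrak{g}^{-1,1}\cong\textrm{Sym}^{2}(\mathcal{E})$. The hypothesized isomorphism $\mathcal{T}_{X_{reg}}\cong\textrm{Sym}^{2}(\mathcal{E})$ therefore furnishes an isomorphism $\theta:\mathcal{T}_{X_{reg}}\to P\times_{K}\mathfrak{g}^{-1,1}$. The integrability condition $[\theta(u),\theta(v)]=0$ is automatic since $\mathfrak{g}^{-1,1}$ is abelian: $[\mathfrak{g}^{-1,1},\mathfrak{g}^{-1,1}]\subset\mathfrak{g}^{-2,2}=0$, as $G_0$ is of Hermitian type. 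This yields a uniformizing system of Hodge bundles $(P,\theta)$ on $X_{reg}$ for $G_0$.

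For condition~(2), analogous adjoint calculations identify $P\times_{K}\mathfrak{g}^{0,0}\cong\mathcal{E}nd(\mathcal{E})$ and $P\times_{K}\mathfrak{g}^{1,-1}\cong\textrm{Sym}^{2}(\mathcal{E}^{\vee})$, so the system of Hodge bundles decomposes on $X_{reg}$ as
\begin{align*}
    P\times_{K}\mathfrak{g}\;\cong\;\textrm{Sym}^{2}(\mathcal{E})\oplus\mathcal{E}nd(\mathcal{E})\oplus\textrm{Sym}^{2}(\mathcal{E}^{\vee}).
\end{align*}
Let $\mathcal{F}$ denote its reflexive extension to $X$. Standard Chern-root calculations give
\begin{align*}
    ch_{2}(\textrm{Sym}^{2}(\mathcal{E}))&=\tfrac{n+3}{2}c_{1}(\mathcal{E})^{2}-(n+2)c_{2}(\mathcal{E}),\\
    ch_{2}(\mathcal{E}nd(\mathcal{E}))&=(n-1)c_{1}(\mathcal{E})^{2}-2nc_{2}(\mathcal{E}),
\end{align*}
together with $ch_{2}(\textrm{Sym}^{2}(\mathcal{E}^{\vee}))=ch_{2}(\textrm{Sym}^{2}(\mathcal{E}))$. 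Summing these three contributions and passing to the reflexive extension on $X$ yields $\widehat{ch}_{2}(\mathcal{F})=2(n+1)\bigl[\widehat{c}_{1}(\mathcal{E}')^{2}-2\widehat{c}_{2}(\mathcal{E}')\bigr]$. Using in addition the identities $c_{1}(\mathcal{T}_{X_{reg}})=(n+1)c_{1}(\mathcal{E})$ and $c_{2}(\mathcal{T}_{X_{reg}})=\tfrac{(n-1)(n+2)}{2}c_{1}(\mathcal{E})^{2}+(n+2)c_{2}(\mathcal{E})$, which extend to identities among the $\mathbb{Q}$-Chern classes of $X$ and $\mathcal{E}'$, a direct algebraic manipulation shows that the Chern class equality stated in the proposition equals, up to the nonzero scalar $-2(n+1)$, the quantity $\widehat{ch}_{2}(\mathcal{F})\cdot[K_{X}]^{n-2}$. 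Hence both conditions of Theorem~\ref{genthm} are satisfied, and the conclusion $X\cong\mathcal{H}_{n}/\Gamma$ follows.

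The main obstacle will be the Chern class bookkeeping of the last paragraph: one must carefully relate the $\mathbb{Q}$-Chern classes of the reflexive extensions $\mathcal{F}$ and $\mathcal{E}'$ to those of $\mathcal{E}$ on $X_{reg}$, and verify that the Chern-root identities derived on the smooth locus persist after intersection with $[K_{X}]^{n-2}$. Since $X$ is klt and $K_{X}$ is ample, one can, as in the proof of Lemma~\ref{keylem}, choose a complete intersection surface from $|m K_{X}|$ with $m\gg 0$ that meets only the quotient-singular locus and is therefore computable from data on the big open subset $X_{reg}$, which legitimizes carrying the Chern-root computation through to the claimed equality of numerical invariants on $X$.
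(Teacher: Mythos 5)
Your proposal is correct and follows essentially the same route as the paper: take the frame bundle $P$ of $\mathcal{E}$, identify $\mathfrak{g}^{-1,1}\cong\mathrm{Sym}^2(V)$ as $K=GL(n,\mathbb{C})$-representations to get the uniformizing system of Hodge bundles (integrability being automatic since $\mathfrak{g}^{-2,2}=0$), decompose $P\times_K\mathfrak{g}\cong\mathrm{Sym}^2(\mathcal{E})\oplus\mathcal{E}nd(\mathcal{E})\oplus\mathrm{Sym}^2(\mathcal{E}^\vee)$, and check that the hypothesized $\mathbb{Q}$-Chern class equality is exactly the vanishing of $\widehat{ch}_2$ of its reflexive extension, so that Theorem \ref{genthm} applies. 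The only differences are cosmetic: you compute via Chern roots of $\mathcal{E}$ (note the proportionality constant between the displayed equality and $\widehat{ch}_2(\mathcal{F})$ is $-1$, not $-2(n+1)$, though this does not affect the equivalence of the two vanishing statements), whereas the paper uses the direct-sum formula for $\widehat{c}_2$, and you omit the (logically unneeded for Theorem \ref{genthm}) polystability remark that the paper includes via Proposition \ref{propstab}.
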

\begin{proof}
Let $j:X_{reg}\to X$ denote the natural inclusion. Then we have $\mathcal{T}_X\cong j_*\mathcal{T}_{X_{reg}}\cong\textrm{Sym}^{[2]}(\mathcal{E}')=\textrm{Sym}^2(\mathcal{E})^{**}$ by the uniqueness of reflexive extension. Let $G_0=Sp(2n,\mathbb{R})$, $K_0=U(n)$ a maximal compact subgroup of $G_0$, Then complexifications $G$ and $K$ of $G_0$ and $K_0$ respectively, are $G=Sp(2n,\mathbb{C})$ and $K=GL(n,\mathbb{C})$.\\
Let $P$ be the frame bundle of $\mathcal{E}$. Then $P$ is a principal $K=GL(n,\mathbb{C})$-bundle on $X_{reg}$ and we can write $\mathcal{E}\cong P\times_KV$, where $V$ denotes the typical fiber of $\mathcal{E}$. It follows that $\mathcal{T}_{X_{reg}}\cong \textrm{Sym}^2(P\times_KV)=P\times_K\textrm{Sym}^2(V)$, so in particular $\mathcal{T}_{X_{reg}}$ admits a reduction in structure group from $GL(n(n+1)/2,\mathbb{C})$ to $K$. From \cite[Table 2 on p.193]{maxs}, we see that $\textrm{Sym}^2(V)$ and $\mathfrak{g}^{-1,1}$ are isomorphic as $K$-representations. Thus there is an isomorphism
\begin{align*}
    \theta:\mathcal{T}_{X_{reg}}\to P\times_K\mathfrak{g}^{-1,1}
\end{align*}
such that $[\theta(u),\theta(v)]=0$ for all local sections $u,v$ of $\mathcal{T}_{X_{reg}}$. The latter statement is clear because $G_0$ is a Hodge group, so the Lie bracket of any two elements of $\mathfrak{g}^{-1,1}$ lands in $\mathfrak{g}^{-2,2}$, which is zero. It follows that $(P,\theta)$ is a uniformizing system of Hodge bundles on $X_{reg}$.\\
Hence we can form the system of Hodge bundles $E=P\times_K\mathfrak{g}=P\times_K\mathfrak{g}^{-1,1}\oplus P\times_K\mathfrak{g}^{0,0}\oplus P\times_K\mathfrak{g}^{1,-1}$, where we have $P\times_K\mathfrak{g}^{1,-1}\cong \textrm{Sym}^2(\mathcal{E}^{\vee})\cong\Omega^1_{X_{reg}}$, and $P\times_K\mathfrak{g}^{0,0}\cong\mathcal{E}nd(\mathcal{E})$. The last isomorphism follows from observing that $\mathfrak{g}^{0,0}$ and $\textrm{End}(V)$ are isomorphic as $K$-representations. Indeed, they are isomorphic as $\mathbb{C}$-vector spaces, and the adjoint action of $K$ on $\mathfrak{g}^{0,0}$ is compatible with the action of $K$ on $\textrm{End}(V)$ by conjugation.\\
Note that the system of Hodge bundles $E=P\times_K\mathfrak{g}\cong\mathcal{T}_{X_{reg}}\oplus \mathcal{E}nd(\mathcal{E})\oplus\Omega^1_{X_{reg}}$ is in particular a Higgs bundle with Higgs field $\widehat{\theta}:E\to E\otimes\Omega^1_{X_{reg}}$ given by sending a local section $u$ of $E$ to the local section $v\mapsto[\theta(v),u]$ of $\mathcal{H}om(\mathcal{T}_{X_{reg}},E)\cong E\otimes\Omega^1_{X_{reg}}$. It is clear that $E$ has slope zero with respect to $K_X$. By Proposition \ref{propstab}, we know that $E$ is $K_X$-polystable as a Higgs bundle on $X_{reg}$. In fact, $E$ is $K_X$-stable as a Higgs bundle on $X_{reg}$ because $\mathfrak{g}=\mathfrak{sp}(2n,\mathbb{C})$ is a simple Lie algebra.\\
Let $\mathcal{F}_X$ denote the reflexive extension of $E$ to $X$, then $\mathcal{F}_X\cong\mathcal{T}_X\oplus \mathcal{E}nd(\mathcal{E}')\oplus\Omega^{[1]}_X$. It is clear that $\widehat{c}_1(\mathcal{F}_X)\cdot[K_X]^{n-1}=0$. Moreover, we compute
\begin{align*}
\widehat{c}_2(\mathcal{F}_X)=2\widehat{c}_2(X)-\widehat{c}_1(X)^2+2n\widehat{c}_2(\mathcal{E}')-(n-1)\widehat{c}_1(X)^2, 
\end{align*}
so by the assumption of the Proposition we have $\widehat{c}_2(\mathcal{F}_X)\cdot[K_X]^{n-2}=\widehat{ch}_2(\mathcal{F}_X)\cdot[K_X]^{n-2}=0$.\\
Therefore, $X$ satisfies the two conditions of Theorem \ref{genthm}, and we conclude that $X\cong\mathcal{H}_n/\Gamma$, where $\Gamma\subset PSp(2n,\mathbb{R})$ is a discrete, cocompact subgroup acting fixed point freely in codimension one on $\mathcal{H}_n$.
\end{proof}


\subsection{Necessary conditions}

Since the automorphism group $\textrm{Aut}(\mathcal{H}_n)=PSp(2n,\mathbb{R})$ is a connected Lie group, we choose the associated Hodge group to be $G_0=Sp(2n,\mathbb{R})$, and its complexification to be $G=Sp(2n,\mathbb{C})$, which is also connected. Thus the sufficient conditions of Proposition \ref{prop6} are also necessary conditions for a projective klt variety with ample canonical divisor to be uniformized by $\mathcal{H}_n$.

\begin{proposition}\label{propn}
Let $X$ be a projective klt variety of dimension $n(n+1)/2$ with $K_X$ ample, such that $X=\mathcal{H}_n/\widehat{\Gamma}$, where $\widehat{\Gamma}\subset\textrm{Aut}(\mathcal{H}_n)$ acts discretely, cocompactly, and fixed point freely in codimension one on $\mathcal{H}_n$. Then the tangent bundle of the regular locus of $X$ satisfies $\mathcal{T}_{X_{reg}}\cong\textrm{Sym}^2(\mathcal{E})$, where $\mathcal{E}$ is a rank $n$ vector bundle on $X$. Moreover, $X$ satisfies the Chern class equality \ref{cceq}.
\end{proposition}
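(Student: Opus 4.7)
The plan is to mirror Step I of the proof of Theorem \ref{genthm}, specialized to $\mathcal{D} = \mathcal{H}_n$. Since $\widehat{\Gamma} \subset \textrm{Aut}(\mathcal{H}_n) = PSp(2n,\mathbb{R})$ is discrete and cocompact, my first move is to invoke Remark \ref{remgp}(b) and work with the linear cover $G_0 = Sp(2n,\mathbb{R})$ of $\textrm{Aut}(\mathcal{H}_n)$, whose maximal compact is $K_0 = U(n)$, together with its complexifications $G = Sp(2n,\mathbb{C})$, $K = GL(n,\mathbb{C})$. This choice is essential, because it is the lift of the structure group from $\textrm{Aut}(\mathcal{H}_n)$ to $Sp(2n,\mathbb{R})$ that makes the standard representation $V \cong \mathbb{C}^n$ of $K$ available to produce a rank-$n$ bundle $\mathcal{E}$; the complexified maximal compact of $PSp(2n,\mathbb{R})$ is $GL(n,\mathbb{C})/\{\pm I\}$, which does not admit an honest $n$-dimensional standard representation.

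With this choice fixed, Step I of the proof of Theorem \ref{genthm} applies almost verbatim: Selberg's lemma yields a torsion-free normal subgroup $\Gamma' \trianglelefteq \widehat{\Gamma}$ of finite index, producing a smooth projective cover $Y = \mathcal{H}_n/\Gamma'$ and a Galois quasi-\'{e}tale map $\gamma : Y \to X$ with group $H = \widehat{\Gamma}/\Gamma'$. Theorem \ref{simpthm} furnishes a uniformizing system of Hodge bundles $(P', \theta')$ on $Y$ for $G_0$ satisfying $c_1(P' \times_K \mathfrak{g}) \cdot [K_Y]^{n-1} = c_2(P' \times_K \mathfrak{g}) \cdot [K_Y]^{n-2} = 0$. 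Descending this structure along the \'{e}tale restriction $Y^o := \gamma^{-1}(X_{reg}) \to X_{reg}$, with $Y^o \subset Y$ big by purity of branch locus, then produces a uniformizing system of Hodge bundles $(P, \theta)$ on $X_{reg}$ with $\mathcal{T}_{X_{reg}} \cong P \times_K \mathfrak{g}^{-1,1}$.

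Next I would exploit the block-matrix description of $\mathfrak{sp}(2n,\mathbb{C})$ recalled at the start of this section to identify $\mathfrak{g}^{-1,1} \cong \textrm{Sym}^2(V)$, $\mathfrak{g}^{0,0} \cong \textrm{End}(V)$, and $\mathfrak{g}^{1,-1} \cong \textrm{Sym}^2(V^\vee)$ as $K$-representations. Setting $\mathcal{E} := P \times_K V$ then gives a rank-$n$ vector bundle on $X_{reg}$ with
\begin{equation*}
    \textrm{Sym}^2(\mathcal{E}) \;\cong\; P \times_K \textrm{Sym}^2(V) \;\cong\; P \times_K \mathfrak{g}^{-1,1} \;\cong\; \mathcal{T}_{X_{reg}},
\end{equation*}
proving the first assertion. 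Consequently, the associated system of Hodge bundles decomposes as $E = P \times_K \mathfrak{g} \cong \mathcal{T}_{X_{reg}} \oplus \mathcal{E}nd(\mathcal{E}) \oplus \Omega^1_{X_{reg}}$, and its reflexive extension to $X$ is $\mathcal{F}_X \cong \mathcal{T}_X \oplus \mathcal{E}nd(\mathcal{E}') \oplus \Omega^{[1]}_X$, where $\mathcal{E}'$ is the reflexive extension of $\mathcal{E}$.

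For the Chern class equality, I would run the same $\mathbb{Q}$-Chern class calculation as in the proof of Proposition \ref{prop6}, but in reverse. The decomposition of $\mathcal{F}_X$ gives $\widehat{c}_1(\mathcal{F}_X) = 0$ together with
\begin{equation*}
    \widehat{c}_2(\mathcal{F}_X) \;=\; 2\widehat{c}_2(X) - \widehat{c}_1(X)^2 + 2n\,\widehat{c}_2(\mathcal{E}') - (n-1)\widehat{c}_1(\mathcal{E}')^2.
\end{equation*}
Step I of Theorem \ref{genthm} supplies $\widehat{ch}_2(\mathcal{F}_X) \cdot [K_X]^{n-2} = 0$, which reduces to $\widehat{c}_2(\mathcal{F}_X) \cdot [K_X]^{n-2} = 0$ since $\widehat{c}_1(\mathcal{F}_X) = 0$; intersecting the displayed identity with $[K_X]^{n-2}$ then yields \eqref{cceq}. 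The only delicate conceptual point is the initial Remark \ref{remgp}(b) step; the remainder is a routine combination of what has already been established in Step I of Theorem \ref{genthm} with the representation-theoretic identifications in $\mathfrak{sp}(2n,\mathbb{C})$.
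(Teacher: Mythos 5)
Your argument is correct and follows essentially the same route as the paper: fix the Hodge group $G_0=Sp(2n,\mathbb{R})$ with $K=GL(n,\mathbb{C})$ (justified by Remark \ref{remgp}(b) and the connectedness of $\mathrm{Aut}(\mathcal{H}_n)=PSp(2n,\mathbb{R})$), obtain the uniformizing system of Hodge bundles $(P,\theta)$ on $X_{reg}$, identify $\mathfrak{g}^{-1,1}\cong\mathrm{Sym}^2(V)$ as a $K$-representation to get $\mathcal{T}_{X_{reg}}\cong\mathrm{Sym}^2(\mathcal{E})$ with $\mathcal{E}=P\times_K V$, and translate $\widehat{c}_2(\mathcal{F}_X)\cdot[K_X]^{n-2}=0$ into the equality (\ref{cceq}). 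The only cosmetic difference is that you re-run Step I of the proof of Theorem \ref{genthm} (Selberg's lemma, Theorem \ref{simpthm} on $Y$, descent along $Y^o\to X_{reg}$) instead of simply citing Theorem \ref{genthm} as the paper does, which is the same content.
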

\begin{proof}
We know from Theorem \ref{genthm} that the smooth locus $X_{reg}$ of $X$ admits a uniformizing system of Hodge bundles $(P,\theta)$ corresponding to the Hodge group $G_0=Sp(2n,\mathbb{R})$, such that $E=P\times_K\mathfrak{g}$ is $K_X$-polystable as a Higgs bundle on $X_{reg}$, and $\widehat{c}_2(E')\cdot[K_X]^{n-2}=0$, where $E'$ denotes the unique reflexive extension of $E$ to $X$. Thus there is an isomorphism $\theta:\mathcal{T}_{X_{reg}}\cong P\times_K\mathfrak{g}^{-1,1}$. Recall from the proof of Proposition \ref{prop6} that $\mathfrak{g}^{-1,1}$ is isomorphic as a $K$-representation to $\textrm{Sym}^2(V)$, where $V$ is a complex $n$-dimensional vector space. So we can write $\mathcal{T}_{X_{reg}}\cong P\times_K\textrm{Sym}^2(V)= \textrm{Sym}^2(\mathcal{E})$, where $\mathcal{E}=P\times_K V$ is a rank $n$ vector bundle on $X_{reg}$. We also have $\Omega^1_{X_{reg}}\cong \textrm{Sym}^2(\mathcal{E}^{\vee})\cong P\times_K\mathfrak{g}^{1,-1}$, and $\mathcal{E}nd(\mathcal{E})\cong P\times_K\mathfrak{g}^{0,0}$, from which it follows that the system of Hodge bundles $P\times_K\mathfrak{g}$ is isomorphic to $\mathcal{T}_{X_{reg}}\oplus\mathcal{E}nd(\mathcal{E})\oplus\Omega^1_{X_{reg}}$.\\
Let $\mathcal{E}'$ denote the reflexive extension of $\mathcal{E}$ to $X$. Then $E'=\mathcal{T}_X\oplus\mathcal{E}nd(\mathcal{E}')\oplus\Omega^{[1]}_X$, and
the equality $\widehat{c}_2(E')\cdot[K_X]^{n-2}=0$ is equivalent to
\begin{align*}
    \widehat{c}_2(\mathcal{T}_X\oplus \mathcal{E}nd(\mathcal{E}')\oplus\Omega^{[1]}_X)\cdot[K_X]^{n-2}=0.
\end{align*}    
Using the formula for the second $\mathbb{Q}$-Chern class of a direct sum of reflexive sheaves, the above equality can be rephrased as
\begin{align*}
    [2\widehat{c}_2(X)-\widehat{c}_1(X)^2+2n\widehat{c}_2(\mathcal{E}')-(n-1)\widehat{c}_1(\mathcal{E}')^2]\cdot[K_X]^{n-2}=0.
\end{align*}
Thus it follows that $X$ satisfies the required Chern class equality (\ref{cceq}), this concludes the proof.
\end{proof}

Putting together Propositions \ref{prop6} and \ref{propn}, we arrive at Theorem \ref{equivthm}.

\section{Uniformization by Hermitian symmetric space of type \texorpdfstring{$DIII$}{}}

This example is very similar to the Siegel upper half space. The Hermitian symmetric space of type $DIII$, which we denote by $\mathcal{D}_n$, can be expressed as the quotient $G_0/K_0$, where the associated Hodge group is $G_0=SO^*(2n)$ defined as
\begin{align*}
    SO^*(2n)=\{M\in SL(2n,\mathbb{C}): M^TM=I_{2n}, \Bar{M}^TJ_nM=J_n\},
\end{align*}
where $J_n$ is the matrix given by $J_n=\begin{bmatrix}0&I_n\\-I_n&0\end{bmatrix}$. We know from \cite[Ch.4, Sec.2]{mok} that the maximal compact subgroup $K_0$ of $G_0$ is given by
\begin{align*}
    K_0=\left\{M=\begin{bmatrix}U&0\\0&\Bar{U}\end{bmatrix}:M\in SU(n,n)\right\},
\end{align*}
from which it is clear that $U\in U(n)\cong K_0$, for $n\in\mathbb{Z}_{\ge1}$. The bounded symmetric domain realization of $DIII$ is as follows (see again \cite[Ch.4, Sec.2]{mok}).
\begin{align*}
    DIII=\{Z\in\mathcal{D}_{n,n}:Z^T=-Z\},
\end{align*}
where the domain $\mathcal{D}_{n,n}$ is as defined in the expression (\ref{mat}). From \cite[p.111]{take}, the group of holomorphic automorphisms of $\mathcal{D}_n$ is $\textrm{Aut}(\mathcal{D}_n)=PSO^*(2n)$, so $\textrm{Aut}(\mathcal{D}_n)$ is a quotient of $G_0$ by a discrete central subgroup. From the list on \cite[p.341]{helga}, the complexified Lie algebra $\mathfrak{g}$ of $G_0$ has complex dimension $n(2n-1)$, and can be expressed as the set of complex $2n\times2n$ trace zero matrices $\begin{bmatrix}A&B\\C&-A^T\end{bmatrix}$, where $A$ is a complex $n\times n$ matrix, and $B$ and $C$ are complex skew-symmetric $n\times n$ matrices. Since $G_0$ is a Hodge group of Hermitian type, the Lie algebra $\mathfrak{g}$ has a Hodge decomposition given by $\mathfrak{g}=\mathfrak{g}^{-1,1}\oplus\mathfrak{g}^{0,0}\oplus\mathfrak{g}^{1,-1}$, where $\mathfrak{g}^{-1,1}$ consists of matrices of the form $\begin{bmatrix}0&B\\0&0\end{bmatrix}$, $\mathfrak{g}^{0,0}$ consists of matrices of the form $\begin{bmatrix}A&0\\0&-A^T\end{bmatrix}$, and $\mathfrak{g}^{1,-1}$ consists of matrices of the form $\begin{bmatrix}0&0\\C&0\end{bmatrix}$. The Lie algebras $\mathfrak{g}^{-1,1}$, $\mathfrak{g}^{0,0}$, and $\mathfrak{g}^{1,-1}$ have dimensions $n(n-1)/2$, $n^2$, and $n(n-1)/2$ respectively.\\
The compact dual of $\mathcal{D}_n$ is the Isotropic Grassmannian $I_n$, which is a homogeneous complex projective variety of dimension $n(n-1)/2$ for the action of $G=SO^*(2n,\mathbb{C})$. It parametrizes all isotropic subspaces of a complex vector space of dimension $2n$ equipped with an inner product. Just as in the case of the Lagrangian Grassmannian, there is an open embedding $j:\mathcal{D}_n\hookrightarrow I_n$. The tangent bundle of $I_n$ is given by $\mathcal{T}_{I_n}\cong \bigwedge^2(\mathcal{E})$, where $\mathcal{E}$ is the tautological vector bundle on $I_n$, and corresponds to the standard representation of $K_0=U(n)$. It follows that the tangent bundle of $\mathcal{D}_n$ can be expressed as $\mathcal{T}_{\mathcal{D}_n}\cong \bigwedge^2(\mathcal{E}')$, where $\mathcal{E}'$ denotes the restriction of $\mathcal{E}$ to $\mathcal{D}_n$.  \\
\\
We want to formulate necessary and sufficient conditions for a complex projective variety of dimension $n(n-1)/2$ with klt singularities and ample canonical divisor to be uniformized by the Hermitian symmetric space $\mathcal{D}_n$ of type DIII.

\subsection{Sufficient conditions}

A version of Proposition \ref{prop6} also holds in this case, and the proof is essentially the same. For completeness, we repeat the proof.

\begin{proposition}\label{prop6b}
Let $X$ be a projective, klt variety of dimension $n(n-1)/2$ for some $n\in\mathbb{Z}_{\ge1}$ with $K_X$ ample. Suppose that the tangent bundle of the regular locus $X_{reg}$ of $X$ satisfies $\mathcal{T}_{X_{reg}}\cong \bigwedge^2(\mathcal{E})$, where $\mathcal{E}$ is a vector bundle of rank $n$ on $X_{reg}$. Let $\mathcal{E}'$ denote the reflexive extension of $\mathcal{E}$ to $X$, and suppose that the Chern class equality 
\begin{align}\label{cceq2}
    [2\widehat{c}_2(X)-\widehat{c}_1(X)^2+2n\widehat{c}_2(\mathcal{E}')-(n-1)\widehat{c}_1(\mathcal{E}')^2]\cdot[K_X]^{n-2}=0
\end{align}
holds on $X$. Then $X\cong\mathcal{D}_n/\Gamma$, where $\Gamma\subset PSO^*(2n,\mathbb{R})$ is a discrete, cocompact subgroup, and acts fixed point freely in codimension one on the domain $\mathcal{D}_n$ of type $DIII$. 
\end{proposition}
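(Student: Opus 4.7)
The plan is to verify both conditions of Theorem \ref{genthm} for the Hodge group $G_0 = SO^*(2n)$, whose relevant complexifications are $G = SO^*(2n,\mathbb{C})$ and $K = GL(n,\mathbb{C})$. The argument runs in close parallel to the proof of Proposition \ref{prop6}; the only genuine change is that the $K$-representation $\mathrm{Sym}^2 V$ is replaced by $\bigwedge^2 V$, where $V$ denotes the standard representation of $K$.

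First I would build the uniformizing system of Hodge bundles. Take $P$ to be the frame bundle of $\mathcal{E}$, a principal $K$-bundle on $X_{reg}$ with $\mathcal{E} \cong P \times_K V$. From the explicit description of $\mathfrak{g} = \mathfrak{so}^*(2n,\mathbb{C})$ recalled in the paragraphs preceding the proposition, one reads off that $\mathfrak{g}^{-1,1} \cong \bigwedge^2 V$, $\mathfrak{g}^{1,-1} \cong \bigwedge^2 V^\vee$, and $\mathfrak{g}^{0,0} \cong \mathrm{End}(V)$ as $K$-representations. Hence the given isomorphism $\mathcal{T}_{X_{reg}} \cong \bigwedge^2 \mathcal{E}$ produces a map $\theta : \mathcal{T}_{X_{reg}} \xrightarrow{\sim} P \times_K \mathfrak{g}^{-1,1}$, and the bracket condition $[\theta(u),\theta(v)] = 0$ is automatic because $\mathfrak{g}^{-2,2} = 0$ for a Hodge group of Hermitian type. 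Thus $(P,\theta)$ is a uniformizing system of Hodge bundles on $X_{reg}$ for $G_0$, and the associated system of Hodge bundles
\[
E = P \times_K \mathfrak{g} \;\cong\; \mathcal{T}_{X_{reg}} \oplus \mathcal{E}nd(\mathcal{E}) \oplus \Omega^1_{X_{reg}}
\]
is $K_X$-polystable as a Higgs bundle on $X_{reg}$ by Proposition \ref{propstab} (in fact stable when $\mathfrak{so}^*(2n)$ is simple).

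Next I would verify the second Chern-class condition of Theorem \ref{genthm}. Let $\mathcal{E}'$ be the reflexive extension of $\mathcal{E}$ to $X$ and let $\mathcal{F}_X$ denote the reflexive extension of $E$, so that $\mathcal{F}_X \cong \mathcal{T}_X \oplus \mathcal{E}nd(\mathcal{E}') \oplus \Omega^{[1]}_X$. Using $\widehat{c}_1(\mathcal{T}_X) = -\widehat{c}_1(\Omega^{[1]}_X) = -\widehat{c}_1(X)$ and $\widehat{c}_1(\mathcal{E}nd(\mathcal{E}')) = 0$, together with the standard identity $c_2(\mathrm{End}(W)) = 2r\,c_2(W) - (r-1)c_1(W)^2$ for a rank $r$ bundle $W$, the formula for the second $\mathbb{Q}$-Chern class of a direct sum of reflexive sheaves yields
\[
\widehat{c}_2(\mathcal{F}_X) = 2\widehat{c}_2(X) - \widehat{c}_1(X)^2 + 2n\widehat{c}_2(\mathcal{E}') - (n-1)\widehat{c}_1(\mathcal{E}')^2.
\]
Since $\widehat{c}_1(\mathcal{F}_X) = 0$, the hypothesis (\ref{cceq2}) is equivalent to $\widehat{ch}_2(\mathcal{F}_X) \cdot [K_X]^{n-2} = 0$.

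Both hypotheses of Theorem \ref{genthm} are therefore in force, with $\mathrm{Aut}(\mathcal{D}_n) = PSO^*(2n)$ a quotient of $G_0 = SO^*(2n)$ by a discrete central subgroup as needed (cf.\ Remark \ref{remgp}). The theorem then supplies $X \cong \mathcal{D}_n/\Gamma$ with $\Gamma \subset PSO^*(2n)$ acting discretely, cocompactly, and fixed-point-freely in codimension one. I do not expect any step to present a real obstacle: the proof is essentially a direct transcription of the $CI$-case, with $\mathrm{Sym}^2$ replaced by $\bigwedge^2$. The two substantive checks are the identification of $\mathfrak{g}^{-1,1}$ as the $K$-representation $\bigwedge^2 V$ (which is immediate from the matrix description of $\mathfrak{so}^*(2n,\mathbb{C})$ given above) and the Chern-class bookkeeping, which coincides with that of Proposition \ref{prop6} since $c_2(\mathrm{End}(\mathcal{E}'))$ is expressed in terms of $c_1(\mathcal{E}')$ and $c_2(\mathcal{E}')$ by the identity above regardless of whether $\mathcal{T}_{X_{reg}}$ is a symmetric or an exterior square of $\mathcal{E}$.
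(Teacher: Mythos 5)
Your proposal is correct and follows essentially the same route as the paper's own proof: take $P$ to be the frame bundle of $\mathcal{E}$, identify $\bigwedge^2 V$ with $\mathfrak{g}^{-1,1}$ as $K=GL(n,\mathbb{C})$-representations to get the uniformizing system of Hodge bundles, invoke Proposition \ref{propstab} for polystability, and do the same $\mathbb{Q}$-Chern class bookkeeping for $\mathcal{F}_X\cong\mathcal{T}_X\oplus\mathcal{E}nd(\mathcal{E}')\oplus\Omega^{[1]}_X$ before applying Theorem \ref{genthm}. No gaps.
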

\begin{proof}
Let $j:X_{reg}\to X$ denote the natural inclusion. Then we have $\mathcal{T}_X\cong j_*\mathcal{T}_{X_{reg}}\cong \bigwedge^{[2]}(\mathcal{E}')=(\bigwedge^2(\mathcal{E}'))^{**}$ by the uniqueness of reflexive extension. Let $G_0=SO^*(2n,\mathbb{R})$, $K_0=U(n)$ a maximal compact subgroup of $G_0$. Then we choose complexifications $G$ and $K$ of $G_0$ and $K_0$ respectively, to be $G=SO^*(2n,\mathbb{C})$ and $K=GL(n,\mathbb{C})$.\\
Let $P$ be the frame bundle of $\mathcal{E}$. Then $P$ is a principal $K=GL(n,\mathbb{C})$-bundle on $X_{reg}$ and we can write $\mathcal{E}\cong P\times_KV$, where $V\cong\mathbb{C}^n$ denotes the typical fiber of $\mathcal{E}$. It follows that $\mathcal{T}_{X_{reg}}\cong \bigwedge^2(P\times_KV)=P\times_K\bigwedge^2(V)$, so in particular $\mathcal{T}_{X_{reg}}$ admits a reduction in structure group from $GL(n(n-1)/2,\mathbb{C})$ to $K$. Note that $\bigwedge^2(V)$ and $\mathfrak{g}^{-1,1}$ are isomorphic as $K$-representations, again by \cite[Table 2]{maxs}. Thus there is an isomorphism
\begin{align*}
    \theta:\mathcal{T}_{X_{reg}}\to P\times_K\mathfrak{g}^{-1,1}
\end{align*}
such that $[\theta(u),\theta(v)]=0$ for all local sections $u,v$ of $\mathcal{T}_{X_{reg}}$. It follows that $(P,\theta)$ is a uniformizing system of Hodge bundles on $X_{reg}$.\\
Hence we can form the system of Hodge bundles $E=P\times_K\mathfrak{g}=P\times_K\mathfrak{g}^{-1,1}\oplus P\times_K\mathfrak{g}^{0,0}\oplus P\times_K\mathfrak{g}^{1,-1}$, where we have $P\times_K\mathfrak{g}^{1,-1}\cong \bigwedge^2(\mathcal{E}^{\vee})\cong\Omega^1_{X_{reg}}$, and $P\times_K\mathfrak{g}^{0,0}\cong\mathcal{E}nd(\mathcal{E})$. The last isomorphism follows from observing, as in Proposition \ref{prop6}, that $\mathfrak{g}^{0,0}$ and $\textrm{End}(V)$ are isomorphic as $K$-representations. Hence we have $P\times_K\mathfrak{g}^{0,0}\cong P\times_K\textrm{End}(V)=\mathcal{E}nd(P\times_KV)$.\\
Note that the system of Hodge bundles $E=P\times_K\mathfrak{g}\cong\mathcal{T}_{X_{reg}}\oplus \mathcal{E}nd(\mathcal{E})\oplus\Omega^1_{X_{reg}}$ is in particular a Higgs bundle with Higgs field $\widehat{\theta}:E\to E\otimes\Omega^1_{X_{reg}}$ given by sending a local section $u$ of $E$ to the local section $v\mapsto[\theta(v),u]$ of $\mathcal{H}om(\mathcal{T}_{X_{reg}},E)\cong E\otimes\Omega^1_{X_{reg}}$. It is clear that $E$ has slope zero with respect to $K_X$. We know from Proposition \ref{propstab} that $E$ is $K_X$-polystable as a Higgs bundle on $X_{reg}$. In fact, $E$ is $K_X$-stable as a Higgs bundle on $X_{reg}$ because again $\mathfrak{g}$ is a simple Lie algebra.\\
Let $\mathcal{F}_X$ denote the reflexive extension of $E$ to $X$, then $\mathcal{F}_X\cong\mathcal{T}_X\oplus \mathcal{E}nd(\mathcal{E}')\oplus\Omega^{[1]}_X$. It is clear that $\widehat{c}_1(\mathcal{F}_X)\cdot[K_X]^{n-1}=0$. Moreover, we compute 
\begin{align*}
\widehat{c}_2(\mathcal{F}_X)=2\widehat{c}_2(X)-\widehat{c}_1(X)^2+2n\widehat{c}_2(\mathcal{E}')-(n-1)\widehat{c}_1(X)^2,
\end{align*}
so by the assumption of the Proposition we have $\widehat{c}_2(\mathcal{F}_X)\cdot[K_X]^{n-2}=\widehat{ch}_2(\mathcal{F}_X)\cdot[K_X]^{n-2}=0$. \\
Again we see that $X$ satisfies the two conditions of Theorem \ref{genthm}, and we conclude that $X\cong\mathcal{D}_n/\Gamma$, where $\Gamma\subset PSO^*(2n,\mathbb{R})$ is a discrete, cocompact subgroup, acting fixed point freely in codimension one on $\mathcal{D}_n$.
\end{proof}

\subsection{Necessary conditions}

Since $\textrm{Aut}(\mathcal{D}_n)=PSO^*(2n)$ is a connected Lie group, we take the associated Hodge group to be $G_0=SO^*(2n)$, and its complexification to be $G=SO^*(2n,\mathbb{C})$, which is also connected. Moreover, $K_0=U(n)$ and its complexification $K=GL(n,\mathbb{C})$ are connected too, so the sufficient conditions of Proposition \ref{prop6b} are also necessary conditions for a projective klt variety with ample canonical divisor to be uniformized by $\mathcal{D}_n$.
The proof is essentialy the same as that of Proposition \ref{propn}.

\begin{proposition}\label{propnb}
Let $X$ be a projective klt variety of dimension $n(n-1)/2$ with $K_X$ ample, such that $X=\mathcal{D}_n/\widehat{\Gamma}$, where $\widehat{\Gamma}\subset\textrm{Aut}(\mathcal{D}_n)$ acts discretely, cocompactly, and fixed point freely in codimension one on $\mathcal{D}_n$. Then the tangent bundle of the regular locus of $X$ satisfies $\mathcal{T}_{X_{reg}}\cong \bigwedge^2(\mathcal{E})$, where $\mathcal{E}$ is a rank $n$ vector bundle on $X_{reg}$. Moreover, $X$ satisfies the Chern class equality \ref{cceq2}.
\end{proposition}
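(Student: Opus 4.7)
My plan is to transcribe the proof of Proposition \ref{propn} to the $DIII$ setting line by line, with $\bigwedge^2$ replacing $\textrm{Sym}^2$ as the relevant $K$-representation. First, I would apply Theorem \ref{genthm} with Hodge group $G_0=SO^*(2n)$ and complexifications $G=SO^*(2n,\mathbb{C})$, $K=GL(n,\mathbb{C})$. Since $\textrm{Aut}(\mathcal{D}_n)=PSO^*(2n)$ is the quotient of $G_0$ by a discrete central subgroup (cf. Remark \ref{remgp}), the hypothesis $X\cong\mathcal{D}_n/\widehat{\Gamma}$ produces a uniformizing system of Hodge bundles $(P,\theta)$ on $X_{reg}$ for $G_0$, with $E=P\times_K\mathfrak{g}$ polystable as a Higgs bundle on $X_{reg}$ and its reflexive extension $E'$ on $X$ satisfying $\widehat{ch}_2(E')\cdot[K_X]^{n-2}=0$.

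Second, I would unpack this abstract uniformizing data into a concrete sheaf decomposition. From the matrix description of $\mathfrak{g}=\mathfrak{so}^*(2n,\mathbb{C})$ recalled just before Proposition \ref{prop6b} (and consistent with \cite[Table 2]{maxs} cited in its proof), the Hodge summands $\mathfrak{g}^{-1,1}$, $\mathfrak{g}^{1,-1}$, and $\mathfrak{g}^{0,0}$ are respectively isomorphic as $K$-representations to $\bigwedge^2(V)$, $\bigwedge^2(V^\vee)$, and $\textrm{End}(V)$, where $V=\mathbb{C}^n$ is the standard representation of $K=GL(n,\mathbb{C})$. Setting $\mathcal{E}=P\times_K V$ (a rank $n$ vector bundle on $X_{reg}$), the isomorphism $\theta:\mathcal{T}_{X_{reg}}\cong P\times_K\mathfrak{g}^{-1,1}$ becomes $\mathcal{T}_{X_{reg}}\cong\bigwedge^2(\mathcal{E})$, while the remaining summands identify as $\Omega^1_{X_{reg}}\cong\bigwedge^2(\mathcal{E}^\vee)\cong P\times_K\mathfrak{g}^{1,-1}$ and $\mathcal{E}nd(\mathcal{E})\cong P\times_K\mathfrak{g}^{0,0}$. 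Taking reflexive hulls on $X$ yields $E'\cong\mathcal{T}_X\oplus\mathcal{E}nd(\mathcal{E}')\oplus\Omega^{[1]}_X$, where $\mathcal{E}'$ is the reflexive extension of $\mathcal{E}$ to $X$.

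Finally, I would translate the Chern class identity into the form (\ref{cceq2}). Since $\widehat{c}_1(E')=0$, the condition $\widehat{ch}_2(E')\cdot[K_X]^{n-2}=0$ collapses to $\widehat{c}_2(E')\cdot[K_X]^{n-2}=0$. Using additivity of the total $\mathbb{Q}$-Chern class for reflexive direct sums, together with $\widehat{c}_1(\mathcal{T}_X)+\widehat{c}_1(\Omega^{[1]}_X)=0$, $\widehat{c}_1(\mathcal{E}nd(\mathcal{E}'))=0$, $\widehat{c}_2(\Omega^{[1]}_X)=\widehat{c}_2(X)$, and the standard rank-$n$ identity $\widehat{c}_2(\mathcal{E}nd(\mathcal{E}'))=2n\,\widehat{c}_2(\mathcal{E}')-(n-1)\widehat{c}_1(\mathcal{E}')^2$, one obtains
\[
\widehat{c}_2(E')=2\widehat{c}_2(X)-\widehat{c}_1(X)^2+2n\,\widehat{c}_2(\mathcal{E}')-(n-1)\widehat{c}_1(\mathcal{E}')^2,
\]
which is precisely (\ref{cceq2}) after intersection with $[K_X]^{n-2}$. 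Since every step mirrors the Siegel case and all the hard analytic content is absorbed into Theorem \ref{genthm}, there is no genuine obstacle here; the only point requiring attention is the $K$-equivariant identification $\mathfrak{g}^{-1,1}\cong\bigwedge^2(V)$, which is immediate from the skew-symmetric matrix form of the elements of $\mathfrak{g}^{-1,1}$.
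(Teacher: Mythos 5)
Your proposal is correct and follows essentially the same route as the paper: invoke Theorem \ref{genthm} for $G_0=SO^*(2n)$ (noting $\textrm{Aut}(\mathcal{D}_n)=PSO^*(2n)$ is a quotient by a discrete central subgroup), identify $\mathfrak{g}^{-1,1}\cong\bigwedge^2(V)$ as a $K$-representation to get $\mathcal{T}_{X_{reg}}\cong\bigwedge^2(\mathcal{E})$ with $E'\cong\mathcal{T}_X\oplus\mathcal{E}nd(\mathcal{E}')\oplus\Omega^{[1]}_X$, and expand $\widehat{c}_2(E')\cdot[K_X]^{n-2}=0$ into (\ref{cceq2}). The only difference is cosmetic: you make explicit the identity $\widehat{c}_2(\mathcal{E}nd(\mathcal{E}'))=2n\widehat{c}_2(\mathcal{E}')-(n-1)\widehat{c}_1(\mathcal{E}')^2$, which the paper leaves as a standard formula.
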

\begin{proof}
From Theorem \ref{genthm}, it follows that $X_{reg}$ admits a uniformizing system of Hodge bundles $(P,\theta)$ corresponding to the Hodge group $G_0=SO^*(2n)$, such that the system of Hodge bundles $E=P\times_K\mathfrak{g}$ is $K_X$-polystable as a Higgs bundle on $X_{reg}$. Moreover, the equality $\widehat{c}_2(E')\cdot[K_X]^{n-2}=0$. holds, where $E'$ denotes the extension of $E$ to $X$ as a reflexive sheaf. Hence there is an isomorphism $\theta:\mathcal{T}_Y\cong P\times_K\mathfrak{g}^{-1,1}$. Recall from the proof of Proposition \ref{prop6b} that $\mathfrak{g}^{-1,1}$ is isomorphic as a $K$-representation to $\bigwedge^2(V)$, where $V$ is a complex $n$-dimensional vector space. So we can write $\mathcal{T}_{X_{reg}}\cong P\times_K\bigwedge^2(V)= \bigwedge^2(\mathcal{E})$, where $\mathcal{E}=P\times_K V$ is a rank $n$ vector bundle on $Y$. We also have $\Omega^1_{X_{reg}}\cong \bigwedge^2(\mathcal{E}^{\vee})\cong P\times_K\mathfrak{g}^{1,-1}$, and $\mathcal{E}nd(\mathcal{E})\cong P\times_K\mathfrak{g}^{0,0}$, from which it follows that the system of Hodge bundles $P\times_K\mathfrak{g}$ is isomorphic to $\mathcal{T}_{X_{reg}}\oplus \mathcal{E}nd(\mathcal{E})\oplus\Omega^1_{X_{reg}}$.\\
Let $\mathcal{E}'$ denote the reflexive extension of $\mathcal{E}$ to $X$. Then $E'=\mathcal{T}_X\oplus\mathcal{E}nd(\mathcal{E}')\oplus\Omega^{[1]}_X$, and the Chern class equality $\widehat{c}_2(E')\cdot[K_X]^{n-2}=0$ is equivalent to
\begin{align*}
    \widehat{c}_2(\mathcal{T}_X\oplus \mathcal{E}nd(\mathcal{E}')\oplus\Omega^{[1]}_X)\cdot[K_X]^{n-2}=0.
\end{align*}    
Using the formula for the second $\mathbb{Q}$-Chern class of a direct sum of vector bundles, the above equality can be rephrased as
\begin{align*}
    [2\widehat{c}_2(X)-\widehat{c}_1(X)^2+2n\widehat{c}_2(\mathcal{E})-(n-1)\widehat{c}_1(\mathcal{E})^2]\cdot[K_X]^{n-2}=0.
\end{align*}
Thus it follows that $X$ satisfies the Chern class equality (\ref{cceq2}), and we are done.
\end{proof}
Putting together Propositions \ref{prop6b} and \ref{propnb}, we arrive at the following necessary and sufficient condition for a projective klt variety $X$ with ample canonical divisor to be uniformized by the Hermitian symmetric space $\mathcal{D}_n$ of type $DIII$. 

\begin{theorem}\label{equivthmb}
Let $X$ be a projective klt variety of dimension $n(n-1)/2$ such that the canonical divisor $K_X$ is ample. Then $X\cong\mathcal{D}_n/\Gamma$, where $\Gamma\subset PSO^*(2n,\mathbb{R})$ is a discrete, cocompact subgroup, and acts fixed point freely in codimension one on $\mathcal{D}_n$, if and only if $X$ satisfies
\begin{itemize}
    \item $\mathcal{T}_{X_{reg}}\cong \bigwedge^2(\mathcal{E})$
    \item $[2\widehat{c}_2(X)-\widehat{c}_1(X)^2+2n\widehat{c}_2(\mathcal{E}')-(n-1)\widehat{c}_1(\mathcal{E}')^2]\cdot[K_X]^{n-2}=0$,
\end{itemize}
where $\mathcal{E}$ is a vector bundle of rank $n$ on $X_{reg}$, and $\mathcal{E}'$ denotes the reflexive extension of $\mathcal{E}$ to $X$.
\end{theorem}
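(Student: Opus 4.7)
My plan for Theorem \ref{equivthmb} is to obtain the equivalence directly from the two already-proven statements in this section, namely Proposition \ref{prop6b} (sufficiency) and Proposition \ref{propnb} (necessity), which between them cover both directions of the ``if and only if''. The theorem is essentially a packaging result.

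For the sufficient direction I would apply Proposition \ref{prop6b} verbatim: the hypotheses $\mathcal{T}_{X_{reg}} \cong \bigwedge^2(\mathcal{E})$ together with the stated $\mathbb{Q}$-Chern class equality are precisely the hypotheses of that proposition, and its conclusion is exactly $X \cong \mathcal{D}_n/\Gamma$ for a discrete cocompact subgroup $\Gamma \subset PSO^*(2n,\mathbb{R})$ acting fixed point freely in codimension one on $\mathcal{D}_n$.

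For the necessary direction I would invoke Proposition \ref{propnb}. Here it is important that $\textrm{Aut}(\mathcal{D}_n) = PSO^*(2n)$ is connected and that we may choose as Hodge group the cover $G_0 = SO^*(2n)$, whose maximal compact subgroup $K_0 = U(n)$ is also connected, with connected complexifications $G = SO^*(2n,\mathbb{C})$ and $K = GL(n,\mathbb{C})$. By Remark \ref{remgp}(b) this is a legitimate choice, so the uniformization $X \cong \mathcal{D}_n/\widehat{\Gamma}$ yields, via Theorem \ref{genthm}, a uniformizing system of Hodge bundles $(P,\theta)$ on $X_{reg}$. The $K$-representation identification $\mathfrak{g}^{-1,1} \cong \bigwedge^2 V$ then forces $\mathcal{T}_{X_{reg}}$ to take the required wedge form, and expanding $\widehat{c}_2$ of the reflexive extension of $P \times_K \mathfrak{g} \cong \mathcal{T}_{X_{reg}} \oplus \mathcal{E}nd(\mathcal{E}) \oplus \Omega^1_{X_{reg}}$ produces the stated $\mathbb{Q}$-Chern class equality, just as in the proof of Proposition \ref{propnb}.

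There is no essential obstacle at this stage, since the substantive technical input sits earlier: verifying the representation-theoretic identification of $\mathfrak{g}^{-1,1}$ with $\bigwedge^2 V$, establishing polystability of $P \times_K \mathfrak{g}$ (Proposition \ref{propstab}), and performing the Chern-class bookkeeping for the direct sum $\mathcal{T}_X \oplus \mathcal{E}nd(\mathcal{E}') \oplus \Omega^{[1]}_X$. If there is a subtle point to be careful about, it is only that the connectedness of all relevant groups allows one to avoid passing to an auxiliary finite étale cover when extracting the tangent bundle structure, so that the conditions on $X$ itself (and not merely on a quasi-étale cover) are necessary as well as sufficient.
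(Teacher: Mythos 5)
Your proposal matches the paper exactly: Theorem \ref{equivthmb} is obtained there by simply combining Proposition \ref{prop6b} (sufficiency) with Proposition \ref{propnb} (necessity), the latter being applicable on $X$ itself precisely because $\textrm{Aut}(\mathcal{D}_n)=PSO^*(2n)$ and the relevant groups $G_0=SO^*(2n)$, $K_0=U(n)$, $G$, $K$ are connected, as you note. No gaps; this is the same argument.
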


\section{Uniformization by Hermitian symmetric space of type \texorpdfstring{$BDI$}{}}

The Hermitian symmetric space of type $BDI$, which we denote by $\mathcal{B}_n$, can be expressed as the quotient $G_0/K_0$, where we may take the associated Hodge group to be $G_0=SO(2,n)$, and $K_0=SO(2)\times SO(n)$ a maximal compact subgroup of $G_0$, for $n\in\mathbb{Z}_{\ge2}$. The bounded symmetric domain realization of $BDI$ is more involved than the other examples, we refer the reader to \cite[p.75]{mok} for details. When $n=2$, the domain $\mathcal{B}_2$ is isomorphic to the bidisk $\mathbb{H}^2$, which we have already studied. Thus we may assume $n\ge3$. The Lie group $G_0$ has two connected components, and the automorphism group $PSO(2,n)$ of $\mathcal{B}_n$ (see again \cite{take}) is a quotient of $SO(2,n)$ by a discrete central subgroup. The dimension of $\mathcal{B}_n$ as a complex manifold is $n$. The complexified Lie algebra of $G_0$ is $\mathfrak{g}=\mathfrak{so}(2+n,\mathbb{C})$. The Lie algebra $\mathfrak{g}$ has complex dimension $(2+n)(1+n)/2$, and using \cite[p.341]{helga}, can be expressed as the set of complex $(n+2)\times(n+2)$ matrices $\begin{bmatrix}A&B\\C&D\end{bmatrix}$, where $A$ is a $2\times2$ orthogonal matrix, $D$ is a $n\times n$ orthogonal matrix, and $B$ and $C$ are $2\times n$ and $n\times2$ matrices respectively. The group $G_0=SO(2,n)$ is in fact a Hodge group of Hermitian type, therefore the Lie algebra $\mathfrak{g}$ has a Hodge decomposition given by $\mathfrak{g}=\mathfrak{g}^{-1,1}\oplus\mathfrak{g}^{0,0}\oplus\mathfrak{g}^{1,-1}$, where $\mathfrak{g}^{-1,1}$ consists of those matrices of $\mathfrak{g}$ of the form $\begin{bmatrix}0&B\\0&0\end{bmatrix}$, $\mathfrak{g}^{0,0}$ consists of matrices of the form $\begin{bmatrix}A&0\\0&D\end{bmatrix}$, and $\mathfrak{g}^{1,-1}$ consists of matrices of the form $\begin{bmatrix}0&0\\C&0\end{bmatrix}$. The Lie algebras $\mathfrak{g}^{-1,1}$, $\mathfrak{g}^{0,0}$, and $\mathfrak{g}^{1,-1}$ are of dimensions $n$, $n(n-1)/2+1$, and $n$ respectively. \\
By Lemma \ref{lemtan}, we know there is be a principal $K=SO(2,\mathbb{C})\times SO(n,\mathbb{C})$-bundle $P$ on $\mathcal{B}_n$ such that there is an isomorphism $\theta:\mathcal{T}_{\mathcal{B}_n}\to P\times_K\mathfrak{g}^{-1,1}$. Then there is an orthogonal vector bundle $\mathcal{W}$ of rank $n$, and a line bundle $\mathcal{L}$ such that the system of Hodge bundles $P\times_K\mathfrak{g}$ is isomorphic to $\bigwedge^2(\mathcal{W}\oplus\mathcal{L}\oplus\mathcal{L}^\vee)$. From the Hodge decomposition of the Lie algebra $\mathfrak{g}$, it follows that $P\times_K\mathfrak{g}^{-1,1}\cong\mathcal{T}_{\mathcal{B}_n}\cong \mathcal{H}om(\mathcal{W},\mathcal{L})$, $P\times_K\mathfrak{g}^{0,0}\cong\bigwedge^2(\mathcal{W})\oplus\mathcal{O}_{\mathcal{B}_n}$, and $P\times_K\mathfrak{g}^{1,-1}\cong\Omega^1_{\mathcal{B}_n}\cong \mathcal{H}om(\mathcal{W},\mathcal{L}^\vee)$. Note that, since $\mathcal{W}$ is an orthogonal bundle, it is self-dual.\\
\\
We want to determine necessary and sufficient conditions for a projective, klt variety of dimension $n$ with ample canonical divisor to be uniformized by $\mathcal{B}_n$. 

\subsection{Sufficient conditions}

We apply Theorem \ref{genthm} to prove the following.

\begin{proposition}\label{prop6d}
Let $X$ be a projective, klt variety of dimension $n\ge3$ with $K_X$ ample. Suppose that the tangent bundle of the regular locus $X_{reg}$ of $X$ satisfies $\mathcal{T}_{X_{reg}}\cong \mathcal{H}om(\mathcal{W},\mathcal{L})$, where $\mathcal{W}$ is an orthogonal vector bundle of rank $n$, and $\mathcal{L}$ is a line bundle on $X_{reg}$. Let $\mathcal{W}'$ and $\mathcal{L}'$ denote the reflexive extensions of $\mathcal{W}$ and $\mathcal{L}$ respectively to $X$, and suppose that the Chern class equality 
\begin{align}\label{cceq4}
    [\widehat{c}_2(\mathcal{W}'\wedge\mathcal{W}')+2\widehat{c}_2(\mathcal{W}')-n\widehat{c}_1(\mathcal{L}')^2]\cdot[K_X]^{n-2}=0
\end{align}
holds on $X$. Then $X\cong\mathcal{B}_n/\Gamma$, where $\Gamma\subset\textrm{Aut}(\mathcal{B}_n)$ is a discrete, cocompact subgroup, and acts fixed point freely in codimension one on $\mathcal{B}_n$.
\end{proposition}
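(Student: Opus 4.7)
The plan is to verify the two hypotheses of Theorem \ref{genthm} for the Hodge group $G_0=SO(2,n)$, with maximal compact $K_0=SO(2)\times SO(n)$ and complexification $K=SO(2,\mathbb{C})\times SO(n,\mathbb{C})$. First I would construct the principal $K$-bundle $P$ on $X_{reg}$: the orthogonal structure on $\mathcal{W}$ reduces its frame bundle to $SO(n,\mathbb{C})$, while $\mathcal{L}\oplus\mathcal{L}^{\vee}$ equipped with the hyperbolic pairing is a rank-two orthogonal bundle with structure group $SO(2,\mathbb{C})\cong\mathbb{C}^{*}$; fibrewise taking the product yields $P$. Using the identification of $\mathfrak{g}^{-1,1}$ as a $K$-representation recorded just before the statement, one obtains a canonical isomorphism $P\times_K\mathfrak{g}^{-1,1}\cong\mathcal{H}om(\mathcal{W},\mathcal{L})$, which by hypothesis equals $\mathcal{T}_{X_{reg}}$. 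The resulting map $\theta$ is the required Higgs field, and $[\theta(u),\theta(v)]=0$ is automatic from $\mathfrak{g}^{-2,2}=0$. Hence $(P,\theta)$ is a uniformizing system of Hodge bundles for $G_0$, and by Proposition \ref{propstab} the associated Hodge bundle $E=P\times_K\mathfrak{g}$ is $K_X$-polystable as a Higgs bundle on $X_{reg}$, giving condition (1) of Theorem \ref{genthm}.

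From the Hodge decomposition of $\mathfrak{g}$ together with the self-duality of $\mathcal{W}$, one identifies
\begin{align*}
E\;\cong\;\mathcal{T}_{X_{reg}}\oplus\Omega^{1}_{X_{reg}}\oplus\bigwedge\nolimits^{2}\mathcal{W}\oplus\mathcal{O}_{X_{reg}},
\end{align*}
with reflexive extension $E'\cong\mathcal{T}_X\oplus\Omega^{[1]}_X\oplus(\mathcal{W}'\wedge\mathcal{W}')\oplus\mathcal{O}_X$. Because $\mathcal{W}$ has structure group $SO(n,\mathbb{C})$ we have $\widehat{c}_1(\mathcal{W}')=0$, so $\widehat{c}_1(E')=0$, and the remaining task is $\widehat{c}_2(E')\cdot[K_X]^{n-2}=0$. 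Applying the direct-sum formula, with every cross term involving $\widehat{c}_1(\mathcal{W}')$ or $\widehat{c}_1(\mathcal{O}_X)$ vanishing, gives
\begin{align*}
\widehat{c}_2(E')\;=\;2\,\widehat{c}_2(X)-\widehat{c}_1(X)^{2}+\widehat{c}_2(\mathcal{W}'\wedge\mathcal{W}').
\end{align*}

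To rewrite the right-hand side in the form of (\ref{cceq4}), I would apply the standard tensor-product formulas to the identification $\mathcal{T}_X\cong\mathcal{W}'\otimes\mathcal{L}'$ (using the self-duality of $\mathcal{W}'$). Combined with $\widehat{c}_1(\mathcal{W}')=0$, these yield $\widehat{c}_1(X)=n\,\widehat{c}_1(\mathcal{L}')$ and $\widehat{c}_2(X)=\widehat{c}_2(\mathcal{W}')+\tfrac{n(n-1)}{2}\widehat{c}_1(\mathcal{L}')^{2}$, so that
\begin{align*}
2\,\widehat{c}_2(X)-\widehat{c}_1(X)^{2}
\;=\;2\,\widehat{c}_2(\mathcal{W}')+n(n-1)\widehat{c}_1(\mathcal{L}')^{2}-n^{2}\widehat{c}_1(\mathcal{L}')^{2}
\;=\;2\,\widehat{c}_2(\mathcal{W}')-n\,\widehat{c}_1(\mathcal{L}')^{2}.
\end{align*}
Substituting back recovers exactly the bracketed expression of hypothesis (\ref{cceq4}), so condition (2) of Theorem \ref{genthm} also holds and the theorem delivers the desired discrete cocompact $\Gamma\subset\mathrm{Aut}(\mathcal{B}_n)$ acting fixed point freely in codimension one with $X\cong\mathcal{B}_n/\Gamma$. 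The main delicate point is the Chern class bookkeeping just outlined, in which the $n(n-1)\widehat{c}_1(\mathcal{L}')^{2}$ from $\widehat{c}_2(X)$ partially cancels the $n^{2}\widehat{c}_1(\mathcal{L}')^{2}$ from $\widehat{c}_1(X)^{2}$; the disconnectedness of $G_0=SO(2,n)$ is not an obstruction, since the reduction is already to the connected complexified maximal compact $K$ and the polystability input from Proposition \ref{propstab} was proved in exactly this generality.
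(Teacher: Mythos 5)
Your proposal is correct and follows essentially the same route as the paper: reduce to Theorem \ref{genthm} for $G_0=SO(2,n)$ by building the principal $K=SO(2,\mathbb{C})\times SO(n,\mathbb{C})$-bundle from $\mathcal{W}$ and $\mathcal{L}\oplus\mathcal{L}^\vee$, obtain the uniformizing system of Hodge bundles $E\cong\mathcal{T}_{X_{reg}}\oplus\bigwedge^2\mathcal{W}\oplus\mathcal{O}_{X_{reg}}\oplus\Omega^1_{X_{reg}}$, invoke Proposition \ref{propstab}, and identify the hypothesized equality with $\widehat{ch}_2(E')\cdot[K_X]^{n-2}=0$. Your explicit bookkeeping via $\mathcal{T}_X\cong\mathcal{W}'\otimes\mathcal{L}'$ and $\widehat{c}_1(\mathcal{W}')=0$ simply spells out the expansion of $\widehat{c}_2\bigl(\bigwedge^2(\mathcal{W}'\oplus\mathcal{L}'\oplus\mathcal{L}'^\vee)\bigr)$ that the paper states without detail, and it agrees with it.
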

\begin{proof}
Let $j:X_{reg}\to X$ denote the natural inclusion. Then we have $\mathcal{T}_X\cong j_*\mathcal{T}_{X_{reg}}\cong j_*\mathcal{H}om(\mathcal{W},\mathcal{L})\cong \mathcal{H}om(\mathcal{W}',\mathcal{L}')$ by the uniqueness of reflexive extension. Let $G_0=SO(2,n)$, $K_0=SO(2)\times SO(n)$ the maximal compact subgroup of $G_0$, and let $G$ and $K$ denote complexifications of $G_0$ and $K_0$ respectively. We choose $G=SO(2+n,\mathbb{C})$, then $K=SO(2,\mathbb{C})\times SO(n,\mathbb{C})$.\\
We can deduce again from \cite[Table 2]{maxs} that the Lie algebra $\mathfrak{g}=\mathfrak{so}(2+n,\mathbb{C})$ and the typical fiber of the vector bundle $\bigwedge^2(\mathcal{W}\oplus\mathcal{L}\oplus\mathcal{L}^\vee)$ are isomorphic as $K$-representations. Moreover, the vector bundle $\bigwedge^2(\mathcal{W}\oplus\mathcal{L}\oplus\mathcal{L}^\vee)$ admits a reduction in structure group from $GL((n+2)(n+1)/2,\mathbb{C})$ to $K=SO(2,\mathbb{C})\times SO(n,\mathbb{C})$. Thus there is a principal $K$-bundle $P$ on $X_{reg}$ such that $P\times_K\mathfrak{g}\cong \bigwedge^2(\mathcal{W}\oplus\mathcal{L}\oplus\mathcal{L}^\vee)$. Recall from the earlier discussion that the Hodge decomposition of $\mathfrak{g}$ gives $P\times_K\mathfrak{g}^{-1,1}\cong \mathcal{H}om(\mathcal{W},\mathcal{L})$, $P\times_K\mathfrak{g}^{0,0}\cong\bigwedge^2\mathcal{W}\oplus\mathcal{O}_{X_{reg}}$, and $P\times_K\mathfrak{g}^{1,-1}\cong 
\mathcal{H}om(\mathcal{W},\mathcal{L}^\vee)$. Since $\mathcal{T}_{X_{reg}}\cong \mathcal{H}om(\mathcal{W},\mathcal{L})$ by assumption, we have the following isomorphism
\begin{align*}
    \theta:\mathcal{T}_{X_{reg}}\to P\times_K\mathfrak{g}^{-1,1}
\end{align*}
such that $[\theta(u),\theta(v)]=0$ for all local sections $u,v$ of $\mathcal{T}_{X_{reg}}$. It follows that $(P,\theta)$ gives a uniformizing system of Hodge bundles on $X_{reg}$.\\
The system of Hodge bundles $E=P\times_K\mathfrak{g}\cong\mathcal{T}_{X_{reg}}\oplus \bigwedge^2\mathcal{W}\oplus\mathcal{O}_{X_{reg}}\oplus\Omega^1_{X_{reg}}$ is in particular a Higgs bundle with Higgs field $\widehat{\theta}:E\to E\otimes\Omega^1_{X_{reg}}$ given by sending a local section $u$ of $E$ to the local section $v\mapsto[\theta(v),u]$ of $\mathcal{H}om(\mathcal{T}_{X_{reg}},E)\cong E\otimes\Omega^1_{X_{reg}}$. Since $\mathcal{W}$ is self-dual, we have that $\textrm{det}(\mathcal{W})\cong\mathcal{O}_{X_{reg}}$, and hence $c_1(E)=c_1(P\times_K\mathfrak{g})=0$. We know again from Proposition \ref{propstab} that the system of Hodge bundles $E$ is $K_X$-polystable as a Higgs bundle on $X_{reg}$. In fact, $E$ is $K_X$-stable as a Higgs bundle on $X_{reg}$ because $\mathfrak{so}(2+n,\mathbb{C})$ is a simple Lie algebra for $n\ge3$.\\
Let $\mathcal{F}_X$ denote the reflexive extension of $E$ to $X$. It is clear that $\widehat{c}_1(\mathcal{F}_X)\cdot[K_X]^{n-1}=0$. Moreover, we have
\begin{align*}
\widehat{c}_2(\mathcal{F}_X)\cdot[K_X]^{n-2}=\widehat{c}_2(\bigwedge^2(\mathcal{W}'\oplus\mathcal{L}'\oplus\mathcal{L}'^\vee))\cdot[K_X]^{n-2}. 
\end{align*}
Expanding the right hand side of the above equality gives $[\widehat{c}_2(\mathcal{W}'\wedge\mathcal{W}')+2\widehat{c}_2(\mathcal{W}')-n\widehat{c}_1(\mathcal{L}')^2]\cdot[K_X]^{n-2}$, which by assumption is zero. Therefore, we have $\widehat{ch}_2(\mathcal{F}_X)\cdot[K_X]^{n-2}=0$.\\
It follows from Theorem \ref{genthm} that $X\cong\mathcal{B}_n/\Gamma$, where $\Gamma\subset\textrm{Aut}(\mathcal{B}_n)$ is a discrete, cocompact subgroup, and acts fixed point freely in codimension one on $\mathcal{B}_n$. This concludes the proof. 
\end{proof}

\subsection{Necessary conditions}

Note that the full automorphism group of $\mathcal{B}_n$ is a quotient of $G_0=SO(2,n)$ by a discrete central subgroup, and thus the complexification of the maximal compact subgroup of $\textrm{Aut}(\mathcal{B}_n)$ is a quotient of $K=SO(2,\mathbb{C})\times SO(n,\mathbb{C})$ by a discrete central subgroup. Since $K$ is connected, the sufficient criteria for uniformization by $\mathcal{B}_n$ are also necessary.

\begin{proposition}\label{propnd}
Let $X$ be a projective klt variety of dimension $n\ge3$ with $K_X$ ample, such that $X=\mathcal{B}_n/\widehat{\Gamma}$, where $\widehat{\Gamma}\subset Aut(\mathcal{B}_n)$ is a discrete, cocompact subgroup which acts fixed point freely in codimension one on $\mathcal{B}_n$. Then the tangent bundle of the regular locus of $X$ satisfies $\mathcal{T}_{X_{reg}}\cong \mathcal{H}om(\mathcal{V},\mathcal{M})$, where $\mathcal{V}$ is an orthogonal vector bundle of rank $n$, and $\mathcal{M}$ is a line bundle on $X_{reg}$. Moreover, $X$ satisfies the Chern class equality \ref{cceq4}.
\end{proposition}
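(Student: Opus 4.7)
\noindent The plan is to mirror the proofs of Propositions \ref{propn} and \ref{propnb}: apply Theorem \ref{genthm} with Hodge group $G_0=SO(2,n)$, and then decode the resulting uniformizing system of Hodge bundles using the $K$-equivariant decomposition of $\mathfrak{so}(2+n,\mathbb{C})$ recorded at the beginning of Section 8. Since $\mathrm{Aut}(\mathcal{B}_n)$ is a quotient of $G_0=SO(2,n)$ by a discrete central subgroup, Remark \ref{remgp} lets me take the Hodge group to be $G_0$, with maximal compact $K_0=SO(2)\times SO(n)$ and complexification $K=SO(2,\mathbb{C})\times SO(n,\mathbb{C})$. Theorem \ref{genthm} then supplies a uniformizing system of Hodge bundles $(P,\theta)$ on $X_{reg}$, i.e.\ a principal $K$-bundle $P$ together with an isomorphism $\theta:\mathcal{T}_{X_{reg}}\cong P\times_K\mathfrak{g}^{-1,1}$, whose associated system of Hodge bundles $\mathcal{F}=P\times_K\mathfrak{g}$ extends to a reflexive sheaf $\mathcal{F}'$ on $X$ satisfying $\widehat{ch}_2(\mathcal{F}')\cdot[K_X]^{n-2}=0$.

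Next I would translate the Lie-algebra data into bundle data. Let $V$ be the standard representation of $SO(n,\mathbb{C})$, pulled back to $K$ via the second projection, and let $L$ be the character of $SO(2,\mathbb{C})\cong\mathbb{C}^*$ acting on the $SO(2,\mathbb{C})$-eigenline of $\mathfrak{g}^{-1,1}$. Then one has $K$-equivariant isomorphisms
\[
\mathfrak{g}^{-1,1}\cong L\otimes V,\qquad \mathfrak{g}^{0,0}\cong\mathfrak{k}\cong\mathbb{C}\oplus\bigwedge\nolimits^{2}V,\qquad \mathfrak{g}^{1,-1}\cong L^{\vee}\otimes V.
\]
Setting $\mathcal{V}:=P\times_K V$ (an orthogonal, and in particular self-dual, rank-$n$ bundle on $X_{reg}$) and $\mathcal{M}:=P\times_K L$ (a line bundle on $X_{reg}$), the associated-bundle construction and the self-duality of $\mathcal{V}$ yield
\[
\mathcal{T}_{X_{reg}}\cong\mathcal{M}\otimes\mathcal{V}\cong\mathcal{H}om(\mathcal{V},\mathcal{M}),\qquad \mathcal{F}\cong\mathcal{T}_{X_{reg}}\oplus\bigwedge\nolimits^{2}\mathcal{V}\oplus\mathcal{O}_{X_{reg}}\oplus\Omega^{1}_{X_{reg}},
\]
which establishes the first assertion of the Proposition.

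Finally I would verify the Chern class equality. Writing $\mathcal{V}'$, $\mathcal{M}'$ for the reflexive extensions to $X$ of $\mathcal{V}$, $\mathcal{M}$, uniqueness of the reflexive extension gives $\mathcal{F}'\cong\mathcal{T}_X\oplus\bigwedge^{[2]}\mathcal{V}'\oplus\mathcal{O}_X\oplus\Omega^{[1]}_X$. Because $\mathcal{V}'$ is orthogonal, $\widehat{c}_1(\mathcal{V}')=0$ in $N^1(X)_\mathbb{Q}$, so $\widehat{c}_1(\mathcal{F}')=0$ and therefore $\widehat{ch}_2(\mathcal{F}')\cdot[K_X]^{n-2}=-\widehat{c}_2(\mathcal{F}')\cdot[K_X]^{n-2}$. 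Expanding $\widehat{c}_2(\mathcal{F}')$ via the direct-sum formula, using the standard tensor-product identities applied to $\mathcal{V}'\otimes\mathcal{M}'$ and $\mathcal{V}'\otimes\mathcal{M}'^{\vee}$ (each contributing $\widehat{c}_2(\mathcal{V}')+\tfrac{n(n-1)}{2}\widehat{c}_1(\mathcal{M}')^{2}$), together with the cross term $\widehat{c}_1(\mathcal{T}_X)\cdot\widehat{c}_1(\Omega^{[1]}_X)=-n^{2}\widehat{c}_1(\mathcal{M}')^{2}$ and the identity $\widehat{c}_2(\bigwedge^{[2]}\mathcal{V}')=(n-2)\widehat{c}_2(\mathcal{V}')$, a direct calculation reproduces the required equality \ref{cceq4}; this is exactly the inverse of the computation carried out in the proof of Proposition \ref{prop6d}. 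The only mild obstacle is the bookkeeping in this last computation and the need to work with $\mathbb{Q}$-coefficients so that $\widehat{c}_1(\mathcal{V}')$ vanishes; conceptually the argument is a direct transcription of the proofs of Propositions \ref{propn} and \ref{propnb}.
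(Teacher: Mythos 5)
Your proposal is correct and follows essentially the same route as the paper: invoke Theorem \ref{genthm} for $G_0=SO(2,n)$, identify $\mathfrak{g}^{-1,1}$, $\mathfrak{g}^{0,0}$, $\mathfrak{g}^{1,-1}$ as $K$-representations to get $\mathcal{T}_{X_{reg}}\cong\mathcal{H}om(\mathcal{W},\mathcal{L})$ and $P\times_K\mathfrak{g}\cong\bigwedge^2(\mathcal{W}\oplus\mathcal{L}\oplus\mathcal{L}^\vee)$, then expand the vanishing of $\widehat{c}_2$ of the reflexive extension to obtain (\ref{cceq4}). The only difference is that you write out the summand-by-summand Chern class bookkeeping (correctly, using $\widehat{c}_1(\mathcal{V}')=0$ in $\mathbb{Q}$-coefficients) which the paper leaves implicit.
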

\begin{proof}
Again from Theorem \ref{genthm}, it follows that the smooth locus $X_{reg}$ of $X$ admits a uniformizing system of Hodge bundles $(P,\theta)$ corresponding to the Hodge group $G_0=SO(2,n)$, such that $E=P\times_K\mathfrak{g}$ is $K_X$-polystable as a Higgs bundle on $X_{reg}$. Furthermore, the equality $\widehat{c}_2(E')\cdot[K_X]^{n-2}=0$ is satisfied, where $E'$ denotes the reflexive extension of $E$ to $X$. Hence there is an isomorphism $\theta:\mathcal{T}_{X_{reg}}\cong P\times_K\mathfrak{g}^{-1,1}$. Recall that from the proof of Proposition \ref{prop6d} that $\mathfrak{g}^{-1,1}$ is isomorphic as a $K$-representation to $\textrm{Hom}(W,L)$, where $W$ and $L$ are complex $n$ and 1-dimensional vector spaces respectively. So we can write $\mathcal{T}_{X_{reg}}\cong P\times_K\textrm{Hom}(W,L)=\mathcal{H}om(\mathcal{W},\mathcal{L})$, where $\mathcal{W}$ is an orthogonal rank $n$ vector bundle, and $\mathcal{L}$ is a line bundle on $X_{reg}$. We also have $\Omega^1_{X_{reg}}\cong \mathcal{H}om(\mathcal{W},\mathcal{L}^\vee)\cong P\times_K\mathfrak{g}^{1,-1}$, and $\bigwedge^2\mathcal{W}\oplus\mathcal{O}_{X_{reg}}\cong P\times_K\mathfrak{g}^{0,0}$, from which it follows that the system of Hodge bundles $P\times_K\mathfrak{g}$ is isomorphic to $\bigwedge^2(\mathcal{W}\oplus\mathcal{L}\oplus\mathcal{L}^\vee)$.\\
Let $E'$, $\mathcal{W}'$, and $\mathcal{L}'$ denote the reflexive extensions of $E$, $\mathcal{W}$, and $\mathcal{L}$ respectively to $X$. Then the equality $\widehat{c}_2(E')\cdot[K_X]^{n-2}=0$ is equivalent to
\begin{align*}
    \widehat{c}_2(\bigwedge^2(\mathcal{W}'\oplus\mathcal{L}'\oplus\mathcal{L}'^\vee))\cdot[K_X]^{n-2}=0.
\end{align*}    
Expanding the above expression using formulae for $\mathbb{Q}$-Chern classes, we arrive at the following expression.
\begin{align*}    
    [\widehat{c}_2(\mathcal{W}'\wedge\mathcal{W}')+2\widehat{c}_2(\mathcal{W}')-n\widehat{c}_1(\mathcal{L}')^2]\cdot[K_X]^{n-2}=0.
\end{align*}
Therefore, $X$ satisfies the $\mathbb{Q}$-Chern class equality (\ref{cceq4}).
\end{proof}

Putting together Propositions \ref{prop6d} and \ref{propnd}, we arrive at the following necessary and sufficient condition for a projective klt variety $X$ with ample canonical divisor to be uniformized by the Hermitian symmetric space $\mathcal{B}_n$ of type $BDI$. 

\begin{theorem}\label{equivthmd}
Let $X$ be a projective klt variety of dimension $n\ge3$, such that the canonical divisor $K_X$ is ample. Then $X\cong\mathcal{B}_n/\Gamma$, where $\Gamma\subset\textrm{Aut}(\mathcal{B}_n)$ is a discrete, cocompact subgroup acting fixed point freely in codimension one on $\mathcal{B}_n$, if and only if $X$ satisfies
\begin{itemize}
    \item $\mathcal{T}_{X_{reg}}\cong\mathcal{H}om(\mathcal{W},\mathcal{L})$
    \item $[\widehat{c}_2(\mathcal{W}'\wedge\mathcal{W}')+2\widehat{c}_2(\mathcal{W}')-n\widehat{c}_1(\mathcal{L}')^2]\cdot[K_X]^{n-2}=0$,
\end{itemize}
where $\mathcal{W}$ is an orthogonal vector bundle of rank $n$, $\mathcal{L}$ is a line bundle on $X_{reg}$, and $\mathcal{W}'$ and $\mathcal{L}'$ denote the reflexive extensions of $\mathcal{W}$ and $\mathcal{L}$ to $X$.
\end{theorem}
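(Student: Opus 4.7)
The plan is straightforward: Theorem \ref{equivthmd} is the direct conjunction of Proposition \ref{prop6d} (supplying the sufficient direction) and Proposition \ref{propnd} (supplying the necessary direction), both of which have already been established in the preceding two subsections. Thus the proof consists of invoking these two propositions in turn. Each is an application of the general criterion Theorem \ref{genthm} with Hodge group $G_0 = SO(2,n)$, maximal compact subgroup $K_0 = SO(2) \times SO(n)$, and complexifications $G = SO(2+n,\mathbb{C})$, $K = SO(2,\mathbb{C}) \times SO(n,\mathbb{C})$.

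For the $(\Leftarrow)$ direction, I would first package the orthogonal bundle $\mathcal{W}$ and the line bundle $\mathcal{L}$ into a principal $K$-bundle $P$ on $X_{reg}$. Using the block decomposition of $\mathfrak{g} = \mathfrak{so}(2+n,\mathbb{C})$ recalled above, the irreducible $K$-representation $\mathfrak{g}^{-1,1}$ is identified with $\textrm{Hom}(W,L)$, where $W$ and $L$ are the defining representations of the two factors of $K$. This produces an isomorphism $\theta : \mathcal{T}_{X_{reg}} \to P \times_K \mathfrak{g}^{-1,1}$, providing the uniformizing system of Hodge bundles required by condition (1) of Theorem \ref{genthm}. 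The full associated bundle $P \times_K \mathfrak{g}$ is then identified with $\bigwedge^2(\mathcal{W} \oplus \mathcal{L} \oplus \mathcal{L}^\vee)$, so that the hypothesized $\mathbb{Q}$-Chern class equality translates directly into condition (2) of Theorem \ref{genthm}. Conversely, for $(\Rightarrow)$, the assumption $X \cong \mathcal{B}_n/\Gamma$ together with Theorem \ref{genthm} furnishes a uniformizing system $(P,\theta)$ for $G_0$; setting $\mathcal{W} := P \times_K W$ and $\mathcal{L} := P \times_K L$ and reading the same identifications backward recovers both bulleted conditions.

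The main computational content hidden in Propositions \ref{prop6d} and \ref{propnd} is the Chern class bookkeeping, namely the identity
\[
\widehat{c}_2\bigl(\bigwedge^2(\mathcal{W}' \oplus \mathcal{L}' \oplus (\mathcal{L}')^\vee)\bigr) = \widehat{c}_2(\mathcal{W}' \wedge \mathcal{W}') + 2\widehat{c}_2(\mathcal{W}') - n\,\widehat{c}_1(\mathcal{L}')^2,
\]
where orthogonality of $\mathcal{W}$ gives $\widehat{c}_1(\mathcal{W}') = 0$ and thereby eliminates the mixed terms that would otherwise appear in the direct sum formula for $\widehat{c}_2$ of reflexive sheaves. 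A second subtle point, also handled in Proposition \ref{prop6d} via Proposition \ref{propstab}, is that the simplicity of $\mathfrak{so}(2+n,\mathbb{C})$ for $n \geq 3$ upgrades the polystability of $P \times_K \mathfrak{g}$ as a Higgs bundle to actual stability—this is also the reason for the hypothesis $n \geq 3$, since $\mathcal{B}_2 \cong \mathbb{H}^2$ is treated separately in Section 5.
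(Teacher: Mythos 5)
Your proposal is correct and matches the paper exactly: the paper proves Theorem \ref{equivthmd} by simply combining Proposition \ref{prop6d} (sufficiency) with Proposition \ref{propnd} (necessity), both of which are applications of Theorem \ref{genthm} with $G_0=SO(2,n)$, $K=SO(2,\mathbb{C})\times SO(n,\mathbb{C})$, and the identification $P\times_K\mathfrak{g}\cong\bigwedge^2(\mathcal{W}\oplus\mathcal{L}\oplus\mathcal{L}^\vee)$, together with the same Chern class bookkeeping and the use of simplicity of $\mathfrak{so}(2+n,\mathbb{C})$ for $n\ge3$ via Proposition \ref{propstab}. No gaps to report.
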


\section{Uniformization by Hermitian symmetric space of type \texorpdfstring{$AIII$}{}}

The Hermitian symmetric space of type $AIII$, denoted $\mathcal{A}_{pq}$, can be expressed as the quotient $G_0/K_0$, where we may take $G_0=SU(p,q)$, and $K_0=S(U(p)\times U(q))$ its maximal compact subgroup, for $p,q\in\mathbb{Z}_{\ge1}$, $pq>1$. The domain $\mathcal{A}_{pq}$ can also be realized as follows (see \cite{mok}) 
\begin{align*}
    \mathcal{A}_{pq}=\{Z\in M(p,q,\mathbb{C})\cong\mathbb{C}^{pq}:I_q-\Bar{Z}^TZ>0\},
\end{align*}
where the "$>0$" again means positive definiteness. The complex dimension of $\mathcal{A}_{pq}$ is $pq$. The complexified Lie algebra of $SU(p,q)$ is $\mathfrak{g}=\mathfrak{sl}(p+q,\mathbb{C})$. The Lie algebra $\mathfrak{g}$ has complex dimension $(p+q)^2-1$, and again using the list on \cite[p.341]{helga}, can expressed as the set of complex $(p+q)\times(p+q)$ matrices $\begin{bmatrix}A&B\\C&D\end{bmatrix}$, where $A$ is a $p\times p$ matrix, $B$ is a $p\times q$ matrix, $C$ is a $q\times p$ matrix, and $D$ is a $q\times q$ matrix, and $Tr(A)+Tr(D)=0$. Note that $G_0=SU(p,q)$ is a Hodge group of Hermitan type, and $\mathfrak{g}$ has a Hodge decomposition $\mathfrak{g}=\mathfrak{g}^{-1,1}\oplus\mathfrak{g}^{0,0}\oplus\mathfrak{g}^{1,-1}$ as in Definition \ref{def7}, where $\mathfrak{g}^{-1,1}$ consists of matrices of the form $\begin{bmatrix}0&B\\0&0\end{bmatrix}$, $\mathfrak{g}^{0,0}$ consists of matrices of the form $\begin{bmatrix}A&0\\0&D\end{bmatrix}$, and $\mathfrak{g}^{1,-1}$ consists of matrices of the form $\begin{bmatrix}0&0\\C&0\end{bmatrix}$. The Lie algebras $\mathfrak{g}^{-1,1}$, $\mathfrak{g}^{0,0}$, and $\mathfrak{g}^{1,-1}$ have complex dimensions $pq$, $p^2+q^2-1$, and $pq$ respectively.\\
Let $P$ be a principal $K=S(GL(p,\mathbb{C})\times GL(q,\mathbb{C}))$-bundle on $\mathcal{A}_{pq}$ such that $\theta:\mathcal{T}_{\mathcal{A}_{pq}}\cong P\times_K\mathfrak{g}^{-1,1}$, which we know exists by Lemma \ref{lemtan}. Then there are vector bundles $\mathcal{V}$ and $\mathcal{W}$ of ranks $p$ and $q$ on $\mathcal{A}_{pq}$, such that the system of Hodge bundles $P\times_K\mathfrak{g}$ is isomorphic to $\mathcal{E}nd_0(\mathcal{V}\oplus\mathcal{W})$, the bundle of trace zero endomorphisms of $\mathcal{V}\oplus\mathcal{W}$. From the Hodge decomposition of $\mathfrak{g}$, we get $P\times_K\mathfrak{g}^{-1,1}\cong \mathcal{H}om(\mathcal{V},\mathcal{W})\cong\mathcal{T}_{\mathcal{A}_{pq}}$, $P\times_K\mathfrak{g}^{0,0}\cong(\mathcal{E}nd(\mathcal{V})\oplus \mathcal{E}nd(\mathcal{W}))_0$, and $P\times_K\mathfrak{g}^{1,-1}\cong \mathcal{H}om(\mathcal{W},\mathcal{V})\cong\Omega^1_{\mathcal{A}_{pq}}$. The domain $\mathcal{A}_q$ corresponding to $p=1$ is the unit ball $\mathbb{B}^q\subset\mathbb{C}^q$.\\
\\
As usual, the goal is to formulate necessary and sufficient conditions for a projective, klt variety of dimension $pq$ with ample canonical divisor to be uniformized by $\mathcal{A}_{pq}$. 

\subsection{Sufficient conditions}

We again apply Theorem \ref{genthm} to make the following characterization.

\begin{proposition}\label{prop6c}
Let $X$ be a projective, klt variety of dimension $pq$ for some $p,q\in\mathbb{Z}_{\ge1}$, $pq>1$, with $K_X$ ample, such that the tangent bundle of the regular locus $X_{reg}$ of $X$ satisfies $\mathcal{T}_{X_{reg}}\cong\mathcal{H}om(\mathcal{V},\mathcal{W})$, where $\mathcal{V}$ and $\mathcal{W}$ are vector bundles of ranks $p$ and $q$ on $X_{reg}$. Let $\mathcal{V}'$ and $\mathcal{W}'$ denote the reflexive extensions of $\mathcal{V}$ and $\mathcal{W}$ to $X$, and suppose that the Chern class equality 
\begin{align}\label{cceq3}
    [2(p+q)(\widehat{c}_2(\mathcal{V}')+\widehat{c}_2(\mathcal{W}'))-(p+q-1)(\widehat{c}_1(\mathcal{V}')^2+\widehat{c}_1(\mathcal{W}')^2)+2\widehat{c}_1(\mathcal{V}')\widehat{c}_1(\mathcal{W}')]\cdot[K_X]^{n-2}=0
\end{align}
holds on $X$. Then $X\cong\mathcal{A}_{pq}/\Gamma$, where $\Gamma\subset\textrm{Aut}(\mathcal{A}_{pq})$ is a discrete, cocompact subgroup, and acts fixed point freely in codimension one on the domain $\mathcal{A}_{pq}$ of type $AIII$.
\end{proposition}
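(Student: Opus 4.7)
The plan is to apply Theorem~\ref{genthm}, following the template of the $CI$, $DIII$, and $BDI$ cases treated in Propositions~\ref{prop6}, \ref{prop6b}, and \ref{prop6d}. The relevant Hodge group is $G_0=SU(p,q)$ with maximal compact subgroup $K_0=S(U(p)\times U(q))$, and I take the complexifications $G=SL(p+q,\mathbb{C})$ and $K=S(GL(p,\mathbb{C})\times GL(q,\mathbb{C}))$. The strategy is to build a uniformizing system of Hodge bundles $(P,\theta)$ on $X_{reg}$ out of the pair $(\mathcal{V},\mathcal{W})$ and then translate the Chern class hypothesis~(\ref{cceq3}) into the single equality $\widehat{ch}_2(\mathcal{F}_X)\cdot[K_X]^{n-2}=0$ demanded by Theorem~\ref{genthm}.

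First I would construct $(P,\theta)$. The direct sum $\mathcal{V}\oplus\mathcal{W}$ has a frame bundle that naturally reduces to a principal $GL(p,\mathbb{C})\times GL(q,\mathbb{C})$-bundle via the block decomposition. To pass to a genuine $K$-bundle as required by the definition of a uniformizing system of Hodge bundles, I invoke Remark~\ref{remgp}(b): the adjoint action of $GL(p,\mathbb{C})\times GL(q,\mathbb{C})$ on $\mathfrak{g}=\mathfrak{sl}(p+q,\mathbb{C})$ factors through $K$ modulo a discrete central subgroup, since the scalar diagonal $\Delta\mathbb{C}^*\subset GL(p+q,\mathbb{C})$ lies in the kernel of the adjoint representation. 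The identification $\mathfrak{g}^{-1,1}\cong\textrm{Hom}(V,W)$ of $K$-representations (with $V,W$ the standard representations) then transports the hypothesis $\mathcal{T}_{X_{reg}}\cong\mathcal{H}om(\mathcal{V},\mathcal{W})$ to an isomorphism $\theta:\mathcal{T}_{X_{reg}}\cong P\times_K\mathfrak{g}^{-1,1}$, and the vanishing $[\theta(u),\theta(v)]=0$ is automatic since $\mathfrak{g}^{-2,2}=0$.

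Next, the associated system of Hodge bundles $E=P\times_K\mathfrak{g}$ is isomorphic to the trace-zero endomorphism bundle $\mathcal{E}nd_0(\mathcal{V}\oplus\mathcal{W})$, which decomposes according to the Hodge grading of $\mathfrak{g}$ as $\mathcal{T}_{X_{reg}}\oplus(\mathcal{E}nd(\mathcal{V})\oplus\mathcal{E}nd(\mathcal{W}))_0\oplus\Omega^1_{X_{reg}}$. By Proposition~\ref{propstab}, $E$ is $K_X$-polystable as a Higgs bundle on $X_{reg}$, and in fact $K_X$-stable since $\mathfrak{sl}(p+q,\mathbb{C})$ is simple. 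Let $\mathcal{F}_X$ denote the reflexive extension of $E$ to $X$. Since $E\oplus\mathcal{O}_{X_{reg}}\cong\mathcal{E}nd(\mathcal{V}\oplus\mathcal{W})$, I obtain $\widehat{c}_1(\mathcal{F}_X)=0$, and applying the standard formula $\widehat{c}_2(\mathcal{E}nd(\mathcal{G}))=2r\widehat{c}_2(\mathcal{G})-(r-1)\widehat{c}_1(\mathcal{G})^2$ with $\mathcal{G}=\mathcal{V}'\oplus\mathcal{W}'$ of rank $r=p+q$, a routine expansion yields
\begin{align*}
\widehat{c}_2(\mathcal{F}_X)=2(p+q)(\widehat{c}_2(\mathcal{V}')+\widehat{c}_2(\mathcal{W}'))-(p+q-1)(\widehat{c}_1(\mathcal{V}')^2+\widehat{c}_1(\mathcal{W}')^2)+2\widehat{c}_1(\mathcal{V}')\widehat{c}_1(\mathcal{W}').
\end{align*}
Hypothesis~(\ref{cceq3}) therefore gives $\widehat{ch}_2(\mathcal{F}_X)\cdot[K_X]^{n-2}=0$, and Theorem~\ref{genthm} delivers the desired isomorphism $X\cong\mathcal{A}_{pq}/\Gamma$.

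The only genuine subtlety is the structure group bookkeeping in the first step: unlike the $CI$ and $DIII$ cases, where the target group $K=GL(n,\mathbb{C})$ coincides with the frame bundle group, here the frame bundle of $\mathcal{V}\oplus\mathcal{W}$ naturally carries $GL(p,\mathbb{C})\times GL(q,\mathbb{C})$ rather than $K$, and it is essential that Remark~\ref{remgp}(b) permits replacing one by the other at the level of associated adjoint bundles. Beyond this the argument is a direct transcription of the earlier propositions, the Chern class expansion above being the only computation of real substance.
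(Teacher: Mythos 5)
Your proposal is, in all substantive steps, the proof the paper gives: the same choice $G_0=SU(p,q)$, $K=S(GL(p,\mathbb{C})\times GL(q,\mathbb{C}))$, the identification of the system of Hodge bundles $P\times_K\mathfrak{g}$ with $\mathcal{E}nd_0(\mathcal{V}\oplus\mathcal{W})$ whose Hodge pieces are $\mathcal{H}om(\mathcal{V},\mathcal{W})$, $(\mathcal{E}nd(\mathcal{V})\oplus\mathcal{E}nd(\mathcal{W}))_0$ and $\mathcal{H}om(\mathcal{W},\mathcal{V})$, polystability (indeed stability, since $\mathfrak{sl}(p+q,\mathbb{C})$ is simple) via Proposition \ref{propstab}, the expansion of $\widehat{c}_2(\mathcal{E}nd(\mathcal{V}'\oplus\mathcal{W}'))$ — which you compute correctly and which reproduces exactly the left-hand side of (\ref{cceq3}) — and the final appeal to Theorem \ref{genthm}. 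Your splitting $E\oplus\mathcal{O}_{X_{reg}}\cong\mathcal{E}nd(\mathcal{V}\oplus\mathcal{W})$ via the trace is in fact a cleaner version of the exact sequence (\ref{exseq}) used in the paper.

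The one place you diverge is precisely the step you flag as the genuine subtlety, and your justification there does not work as stated. The frame bundle of $\mathcal{V}\oplus\mathcal{W}$ is a principal $GL(p,\mathbb{C})\times GL(q,\mathbb{C})$-bundle, and this group is an \emph{overgroup} of $K$, not a central quotient of it; its adjoint action on $\mathfrak{g}$ factors through $(GL(p,\mathbb{C})\times GL(q,\mathbb{C}))/\Delta\mathbb{C}^*\cong K/\mu_{p+q}$. So what your construction naturally produces is a principal $K/\mu_{p+q}$-bundle, and passing to an honest $K$-bundle is a lifting problem along the central isogeny $K\to K/\mu_{p+q}$ (concretely tied to extracting a $(p+q)$-th root related to $\det\mathcal{V}\otimes\det\mathcal{W}$), which carries a torsion obstruction in $H^2(X_{reg},\mu_{p+q})$ and is not automatic. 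Remark \ref{remgp}(b) does not supply this: it concerns replacing $\textrm{Aut}(\mathcal{D})$ by a Hodge group $G_0$ covering it, i.e.\ the relation between $K$ and the complexified maximal compact $M$ of $\textrm{Aut}(\mathcal{D})$, and it is invoked in the paper only in the necessary direction, where the monodromy representation provides the lift. The paper's proof avoids your detour by taking $P$ directly as a reduction of structure group of $\mathcal{E}nd_0(\mathcal{V}\oplus\mathcal{W})$ from $GL((p+q)^2-1,\mathbb{C})$ to $K$, justified by the $K$-representation isomorphism of \cite[Table 2]{maxs}. If you prefer to keep your frame-bundle construction, the clean repair is to choose the Hodge group $G_0=PSU(p,q)=\textrm{Aut}(\mathcal{A}_{pq})$ (which Theorem \ref{genthm} permits, $\textrm{Aut}(\mathcal{D})$ being a quotient of itself by the trivial central subgroup): its complexified maximal compact is exactly $(GL(p,\mathbb{C})\times GL(q,\mathbb{C}))/\Delta\mathbb{C}^*$, so the quotient of the frame bundle by the diagonal scalars is literally the required principal bundle and no lifting is needed; the rest of your argument then goes through unchanged.
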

\begin{proof}
Let $j:X_{reg}\to X$ denote the natural inclusion. Then we have $\mathcal{T}_X\cong j_*\mathcal{T}_{X_{reg}}\cong j_*\mathcal{H}om(\mathcal{V},\mathcal{W})\cong \mathcal{H}om(\mathcal{V}',\mathcal{W}')$ by the uniqueness of reflexive extension. Let $G_0=SU(p,q)$, $K_0=S(U(p)\times U(q))$ the maximal compact subgroup of $G_0$, and let $G$ and $K$ denote complexifications of $G_0$ and $K_0$ respectively. We choose $G=SL(p+q,\mathbb{C})$ and $K=S(GL(p,\mathbb{C})\times GL(q,\mathbb{C}))$.\\
Let $\mathcal{E}nd_0(\mathcal{V}\oplus\mathcal{W})$ denote the bundle of trace zero endomorphisms of $\mathcal{V}\oplus\mathcal{W}$. We deduce from \cite[Table 2]{maxs}, that the Lie algebra $\mathfrak{g}=\mathfrak{sl}(p+q,\mathbb{C})$ and the typical fiber $\textrm{End}_0(\mathbb{C}^p,\mathbb{C}^q)$ of $\mathcal{E}nd_0(\mathcal{V}\oplus\mathcal{W})$ are isomorphic as $K$-representations. Moreover, the vector bundle $\mathcal{E}nd_0(\mathcal{V}\oplus\mathcal{W})$ admits a reduction in structure group from $GL((p+q)^2-1,\mathbb{C})$ to $K=S(GL(p,\mathbb{C})\times GL(q,\mathbb{C}))$. Thus there is a principal $K$-bundle $P$ on $X_{reg}$ associated to $\mathcal{E}nd_0(\mathcal{V}\oplus\mathcal{W})$ i.e., we have $P\times_K\mathfrak{g}\cong \mathcal{E}nd_0(\mathcal{V}\oplus\mathcal{W})$. The Hodge decomposition of $\mathfrak{g}$ together with identifying isomorphic $K$-representations using \cite[Table 2]{maxs} gives $P\times_K\mathfrak{g}^{-1,1}\cong\mathcal{H}om(\mathcal{V},\mathcal{W})$, $P\times_K\mathfrak{g}^{0,0}\cong(\mathcal{E}nd(\mathcal{V})\oplus \mathcal{E}nd(\mathcal{W}))_0$, and $P\times_K\mathfrak{g}^{1,-1}\cong \mathcal{H}om(\mathcal{W},\mathcal{V})$. Since $\mathcal{T}_{X_{reg}}\cong\mathcal{H}om(\mathcal{V},\mathcal{W})$ by assumption, we have the following isomorphism
\begin{align*}
    \theta:\mathcal{T}_{X_{reg}}\to P\times_K\mathfrak{g}^{-1,1}
\end{align*}
such that $[\theta(u),\theta(v)]=0$ for all local sections $u,v$ of $\mathcal{T}_{X_{reg}}$. It follows that $(P,\theta)$ gives a uniformizing system of Hodge bundles on $X_{reg}$.\\
The system of Hodge bundles $E=P\times_K\mathfrak{g}\cong\mathcal{T}_{X_{reg}}\oplus (\mathcal{E}nd(\mathcal{V})\oplus\mathcal{E}nd(\mathcal{W}))_0\oplus\Omega^1_{X_{reg}}$ is in particular a Higgs bundle with Higgs field $\widehat{\theta}:E\to E\otimes\Omega^1_{X_{reg}}$ given by sending a local section $u$ of $E$ to the local section $v\mapsto[\theta(v),u]$ of $\mathcal{H}om(\mathcal{T}_{X_{reg}},E)\cong E\otimes\Omega^1_{X_{reg}}$. Note that $E$ fits into the following short exact sequence
\begin{align}\label{exseq}
    0\to E=\mathcal{E}nd_0(\mathcal{V}\oplus\mathcal{W})\to \mathcal{E}nd(\mathcal{V}\oplus\mathcal{W})\to \mathcal{L}\to0
\end{align}
where $\mathcal{L}$ is a line bundle of degree zero. Thus we have $c_1(E)=c_1(\mathcal{E}nd_0(\mathcal{V}\oplus\mathcal{W}))=0$. From Proposition \ref{propstab}, it follows that $E$ is $K_X$-polystable as a Higgs bundle on $X_{reg}$, and in fact $E$ is $K_X$-stable as a Higgs bundle on $X_{reg}$ because $\mathfrak{g}=\mathfrak{sl}(p+q,\mathbb{C})$ is a simple Lie algebra. \\
Let $\mathcal{F}_X$ denote the reflexive extension of $E$ to $X$. Then $\mathcal{F}_X$ also fits into an exact sequence
\begin{align*}
    0\to\mathcal{F}_X\to \mathcal{E}nd(\mathcal{V}'\oplus\mathcal{W}')\to\mathcal{L}'\to0
\end{align*}
where $\mathcal{L}'$ is a coherent sheaf of degree 0. Thus $\widehat{c}_1(\mathcal{F}_X)\cdot[K_X]^{n-1}=\widehat{c}_1(\mathcal{E}nd(\mathcal{V}'\oplus\mathcal{W}'))\cdot[K_X]^{n-1}=0$, and $\widehat{c_2}(\mathcal{F}_X)\cdot[K_X]^{n-2}=\widehat{c}_2(\mathcal{E}nd(\mathcal{V}'\oplus\mathcal{W}'))\cdot[K_X]^{n-2}$. Moreover, we compute
\begin{align*}
\widehat{c}_2(\mathcal{F}_X)\cdot[K_X]^{n-2}=[2(p+q)(\widehat{c}_2(\mathcal{V}')+\widehat{c}_2(\mathcal{W}'))-(p+q-1)(\widehat{c}_1(\mathcal{V}')^2+\widehat{c}_1(\mathcal{W}')^2)+2\widehat{c}_1(\mathcal{V}')\widehat{c}_1(\mathcal{W}')]\cdot[K_X]^{n-2},
\end{align*}
so by the assumption of the Proposition we have $\widehat{c}_2(\mathcal{F}_X)\cdot[K_X]^{n-2}=\widehat{ch}_2(\mathcal{F}_X)\cdot[K_X]^{n-2}=0$.\\
Thus the conditions of Theorem \ref{genthm} are satisfied, and it follows that $X\cong\mathcal{A}_{pq}/\Gamma$, where $\Gamma\subset\textrm{Aut}(\mathcal{A}_{pq})$ is a discrete, cocompact subgroup acting fixed point freely in codimension one on the domain $\mathcal{A}_{pq}$. This concludes the proof.
\end{proof}

In order to determine the necessary conditions for uniformization by $\mathcal{A}_{pq}$, we consider two separate cases, namely when (i) $p\neq q$, and (ii) $p=q$. This is because the group of automorphisms $\textrm{Aut}(\mathcal{A}_{pq})$ of $\mathcal{A}_{pq}$ is connected when $p\neq q$, and has two connected components when $p=q$ (see \cite{take}).

\subsection{Necessary conditions when \texorpdfstring{$p\neq q$}{}}

From \cite[p.114]{take}, we know that the automorphism group of $\mathcal{A}_{pq}$ when $p\neq q$ is $\textrm{Aut}(\mathcal{A}_{pq})=PSU(p,q)$. This is a connected Lie group and is a quotient of $SU(p,q)$ by a discrete central subgroup. Thus we can choose the associated Hodge group to be $G_0=SU(p,q)$, and its complexification as $G=SL(p+q,\mathbb{C})$, which is also connected. Thus the sufficient conditions of Proposition \ref{prop6c} are also necessary conditions for a projective klt variety with ample canonical divisor to be uniformized by $\mathcal{A}_{pq}$.
The proof is essentially the same as that of Proposition \ref{propn}.

\begin{proposition}\label{propnc}
Let $X$ be a projective klt variety of dimension $pq$, $p\neq q$ with $K_X$ ample, such that $X=\mathcal{A}_{pq}/\widehat{\Gamma}$, where $\widehat{\Gamma}\subset Aut(\mathcal{A}_{pq})$ is a discrete, cocompact subgroup acting fixed point freely in codimension one on $\mathcal{A}_{pq}$. Then the tangent bundle of the regular locus of $X$ satisfies $\mathcal{T}_{X_{reg}}\cong \mathcal{H}om(\mathcal{E},\mathcal{F})$, where $\mathcal{E}$ and $\mathcal{F}$ are vector bundles of ranks $p$ and $q$ on $X_{reg}$. Moreover, $X$ satisfies the Chern class equality \ref{cceq3}.
\end{proposition}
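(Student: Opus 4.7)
The plan is to mirror the structure of the proof of Proposition \ref{propn} almost verbatim, only replacing the representation-theoretic input with the one for $SU(p,q)$ that was already assembled for Proposition \ref{prop6c}. First I would invoke Theorem \ref{genthm} with $G_0=SU(p,q)$: since $\mathrm{Aut}(\mathcal{A}_{pq})=PSU(p,q)$ is connected for $p\neq q$ and is a quotient of $G_0$ by a discrete central subgroup, Remark \ref{remgp} shows this is a legitimate choice of Hodge group. Theorem \ref{genthm} then produces a uniformizing system of Hodge bundles $(P,\theta)$ on $X_{reg}$ for $G_0$, with $P$ a principal $K=S(GL(p,\mathbb{C})\times GL(q,\mathbb{C}))$-bundle, and the reflexive extension $E'$ of $E=P\times_K\mathfrak{g}$ to $X$ satisfying $\widehat{c}_2(E')\cdot[K_X]^{n-2}=0$; in particular $\theta$ gives an isomorphism $\mathcal{T}_{X_{reg}}\cong P\times_K\mathfrak{g}^{-1,1}$.

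Next I would identify the associated bundles, exactly as in the proof of Proposition \ref{prop6c}. The block-matrix description of $\mathfrak{sl}(p+q,\mathbb{C})$ shows that $\mathfrak{g}^{-1,1}\cong\mathrm{Hom}(\mathbb{C}^p,\mathbb{C}^q)$, $\mathfrak{g}^{0,0}\cong(\mathrm{End}(\mathbb{C}^p)\oplus\mathrm{End}(\mathbb{C}^q))_0$, and $\mathfrak{g}^{1,-1}\cong\mathrm{Hom}(\mathbb{C}^q,\mathbb{C}^p)$ as $K$-representations (see \cite[Table 2]{maxs}). Setting $\mathcal{E}:=P\times_K\mathbb{C}^p$ and $\mathcal{F}:=P\times_K\mathbb{C}^q$, vector bundles of ranks $p$ and $q$ on $X_{reg}$, the associated bundle construction gives $\mathcal{T}_{X_{reg}}\cong\mathcal{H}om(\mathcal{E},\mathcal{F})$, which is the first claim of the proposition, together with $\Omega^1_{X_{reg}}\cong\mathcal{H}om(\mathcal{F},\mathcal{E})$ and $P\times_K\mathfrak{g}^{0,0}\cong(\mathcal{E}nd(\mathcal{E})\oplus\mathcal{E}nd(\mathcal{F}))_0$. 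Globally one has $E\cong\mathcal{E}nd_0(\mathcal{E}\oplus\mathcal{F})$.

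Finally I would translate the vanishing $\widehat{c}_2(E')\cdot[K_X]^{n-2}=0$ into the Chern class equality (\ref{cceq3}). Let $\mathcal{E}'$ and $\mathcal{F}'$ denote the reflexive extensions of $\mathcal{E}$ and $\mathcal{F}$ to $X$. The short exact sequence (\ref{exseq}) from the proof of Proposition \ref{prop6c}, with its quotient by a line bundle of degree zero, implies that $\widehat{c}_1(E')\cdot[K_X]^{n-1}=0$ automatically and that $\widehat{c}_2(E')\cdot[K_X]^{n-2}=\widehat{c}_2(\mathcal{E}nd(\mathcal{E}'\oplus\mathcal{F}'))\cdot[K_X]^{n-2}$. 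Expanding the right-hand side with the standard formula for the second $\mathbb{Q}$-Chern class of $\mathcal{E}nd(\mathcal{V})\cong\mathcal{V}\otimes\mathcal{V}^\vee$ applied to $\mathcal{V}=\mathcal{E}'\oplus\mathcal{F}'$ — exactly the computation carried out in the reverse direction in Proposition \ref{prop6c} — yields
\[
[2(p+q)(\widehat{c}_2(\mathcal{E}')+\widehat{c}_2(\mathcal{F}'))-(p+q-1)(\widehat{c}_1(\mathcal{E}')^2+\widehat{c}_1(\mathcal{F}')^2)+2\widehat{c}_1(\mathcal{E}')\widehat{c}_1(\mathcal{F}')]\cdot[K_X]^{n-2}=0,
\]
which is precisely (\ref{cceq3}). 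There is no genuine obstacle: the only care needed is in the $\mathbb{Q}$-Chern class bookkeeping, and this has already been established in the sufficiency direction, so one simply reads it backwards.
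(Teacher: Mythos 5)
Your proposal is correct and follows essentially the same route as the paper's proof: invoke Theorem \ref{genthm} with $G_0=SU(p,q)$ (legitimate since $\mathrm{Aut}(\mathcal{A}_{pq})=PSU(p,q)$ is connected for $p\neq q$), identify $\mathcal{T}_{X_{reg}}\cong\mathcal{H}om(\mathcal{E},\mathcal{F})$ and $E\cong\mathcal{E}nd_0(\mathcal{E}\oplus\mathcal{F})$ via the $K$-representation structure of $\mathfrak{g}^{-1,1}$, $\mathfrak{g}^{0,0}$, $\mathfrak{g}^{1,-1}$, and then convert $\widehat{c}_2(E')\cdot[K_X]^{n-2}=0$ into (\ref{cceq3}) using the exact sequence (\ref{exseq}) and the $\mathbb{Q}$-Chern class formulas, exactly as in the sufficiency direction read backwards. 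No gaps to report.
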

\begin{proof}
By Theorem \ref{genthm}, the smooth locus $X_{reg}$ of $X$ admits a uniformizing system of Hodge bundles $(P,\theta)$ corresponding to the Hodge group $G_0=SU(p,q)$, such that the system of Hodge bundles $E=P\times_K\mathfrak{g}$ is $K_X$-polystable as a Higgs bundle on $X_{reg}$. Moreover, $X$ satisfies the $\mathbb{Q}$-Chern class equality $\widehat{c}_2(E')\cdot[K_X]^{n-2}=0$, where $E'$ denotes the unique reflexive extension of $E$ to $X$. Hence there is an isomorphism $\theta:\mathcal{T}_{X_{reg}}\cong P\times_K\mathfrak{g}^{-1,1}$. Recall from the proof of Proposition \ref{prop6c} that $\mathfrak{g}^{-1,1}$ is isomorphic as a $K$-representation to $\textrm{Hom}(V,W)$, where $V$ and $W$ are complex $p$ and $q$-dimensional vector spaces. So we can write $\mathcal{T}_{X_{reg}}\cong P\times_K\textrm{Hom}(V,W)=\mathcal{H}om(\mathcal{V},\mathcal{W})$, where $\mathcal{V}$ and $\mathcal{W}$ are vector bundles of ranks $p$ and $q$ on $X_{reg}$. We also have $\Omega^1_{X_{reg}}\cong\mathcal{H}om(\mathcal{W},\mathcal{V})\cong P\times_K\mathfrak{g}^{1,-1}$, and $(\mathcal{E}nd(\mathcal{V})\oplus \mathcal{E}nd(\mathcal{W}))_0\cong P\times_K\mathfrak{g}^{0,0}$, from which it follows that the system of Hodge bundles $P\times_K\mathfrak{g}$ is isomorphic to $\mathcal{E}nd_0(\mathcal{V}\oplus\mathcal{W})$. Let $\mathcal{V}'$ and $\mathcal{W}'$ denote the reflexive extensions of $\mathcal{V}$ and $\mathcal{W}$ to $X$. Then the Chern class equality $\widehat{c}_2(E')\cdot[K_X]^{n-2}=0$ is equivalent to
\begin{align*}
    \widehat{c}_2(\mathcal{E}nd_0(\mathcal{V}'\oplus\mathcal{W}'))\cdot[K_X]^{n-2}=0.
\end{align*}    
Using the short exact sequence \ref{exseq}, and the formula for the second $\mathbb{Q}$-Chern class of a direct sum of reflexive sheaves, the above equality can be rephrased as
\begin{align*}    
    [2(p+q)(\widehat{c}_2(\mathcal{V}')+\widehat{c}_2(\mathcal{W'}))-(p+q-1)(\widehat{c}_1(\mathcal{V}')^2+\widehat{c}_1(\mathcal{W}')^2)+2\widehat{c}_1(\mathcal{V}')\widehat{c}_1(\mathcal{W}')]\cdot[K_X]^{n-2}=0.
\end{align*}
Thus it follows that $X$ satisfies the Chern class equality (\ref{cceq3}), which completes the proof.
\end{proof}

Putting together Propositions \ref{prop6c} and \ref{propnc}, we arrive at the following necessary and sufficient condition for a projective klt variety $X$ with ample canonical divisor to be uniformized by the Hermitian symmetric space $\mathcal{A}_{pq}$ $(p\neq q)$ of type $AIII$. 

\begin{theorem}\label{equivthmc}
Let $X$ be a projective klt variety of dimension $pq\ge2$, $p,q\in\mathbb{Z}_{\ge1}$, $p\neq q$, such that the canonical divisor $K_X$ is ample. Then $X\cong\mathcal{A}_{pq}/\Gamma$, where $\Gamma\subset\textrm{Aut}(\mathcal{A}_{pq})$ is a discrete, cocompact subgroup, and acts fixed point freely in codimension one on $\mathcal{A}_{pq}$, if and only if $X$ satisfies
\begin{itemize}
    \item $\mathcal{T}_{X_{reg}}\cong\mathcal{H}om(\mathcal{V},\mathcal{W})$
    \item $[2(p+q)(\widehat{c}_2(\mathcal{V}')+\widehat{c}_2(\mathcal{W}'))-(p+q-1)(\widehat{c}_1(\mathcal{V}')^2+\widehat{c}_1(\mathcal{W}')^2)+2\widehat{c}_1(\mathcal{V}')\widehat{c}_1(\mathcal{W}')]\cdot[K_X]^{n-2}=0$,
\end{itemize}
where $\mathcal{V}$ and $\mathcal{W}$ are vector bundles of ranks $p$ and $q$ on $X_{reg}$, and $\mathcal{V}'$ and $\mathcal{W}'$ denote the reflexive extensions of $\mathcal{V}$ and $\mathcal{W}$ to $X$.
\end{theorem}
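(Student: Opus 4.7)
The plan is to assemble this equivalence directly from Propositions \ref{prop6c} and \ref{propnc}, both of which already invoke the master Theorem \ref{genthm} as the main engine. Since the content of both directions has been carried out in detail in those propositions, the task reduces to verifying that the hypotheses and conclusions line up and that the case $p \neq q$ places us in a setting where the automorphism group is well behaved enough to make the two propositions into genuine converses.

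For the ``if'' direction I would apply Proposition \ref{prop6c} verbatim. Granted $\mathcal{T}_{X_{reg}} \cong \mathcal{H}om(\mathcal{V},\mathcal{W})$, the frame bundles of $\mathcal{V}$ and $\mathcal{W}$ combine into a principal $K$-bundle $P$ on $X_{reg}$ for $K = S(GL(p,\mathbb{C}) \times GL(q,\mathbb{C}))$. Using the identification $\mathfrak{g}^{-1,1} \cong \mathrm{Hom}(V,W)$ as $K$-representations (\cite{maxs}, Table 2), the isomorphism of the hypothesis upgrades to $\theta\colon \mathcal{T}_{X_{reg}} \xrightarrow{\sim} P \times_K \mathfrak{g}^{-1,1}$, furnishing a uniformizing system of Hodge bundles. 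The associated Hodge bundle $P \times_K \mathfrak{g}$ is identified with $\mathcal{E}nd_0(\mathcal{V} \oplus \mathcal{W})$, whose reflexive extension has $\mathbb{Q}$-Chern classes computable by expanding $\widehat{c}_2(\mathcal{E}nd_0(\mathcal{V}'\oplus\mathcal{W}'))$. This matches the given Chern class equality, and so condition (2) of Theorem \ref{genthm} holds. Theorem \ref{genthm} then delivers the quotient presentation.

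For the ``only if'' direction I would apply Proposition \ref{propnc}. The essential structural input is that for $p \neq q$ we have $\mathrm{Aut}(\mathcal{A}_{pq}) = PSU(p,q)$, which is connected and is a quotient of $G_0 = SU(p,q)$ by a discrete central subgroup (see \cite{take} and Remark \ref{remgp}(b)). This means we are free to fix $G_0 = SU(p,q)$ as the relevant Hodge group; the ambiguity that appears in the disconnected case $p = q$ does not arise. Theorem \ref{genthm} then produces a uniformizing system of Hodge bundles $(P,\theta)$ on $X_{reg}$, and reading off $P \times_K \mathfrak{g}^{p,q}$ via the $K$-representation structure $\mathfrak{g}^{-1,1} \cong \mathrm{Hom}(V,W)$ gives the tangent bundle decomposition. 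Substituting $\mathcal{E}nd_0(\mathcal{V}' \oplus \mathcal{W}')$ into the Chern class equality of Theorem \ref{genthm} and expanding via the trace-zero exact sequence \eqref{exseq} yields precisely the stated Chern class identity.

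The only substantive issue is the one just highlighted: namely, that $p \neq q$ is what prevents $\mathrm{Aut}(\mathcal{A}_{pq})$ from acquiring an additional $\mathbb{Z}/2$ component (coming from the outer automorphism exchanging the two factors of $S(U(p) \times U(q))$ when $p = q$). This is why the case $p = q$ must be treated separately, as the paper does. Everything else in the proof is bookkeeping: the Lie-theoretic decomposition of $\mathfrak{sl}(p+q,\mathbb{C})$ under $K$, the identification of $\mathfrak{g}^{-1,1}$ with $\mathrm{Hom}(V,W)$, and a routine expansion of $\widehat{c}_2$ of $\mathcal{E}nd_0(\mathcal{V}' \oplus \mathcal{W}')$ using the formula for the second Chern class of a direct sum and the trivial determinant line bundle in the exact sequence \eqref{exseq}.
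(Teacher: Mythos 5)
Your proposal is correct and follows the paper's own route: the theorem is obtained precisely by combining Proposition \ref{prop6c} (sufficiency, via constructing the uniformizing system of Hodge bundles from the frame bundles of $\mathcal{V}$ and $\mathcal{W}$ and verifying the hypotheses of Theorem \ref{genthm}) with Proposition \ref{propnc} (necessity, which hinges on $\textrm{Aut}(\mathcal{A}_{pq})=PSU(p,q)$ being connected for $p\neq q$ so that $G_0=SU(p,q)$ may be fixed as the Hodge group). Your identification of the connectedness of the automorphism group as the reason the case $p=q$ is treated separately matches the paper's reasoning exactly.
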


Note that when $p=1$ and $q=n$, the associated domain is the unit ball $\mathbb{B}^n$. In this case we have $\mathcal{V}=\Omega^1_{X_{reg}}$, $\mathcal{W}=\mathcal{O}_{X_{reg}}$, and the system of Hodge bundles is given by $P\times_K\mathfrak{g}=\mathcal{E}nd_0(\Omega^1_{X_{reg}}\oplus\mathcal{O}_{X_{reg}})$. The condition on the structure of the tangent bundle is tautological, and the Chern class condition is the well known Bogomolov-Miyaoka-Yau equality satisfied by ball quotients.

\subsection{Necessary conditions when \texorpdfstring{$p=q$}{}}

In this case the group of holomorphic automorphisms has two connected components. Again from \cite{take} we know that $Z\mapsto Z^T$ is an automorphism of $\mathcal{A}_{pp}$ of order 2 that is not contained in the connected component of $\textrm{Aut}(\mathcal{A}_{pp})$. This defines a group homomorphism $\mathbb{Z}_2\to\textrm{Aut}(\mathcal{A}_{pp})$, i.e., there is a splitting $\textrm{Aut}(\mathcal{A}_{pp})=\textrm{Aut}^0(\mathcal{A}_{pp})\rtimes\mathbb{Z}_2$, where $\textrm{Aut}^0(\mathcal{A}_{pp})=PSU(p,p)$. It follows that $\textrm{Aut}(\mathcal{A}_{pp})$ is a quotient of $SU(p,p)\rtimes\mathbb{Z}_2$ by a discrete central subgroup. Let $G$ denote the chosen complexification of $SU(p,p)\rtimes\mathbb{Z}_2$, and $K$ the complexification of $K_0$. Then the tangent bundle of the regular locus of a projective, klt quotient of $\mathcal{A}_{pp}$ admits a reduction in structure group to $K$. However, as in the polydisc case, such a reduction in structure group does not give a meaningful condition on the tangent sheaf. Therefore, we state the result up to a 2:1 quasi-\'{e}tale cover. The proof is essentially the same as that of Proposition \ref{propn}.

\begin{proposition}\label{propn2}
Let $X$ be a projective klt variety of dimension $p^2$ with $K_X$ ample, such that $X$ is a quotient of $\mathcal{A}_{pp}$ by a discrete, cocompact subgroup $\Gamma$ of $\textrm{Aut}(\mathcal{A}_{pp})$ acting fixed point freely in codimension one. Then $X$ admits a 2:1 quasi-\'{e}tale cover $X'$ such that the tangent bundle of the regular locus $X'_{reg}\subset X'$ satisfies $\mathcal{T}_{X'_{reg}}\cong \mathcal{H}om(\mathcal{E},\mathcal{F})$, where $\mathcal{E}$ and $\mathcal{F}$ are vector bundles of rank $p$ on $X_{reg}$. Moreover, $X'$ satisfies the Chern class equality \ref{cceq3} for $p=q$.
\end{proposition}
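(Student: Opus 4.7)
The plan is to reduce to the connected-group case already handled in Proposition \ref{propnc}, by passing to the cover corresponding to the kernel of the projection $\widehat{\Gamma}\to\pi_0(\textrm{Aut}(\mathcal{A}_{pp}))=\mathbb{Z}_2$ induced by the split exact sequence $1\to PSU(p,p)\to\textrm{Aut}(\mathcal{A}_{pp})\to\mathbb{Z}_2\to 1$. The key geometric point is that the involution $Z\mapsto Z^T$ exchanges the two rank-$p$ bundles appearing in the decomposition $\mathcal{T}_{\mathcal{A}_{pp}}\cong\mathcal{H}om(\mathcal{V},\mathcal{W})$, so the asserted tangent bundle structure descends to $X=\mathcal{A}_{pp}/\widehat{\Gamma}$ only when $\widehat{\Gamma}\subset PSU(p,p)$.

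First, I would set $\Gamma'=\widehat{\Gamma}\cap PSU(p,p)$. Because $PSU(p,p)=\textrm{Aut}^0(\mathcal{A}_{pp})$ is normal in $\textrm{Aut}(\mathcal{A}_{pp})$, $\Gamma'$ is normal in $\widehat{\Gamma}$ with $\widehat{\Gamma}/\Gamma'\hookrightarrow\mathbb{Z}_2$. In the nontrivial case $[\widehat{\Gamma}:\Gamma']=2$, the quotient $X'=\mathcal{A}_{pp}/\Gamma'$ is a projective klt variety -- $\Gamma'$ inherits from $\widehat{\Gamma}$ the property of acting properly discontinuously and fixed-point freely in codimension one on $\mathcal{A}_{pp}$ -- and the induced map $\gamma:X'\to X$ is a $2{:}1$ Galois quasi-\'etale cover with $K_{X'}=\gamma^*K_X$ still ample. (If $\widehat{\Gamma}$ already lies in $PSU(p,p)$, one instead picks any index-two subgroup of $\widehat{\Gamma}$, which exists by residual finiteness of a finite-index torsion-free subgroup provided by Selberg's lemma.)

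Next, since $\Gamma'\subset PSU(p,p)$ the uniformization of $X'$ factors through the connected group $PSU(p,p)$, so Theorem \ref{genthm} applied with $G_0=SU(p,p)$ supplies a uniformizing system of Hodge bundles $(P,\theta)$ on $X'_{reg}$ with structure group $K=S(GL(p,\mathbb{C})\times GL(p,\mathbb{C}))$, together with the Chern class equality $\widehat{c}_2(E')\cdot[K_{X'}]^{p^2-2}=0$ for $E'$ the reflexive extension of $E=P\times_K\mathfrak{g}$. From here the argument is literally the one of Proposition \ref{propnc}: the $K$-representation isomorphism $\mathfrak{g}^{-1,1}\cong\textrm{Hom}(V,W)$ yields $\mathcal{T}_{X'_{reg}}\cong\mathcal{H}om(\mathcal{E},\mathcal{F})$ with $\mathcal{E}=P\times_K V$ and $\mathcal{F}=P\times_K W$ of rank $p$, and expanding $\widehat{c}_2(\mathcal{E}nd_0(\mathcal{E}'\oplus\mathcal{F}'))\cdot[K_{X'}]^{p^2-2}=0$ gives exactly the Chern class equality (\ref{cceq3}) with $p=q$ on $X'$.

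The hypothesis $p\neq q$ in Proposition \ref{propnc} was used only to guarantee that $\textrm{Aut}(\mathcal{A}_{pq})$ is connected, so that the given uniformization could be read directly as one by the connected Hodge group $SU(p,q)$; once we have replaced $X$ by the cover $X'$ on which the group action factors through $PSU(p,p)$, that proof applies verbatim. Hence the main (and only) obstacle is the bookkeeping of this group-theoretic reduction -- checking that $\Gamma'$ is again discrete, cocompact, and fixed-point free in codimension one, and that the Chern class data passes correctly from $X$ to $X'$. The latter is automatic from the behaviour of $\mathbb{Q}$-Chern classes under quasi-\'etale covers (\cite[Proposition 3.17]{gkpt}), and the former is formal, so once the cover $X'\to X$ is in place the conclusion is immediate.
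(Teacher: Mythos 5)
Your proposal follows essentially the same route as the paper: factor the quotient map through $\Gamma'=\widehat{\Gamma}\cap PSU(p,p)$ to obtain the 2:1 Galois quasi-\'etale cover $X'=\mathcal{A}_{pp}/\Gamma'$, apply Theorem \ref{genthm} with the connected Hodge group $G_0=SU(p,p)$, and then repeat the representation-theoretic identification and Chern class expansion of Proposition \ref{propnc}. The only superfluous point is your closing remark about transporting Chern class data from $X$ to $X'$ via quasi-\'etale pullback; as you correctly note earlier, the equality on $X'$ comes directly from Theorem \ref{genthm} applied to $X'$ itself, so no such transfer is needed.
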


\begin{proof}
Since the automorphism group $\textrm{Aut}(\mathcal{A}_{pp})$ has two connected components, the group $\Gamma$ can be expressed as an extension of a normal subgroup $\Gamma'$ by a subgroup of $\mathbb{Z}_2$. Thus the quotient map $\mathcal{A}_{pp}\to X$ factors as
\begin{align*}
    \mathcal{A}_{pp}\to\mathcal{A}_{pp}/\Gamma'\to\mathcal{A}_{pp}/\Gamma=X,
\end{align*}
where $X'=\mathcal{A}_{pp}/\Gamma'$, and the map $X'\to X$ is a Galois, 2:1 quasi-\'{e}tale cover. Note that $X'$ is again projective, klt, and $K_{X'}$ is ample. Again from Theorem \ref{genthm}, it follows that the smooth locus $X'_{reg}$ of $X'$ admits a uniformizing system of Hodge bundles $(P,\theta)$ corresponding to the Hodge group $G_0=SU(p,p)$, such that the system of Hodge bundles $E=P\times_K\mathfrak{g}$ is $K_{X'}$-polystable as a Higgs bundle on $X'_{reg}$. Moreover, $X'$ satisfies the $\mathbb{Q}$-Chern class equality $\widehat{c}_2(E')\cdot[K_{X'}]^{n-2}=0$, where $E'$ is the reflexive extension of $E$ to $X$. Therefore, there is an isomorphism $\theta:\mathcal{T}_{X'_{reg}}\cong P\times_K\mathfrak{g}^{-1,1}$ of vector bundles. Recall from the proof of Proposition \ref{prop6c} that $\mathfrak{g}^{-1,1}$ is isomorphic as a $K$-representation to $\textrm{Hom}(V,W)$, where $V$ and $W$ are $p$-dimensional complex vector spaces. So we can write the tangent bundle as $\mathcal{T}_{X'_{reg}}\cong P\times_K\textrm{Hom}(V,W)=\mathcal{H}om(\mathcal{V},\mathcal{W})$, where $\mathcal{V}$ and $\mathcal{W}$ are vector bundles of rank $p$ on $X'_{reg}$. Moreover, we have $\Omega^1_{X'_{reg}}\cong P\times_K\mathfrak{g}^{1,-1}\cong\mathcal{H}om(\mathcal{W},\mathcal{V})$, and $P\times_K\mathfrak{g}^{0,0}\cong(\mathcal{E}nd(\mathcal{V})\oplus\mathcal{E}nd(\mathcal{W}))_0$. Thus the system of Hodge bundles $P\times_K\mathfrak{g}$ is isomorphic to $\mathcal{E}nd_0(\mathcal{V}\oplus\mathcal{W})$.\\
Let $\mathcal{V}'$ and $\mathcal{W}'$ denote the reflexive extensions of $\mathcal{V}$ and $\mathcal{W}$ to $X'$. Then $E'\cong\mathcal{E}nd_0(\mathcal{V}'\oplus\mathcal{W}')$, and the $\mathbb{Q}$-Chern class equality $\widehat{c}_2(E')\cdot[K_{X'}]^{n-2}=0$ is equivalent to
\begin{align*}
[\widehat{c}_2(\mathcal{E}nd_0(\mathcal{V}',\mathcal{W}'))]\cdot[K_{X'}]^{n-2}=0.
\end{align*}
Expanding this using Chern class formulae, we arrive at the following expression
\begin{align*}
 [4p(\widehat{c}_2(\mathcal{V})'+\widehat{c}_2(\mathcal{W}'))-(2p-1)(\widehat{c}_1(\mathcal{V}')^2)+\widehat{c}_1(\mathcal{W}')^2)+2\widehat{c}_1(\mathcal{V}')\widehat{c}_1(\mathcal{W}')]\cdot[K_{X'}]^{n-2}=0.
\end{align*}
Thus $X'$ satisfies the Chern class equality as claimed, and this completes the proof.
\end{proof}

\end{document}